\newtheorem{theorem}{Theorem}[section]
\newtheorem{corollary}[theorem]{Corollary}
\newtheorem{definition}[theorem]{Definition}
\newtheorem{example}[theorem]{Example}
\newtheorem{lemma}[theorem]{Lemma}
\newtheorem{proposition}[theorem]{Proposition}
\theoremstyle{remark}
\newtheorem{remark}[theorem]{Remark}
\numberwithin{equation}{section}
\begin{document}
\title{A STUDY OF THREE-DIMENSIONAL PARACONTACT $(\tilde{\kappa},\tilde{\mu},%
\tilde{\nu})$-SPACES}
\author{I. KUPELI ERKEN}
\address{Faculty of Natural Sciences, Architecture and Engineering,
Department of Mathematics, Bursa Technical University, Bursa, TURKEY}
\email{irem.erken@btu.edu.tr}
\author{C. MURATHAN}
\address{Art and Science Faculty, Department of Mathematics, Uludag
University, 16059 Bursa, TURKEY}
\email{cengiz@uludag.edu.tr}
\subjclass[2010]{Primary 53B30, 53C15, 53C25; Secondary 53D10}
\keywords{Paracontact metric manifold, para-Sasakian, contact metric
manifold, $(\kappa ,\mu )$-manifold.\\
This study is a part of PhD thesis of the first author.}

\begin{abstract}
This paper is a study of three-dimensional paracontact metric $(\tilde{\kappa%
},\tilde{\mu},\tilde{\nu})$-manifolds. Three dimensional paracontact metric
manifolds whose Reeb vector field $\xi $ is harmonic are characterized. We
focus on some curvature properties by considering the class of paracontact
metric $(\tilde{\kappa},\tilde{\mu},\tilde{\nu})$-manifolds under a
condition which is given at Definition 3.1. We study properties of such
manifolds according to the cases $\tilde{\kappa}>-1,$ $\tilde{\kappa}=-1,~%
\tilde{\kappa}<-1$ and construct new examples of such manifolds for each
case. We also show that the existence of paracontact metric $(-1,\tilde{\mu}%
\neq 0,\tilde{\nu}\neq 0)$ spaces with dimension greater than $3$ such that $%
\tilde{h}^{2}=0$ but $\tilde{h}\neq 0.$
\end{abstract}

\maketitle

\section{I\textbf{ntroduction}}

\label{introduction}

Paracontact manifolds are smooth manifolds of dimension $2n+1$ endowed with
a $1$-form $\eta $, a vector field $\xi $ and a field of endomorphisms of
tangent spaces $\tilde{\varphi}$ such that $\eta (\xi )=1$, $\tilde{\varphi}%
^{2}=I-\eta \otimes \xi $ and $\tilde{\varphi}$ induces an almost
paracomplex structure on the codimension $1$ distribution defined by the
kernel of $\eta $ (see $\S $ \ref{preliminaries} for more details). In
addition, if the manifold is endowed with a pseudo-Riemannian metric $\tilde{%
g}$ of signature $(n+1,n)$ satisfying 
\begin{equation*}
\tilde{g}(\tilde{\varphi}X,\tilde{\varphi}Y)=-\tilde{g}(X,Y)+\eta (X)\eta
(Y),\ \ \ d\eta (X,Y)=\tilde{g}(X,\tilde{\varphi}Y),
\end{equation*}%
$(M,\eta )$ becomes a contact manifold and $(\tilde{\varphi},\xi ,\eta ,%
\tilde{g})$ is said to be a paracontact metric structure on $M$. \ The study
of paracontact geometry was started by Kaneyuki and Williams in \cite%
{kaneyuki1} and then it was continued by many other authors. A systematic
study of paracontact metric manifolds, and some subclasses like
para-Sasakian manifolds, was carried out in a striking paper of Zamkovoy 
\cite{Za}. The importance of paracontact geometry, and in particular of
para-Sasakian geometry, has been pointed out especially in the last years by
several papers highlighting the interplays with the theory of para-K\"{a}%
hler manifolds and its role in pseudo-Riemannian geometry and mathematical
physics (cf. e.g. \cite{alekseevski1}, \cite{alekseevski2}, \cite{CMEM}, 
\cite{cortes1}, \cite{cortes2}).

In \cite{CDFK}, Closset, Dumitrescu, Festuccia, and Komargodski constructed
supersymmetric field theories on Riemannian three manifolds, considering
three dimensional theories with $\mathcal{N}$ $=2$ supersymmetry. They
proved that the supersymmetric field theory on three manifold $M$ possesses
a single supercharge if and only if $M$ admits an almost contact metric
structure that satisfies certain integrability conditions. Recently, Willett 
\cite{Willett} studied the localization of three dimensional $\mathcal{N}$ $%
=2$ supersymmetric theories on compact manifolds.

As known every orientable Riemannian three-manifold admits a
metric-compatible almost contact structure and three dimensional unit sphere 
$\emph{S}^{3}$ has a Sasakian structure. In \cite{MK} Markellos and
Tsichlias constructed new $(\kappa ,\mu )$ contact metric structures
(non-Sasakian) on the unit sphere $\emph{S}^{3}$.

On the other hand, in \cite{MOTE} it was proved (cf. Theorem \ref{motivation}
below) that any (non-Sasakian) contact $(\kappa ,\mu )$-space carries a
canonical paracontact metric structure $(\tilde{\varphi},\xi ,\eta ,\tilde{g}%
)$ whose Levi-Civita connection satisfies a condition formally similar to
contact case 
\begin{equation}
\tilde{R}(X,Y)\xi =\tilde{\kappa}\left( \eta \left( Y\right) X-\eta \left(
X\right) Y\right) +\tilde{\mu}(\eta \left( Y\right) \tilde{h}X-\eta \left(
X\right) \tilde{h}Y),  \label{paranullity}
\end{equation}%
where $2\tilde{h}:={\mathcal{L}}_{\xi }\tilde{\varphi}$ and, in this case, $%
\tilde{\kappa}=(1-\mu /2)^{2}+\kappa -2$, $\tilde{\mu}=2$. \ By \cite{MOTE}
and \cite{MK}, $\emph{S}^{3}$ will have paracontact metric structure.

A $(2n+1)$-dimensional paracontact metric manifold $(M,\tilde{\varphi},\xi
,\eta ,\tilde{g})$ whose curvature tensor satisfies (\ref{paranullity}), is
called \emph{paracontact $(\tilde{\kappa},\tilde{\mu})$-manifold}. The class
of paracontact $(\tilde{\kappa},\tilde{\mu})$-manifolds is very large. It
contains para-Sasakian manifolds, as well as those paracontact metric
manifolds satisfying $\tilde{R}(X,Y)\xi =0$ for all $X,Y\in \Gamma (TM)$
(recently studied in \cite{ZATZA}). But, unlike in the contact Riemannian
case, a paracontact $(\tilde{\kappa},\tilde{\mu})$-manifold such that $%
\tilde{\kappa}=-1$ in general is not para-Sasakian. In fact, there are
paracontact $(\tilde{\kappa},\tilde{\mu})$-manifolds such that $\tilde{h}%
^{2}=0$ (which is equivalent to take $\tilde{\kappa}=-1$) but with $\tilde{h}%
\neq 0$. Another important difference with the contact Riemannian case, due
to the non-positive definiteness of the metric, is that while for contact
metric $(\kappa ,\mu )$-spaces the constant $\kappa $ can not be greater
than $1$, here we have no restriction for the constants $\tilde{\kappa}$ and 
$\tilde{\mu}$. It should be also remarked that contact metric $(\kappa ,\mu
,\nu )$-spaces of dimension greater than $3$ is either a Sasakian manifold
or a $(\kappa ,\mu )$-contact metric manifold. But the authors provided an
example of paracontact metric $(\tilde{\kappa},\tilde{\mu},\tilde{\upsilon})$%
-manifold such that $\tilde{\kappa}=-1,($and $\tilde{h}^{2}=0)$ but with $%
\tilde{\mu}\neq 0,\tilde{\nu}\neq 0,$ $\tilde{h}\neq 0$ , and $n>1.$

Cappelletti Montano et al.\cite{CMEM} showed in that there is a kind of
duality between those manifolds and contact metric $(\kappa ,\mu )$-spaces
and also proved that $\xi $ is a Ricci eigenvector of paracontact $(\tilde{%
\kappa},\tilde{\mu})$-manifolds.

In \cite{CAPER}, G.Calvaruso and D. Perrone proved that $\xi $ is \textit{%
harmonic} if and only if $\xi $ is an eigenvector of the Ricci operator for
contact semi-Riemannian manifolds.

It turns out that there exists a motivation to study harmonic maps in
contact semi-Riemannian and paracontact geometry.

Let $(M,g)$ be smooth, oriented, connected pseudo-Riemannian manifold and $%
(TM,g^{S})$ its tangent bundle endowed with the Sasaki metric (also referred
to as Kaluza-Klein metric in Mathematical Physics) $g^{S}$. By definition,
the \textit{energy} of a smooth vector field $V$ on $M$ is the energy
corresponding $V:(M,g)\rightarrow (TM,g^{s}).$ When $M$ is compact, the 
\textit{energy} of $V$ is determined by%
\begin{equation*}
E(V)=\frac{1}{2}\int_{M}(tr_{g}V^{\ast }g^{s})dv=\frac{n}{2}vol(M,g)+\frac{1%
}{2}\int_{M}\left\Vert \nabla V\right\Vert ^{2}dv.
\end{equation*}
The non-compact case, one can take into account over relatively compact
domains. It can be shown that $V:(M,g)\rightarrow (TM,g^{s})$ is harmonic
map if and only if%
\begin{equation}
tr\left[ R(\nabla .V,V).\right] =0,\ \nabla ^{\ast }\nabla V=0,
\label{NAMBLASTAR2}
\end{equation}%
where 
\begin{equation}
\nabla ^{\ast }\nabla V=\sum_{i}\varepsilon _{i}(\nabla _{e_{i}}\nabla
_{e_{i}}V-\nabla _{\nabla _{e_{i}}e_{i}}V)  \label{NAMBLASTAR}
\end{equation}%
is the rough Laplacian with respect to a pseudo-orthonormal local frame $%
\left\{ e_{1},...,e_{n}\right\} $ on $(M,g)$ with $g(e_{i},e_{i})=%
\varepsilon _{i}=\pm 1$ for all indices $i=1,...,n.$

If ($M,g)$ is a compact Riemannian manifold, only parallel vector fields
define harmonic maps.

Next, for any real constant $\rho \neq 0$, let $\chi ^{\rho }(M)=\left\{
W\in \chi (M):\left\Vert W\right\Vert ^{2}=\rho \right\} .$We consider
vector fields $V\in $ $\chi ^{\rho }(M)$ which are critical points for the
energy functional $E\mid _{\chi ^{\rho }(M)}$, restricted to vector fields
of the same length. The Euler-Lagrange equations of this variational
condition yield that $V$ is a harmonic vector field if and only if%
\begin{equation}
\nabla ^{\ast }\nabla V\text{ is collinear to }V.  \label{NAMBLASTAR4}
\end{equation}

This characterization is well known in the Riemannian case ([3, 23, 25]).
Using same arguments in pseudo-Riemannian case, G. Calvaruso \cite%
{calvaruso1} proved that same result is still valid for vector fields of
constant length, if it is not lightlike.

Let $T_{1}M$ denote the unit tangent sphere bundle over $M$, and again by $%
g^{S}$ the metric induced on $T_{1}M$ by the Sasaki metric of $TM.$ Then, it
is shown that in \cite{ACP}, the map on $V:(M,g)\rightarrow (T_{1}M,g^{s})$
is harmonic if $V$ is a harmonic vector field and the additonial condition 
\begin{equation}
tr[R(\nabla .V,V).]=0  \label{TM}
\end{equation}%
is satisfied. G. Calvaruso \cite{calvaruso1} also investigated harmonicity
properties for left-invariant vector fields on three-dimensional Lorentzian
Lie groups, obtaining several classification results and new examples of
critical points of energy functionals. In \cite{calvaruso2}, he studied
harmonicity properties of vector fields on four-dimensional
pseudo-Riemannian generalized symmetric spaces. Moreover, he gave a complete
classification of three-dimensional homogeneous paracontact metric manifolds
in \cite{calvar}. Recently, G.Calvaruso and D.Perrone \cite{CAPERR} proved
that all three-dimensional homogeneous paracontact metric manifolds are 
\emph{H}\textit{-paracontact}, that is, paracontact metric manifolds whose
characteristic vector field $\xi $ is harmonic.

In this paper, we study harmonicity of the characteristic vector field of
three-dimensional paracontact metric manifolds and give a characterization
of $(\tilde{\kappa},\tilde{\mu},\tilde{\nu})$\textit{-}manifolds\textit{. }

\textbf{\textit{Overview:}} Here is the plan of the paper: $\S $ \ref%
{preliminaries} is devoted to preliminaries. In $\S $ \ref{third}, we study
the common properties of paracontact metric $(\tilde{\kappa},\tilde{\mu},%
\tilde{\nu})$-manifolds (see $\S $ \ref{third} for definition) for the cases 
$\tilde{\kappa}<-1$, $\tilde{\kappa}=-1$, $\tilde{\kappa}>-1.$ Beside the
other results, we prove for instance that while the values of $\tilde{\kappa}%
,$ $\tilde{\mu}$ and $\tilde{\nu}$ change, the paracontact metric $(\tilde{%
\kappa},\tilde{\mu},\tilde{\upsilon})$-manifolds remain unchanged under $%
\mathcal{D}$-homothetic deformations. Moreover we prove that in $\dim $ $M$ $%
>3$, paracontact metric $(\tilde{\kappa}\neq -1,\tilde{\mu},\tilde{\nu})$%
-manifolds must be paracontact $(\tilde{\kappa},\tilde{\mu})$-manifolds.

We reserve $\S $ \ref{fourth}, for the following two main theorems of the
paper:

\begin{theorem}
\label{DESTAR}Let $(M,\tilde{\varphi},\xi ,\eta ,\tilde{g})$ be a $3$%
-dimensional paracontact metric manifold. $\xi $ is a harmonic vector field
if and only if the characteristic vector field $\xi $ is an eigenvector of
the Ricci operator.
\end{theorem}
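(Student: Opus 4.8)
The plan is to reduce the statement to a Weitzenböck identity for the $1$-form $\eta$ and then to show that the only non-Ricci contribution to $\nabla^{\ast}\nabla\xi$ is automatically collinear with $\xi$. Since $\tilde{g}(\xi ,\xi )=\eta (\xi )=1$, the Reeb field $\xi$ has constant length, so by the harmonicity criterion \eqref{NAMBLASTAR4} the field $\xi$ is a harmonic vector field if and only if $\nabla^{\ast}\nabla\xi$ is collinear with $\xi$. Writing $Q$ for the Ricci operator, the Weitzenböck formula for the Hodge Laplacian $\Delta =d\delta +\delta d$ on $1$-forms gives $\nabla^{\ast}\nabla\eta =\Delta \eta -(Q\xi)^{\flat}$, so that, raising indices,
\begin{equation*}
\nabla^{\ast}\nabla\xi =(\Delta \eta)^{\sharp}-Q\xi .
\end{equation*}
Everything therefore comes down to controlling $(\Delta \eta)^{\sharp}$.

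First I would dispose of the exact part. Using the structural identity $\nabla_{X}\xi =-\tilde{\varphi}X+\tilde{\varphi}\tilde{h}X$, the divergence of $\xi$ equals $\operatorname{tr}(X\mapsto \nabla_{X}\xi)=-\operatorname{tr}\tilde{\varphi}+\operatorname{tr}(\tilde{\varphi}\tilde{h})$. Both traces vanish: $\tilde{\varphi}$ annihilates $\xi$ and restricts to a trace-free almost paracomplex structure on $\ker\eta$, while $\tilde{\varphi}\tilde{h}=-\tilde{h}\tilde{\varphi}$ forces $\operatorname{tr}(\tilde{\varphi}\tilde{h})=0$. Hence $\delta \eta =-\operatorname{div}\xi =0$, and consequently $\Delta \eta =\delta d\eta$.

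Next comes the core computation. Because $d\eta (X,Y)=\tilde{g}(X,\tilde{\varphi}Y)$ is the fundamental $2$-form, for any pseudo-orthonormal frame $\{e_{i}\}$ with $\tilde{g}(e_{i},e_{i})=\varepsilon_{i}$ one has $(\delta d\eta )(X)=-\sum_{i}\varepsilon_{i}\tilde{g}(e_{i},(\nabla_{e_{i}}\tilde{\varphi})X)$. Substituting Zamkovoy's expression $(\nabla_{X}\tilde{\varphi})Y=-\tilde{g}(X-\tilde{h}X,Y)\xi +\eta (Y)(X-\tilde{h}X)$ and using the identities $\sum_{i}\varepsilon_{i}\eta (e_{i})e_{i}=\xi$, $\tilde{h}\xi =0$, $\operatorname{tr}\tilde{h}=0$ together with $\sum_{i}\varepsilon_{i}^{2}=3$, the sum collapses to a constant multiple of $\eta$, namely $\delta d\eta =-2\eta$. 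In particular $(\Delta \eta)^{\sharp}=-2\xi$ is collinear with $\xi$.

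Combining the two steps yields $\nabla^{\ast}\nabla\xi =-2\xi -Q\xi$. Therefore $\nabla^{\ast}\nabla\xi$ is collinear with $\xi$ if and only if $Q\xi$ is collinear with $\xi$, that is, if and only if $\xi$ is an eigenvector of the Ricci operator, which establishes both implications at once. The step I expect to be the main obstacle is the core computation of $\delta d\eta$: one must fix the correct signs in the structural formula for $\nabla \tilde{\varphi}$ and carry the indefinite-signature factors $\varepsilon_{i}$ through the trace carefully, since a slip there would destroy the collinearity. As a cross-check I would also recompute $\nabla^{\ast}\nabla\xi$ directly in an $\tilde{h}$-eigenframe adapted to the three-dimensional structure, evaluating the rough Laplacian \eqref{NAMBLASTAR} term by term and matching it against the known expression for $Q\xi$ on a paracontact metric $3$-manifold.
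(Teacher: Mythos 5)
Your argument is correct, and it is a genuinely different route from the one in the paper. The paper proves Theorem \ref{DESTAR} by a case analysis over the three admissible Jordan canonical forms of $\tilde{h}$ (types $\mathfrak{h}_{1}$, $\mathfrak{h}_{2}$, $\mathfrak{h}_{3}$): in each case it builds an adapted (pseudo-)orthonormal frame, evaluates the rough Laplacian \eqref{NAMBLASTAR} term by term from the covariant-derivative formulas of Lemmas \ref{K>-1}, \ref{de2}, \ref{lem3}, and compares with the component formulas \eqref{irem7a}, \eqref{iremm7a}, \eqref{iremmm7a} for $\tilde{Q}\xi$, concluding that collinearity of $\tilde{\nabla}^{\ast }\tilde{\nabla}\xi$ with $\xi$ is equivalent to the vanishing of $\tilde{\sigma}=\tilde{S}(\xi ,\cdot )_{\ker \eta }$ in every case. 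Your Weitzenb\"{o}ck argument replaces all of this by a single invariant identity: since $\operatorname{div}\xi =\operatorname{tr}(-\tilde{\varphi}+\tilde{\varphi}\tilde{h})=0$ and, by Welyczko's formula \eqref{INTEGRABLE}, the codifferential of the fundamental $2$-form is a constant multiple of $\eta$, one gets $\tilde{\nabla}^{\ast }\tilde{\nabla}\xi =c\,\xi -\tilde{Q}\xi$ pointwise, and both implications follow at once, with no frames and no case distinction; it also isolates exactly where three-dimensionality enters (only through \eqref{INTEGRABLE}). Two remarks. First, a constant is off, though harmlessly: with the paper's normalization $d\eta (X,Y)=\tilde{g}(X,\tilde{\varphi}Y)$ one has $\eta ([\tilde{e},\tilde{\varphi}\tilde{e}])=2$ in the adapted frames (see \eqref{irem0}), so this tensor is one half of the exterior derivative that enters the Hodge Laplacian; your computation $\delta \Phi =-2\eta$ for $\Phi =\tilde{g}(\cdot ,\tilde{\varphi}\cdot )$ is correct, but the resulting identity is $\tilde{\nabla}^{\ast }\tilde{\nabla}\xi =-4\xi -\tilde{Q}\xi$ in the positive-Laplacian convention, equivalently $\tilde{Q}\xi +4\xi$ in the sign convention \eqref{NAMBLASTAR}. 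This is confirmed by the paper's own Case 1 computation, where $\tilde{\nabla}^{\ast }\tilde{\nabla}\xi -\tilde{Q}\xi =\bigl(2(\tilde{\lambda}^{2}+1)-2(\tilde{\lambda}^{2}-1)\bigr)\xi =4\xi$, and by Case 3; since only collinearity with $\xi$ matters, the factor does not affect the proof. Second, what the paper's longer route buys: the explicit frame data and the formulas for $\tilde{\nabla}_{\xi }\tilde{h}$ and the Ricci components in each canonical type are exactly what is reused in the proof of Theorem \ref{k mu vu} to produce the functions $\tilde{\kappa},\tilde{\mu},\tilde{\nu}$, so the case analysis is not wasted there, whereas your argument, taken alone, proves Theorem \ref{DESTAR} elegantly but yields none of that auxiliary structure.
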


\begin{theorem}
\label{k mu vu}\textit{Let }$(M,\tilde{\varphi},\xi ,\eta ,\tilde{g})$%
\textit{\ be a }$3$\textit{-dimensional paracontact metric manifold. If \
the characteristic vector field }$\xi $\textit{\ is harmonic vector field
then the paracontact metric }$(\tilde{\kappa},\tilde{\mu},\tilde{\nu})$%
\textit{-manifold always exists on every open and dense subset of }$M.$%
\textit{\ Conversely, if }$M$\textit{\ is a paracontact metric }$(\tilde{%
\kappa},\tilde{\mu},\tilde{\nu})$\textit{-manifold then the characteristic
vector field }$\xi $\textit{\ is harmonic vector field.}
\end{theorem}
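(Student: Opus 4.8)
The plan is to work locally on the open and dense subset where $\tilde{h}\neq 0$ (equivalently where the Reeb vector field is not Killing), adapt an orthonormal-type frame to $\tilde\varphi$ and $\xi$, and reduce both directions of the equivalence to the condition~(\ref{NAMBLASTAR2})--(\ref{NAMBLASTAR4}) characterizing harmonicity. Since $\xi$ has constant length $\tilde g(\xi,\xi)=\eta(\xi)=1$, by the Euler--Lagrange characterization~(\ref{NAMBLASTAR4}) together with the auxiliary condition~(\ref{TM}), $\xi$ is a harmonic \emph{map} precisely when $\nabla^{*}\nabla\xi$ is collinear to $\xi$ \emph{and} $\operatorname{tr}[\tilde R(\nabla_{\cdot}\xi,\xi)\,\cdot\,]=0$. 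Moreover, by Theorem~\ref{DESTAR}, the condition ``$\xi$ is a harmonic vector field'' is already equivalent to ``$\xi$ is a Ricci eigenvector,'' so I would use this to translate the collinearity of $\nabla^{*}\nabla\xi$ into the Ricci-eigenvector statement $\tilde Q\xi=\lambda\xi$ for some function $\lambda$.

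First I would compute, in dimension three, the rough Laplacian $\nabla^{*}\nabla\xi$ and the trace term $\operatorname{tr}[\tilde R(\nabla_{\cdot}\xi,\xi)\,\cdot\,]$ explicitly using the standard paracontact identities $\nabla_X\xi=-\tilde\varphi X+\tilde\varphi\tilde hX$ and the formula for $\tilde R(X,Y)\xi$. The key structural fact in dimension three is that the curvature is completely determined by the Ricci tensor, so $\tilde R(X,Y)\xi$ can be written entirely in terms of the metric, $\eta$, and the Ricci operator $\tilde Q$ acting on $\xi$. The goal is to show that harmonicity of $\xi$ as a map forces $\tilde R(X,Y)\xi$ to take exactly the prescribed $(\tilde\kappa,\tilde\mu,\tilde\nu)$-form; I expect the constants $\tilde\kappa,\tilde\mu$ and the \emph{function} $\tilde\nu$ to emerge as the coefficients appearing when one expresses $\tilde Q\xi$ and the derivatives of the relevant eigenvalues along $\xi$.

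For the forward direction (harmonic map $\Rightarrow$ $(\tilde\kappa,\tilde\mu,\tilde\nu)$-manifold on an open dense set), I would split $M$ into the open set $U$ where $\tilde h\neq0$ and its complement; on the closure of the interior of $\{\tilde h=0\}$ the manifold is para-Sasakian and the $(\tilde\kappa,\tilde\mu,\tilde\nu)$-condition holds trivially with the appropriate constants, so the real content is on $U$. There I diagonalize $\tilde h$ (using $\tilde h^{2}=(\tilde\kappa+1)\tilde\varphi^{2}$-type relations in the three-dimensional setting) to build an adapted frame $\{\xi,e,\tilde\varphi e\}$, express $\nabla^{*}\nabla\xi\parallel\xi$ and~(\ref{TM}) as scalar equations in this frame, and read off that $\tilde R(\cdot,\cdot)\xi$ has the claimed form. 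For the converse, I would simply assume the defining $(\tilde\kappa,\tilde\mu,\tilde\nu)$-relation from Definition~3.1, substitute it back into the two harmonicity conditions, and verify that both~(\ref{NAMBLASTAR4}) and~(\ref{TM}) hold identically; this direction should be a direct computation once the trace identities are in place.

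The main obstacle I anticipate is the careful bookkeeping on the set $U$ where $\tilde h\neq0$: because the metric is only pseudo-Riemannian and the eigenstructure of $\tilde h$ depends on the sign of $\tilde\kappa+1$ (the three regimes $\tilde\kappa>-1$, $\tilde\kappa=-1$, $\tilde\kappa<-1$ behave differently, with the $\tilde\kappa=-1$ case allowing a nilpotent non-diagonalizable $\tilde h$), the adapted frame and hence the explicit form of $\nabla^{*}\nabla\xi$ must be handled separately in each case. Controlling the derivative terms $\xi(\lambda)$ that produce the function $\tilde\nu$, and showing that the collinearity condition forces exactly one additional scalar relation rather than over-determining the system, is where the delicate part lies; the openness and density of the relevant subset then follows from the real-analytic or at least smooth dependence of $\tilde h$ and the fact that $\{\tilde h\neq0\}$ together with the interior of its complement is dense.
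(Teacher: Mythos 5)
Your reading of ``harmonic map'' as the conjunction of the critical-point equation (\ref{NAMBLASTAR4}) \emph{and} the trace condition (\ref{TM}) is where the argument breaks, and it breaks precisely in the converse direction. You claim that, assuming the defining relation (\ref{PARAKMU}), ``both (\ref{NAMBLASTAR4}) and (\ref{TM}) hold identically''; this is false. On a $3$-dimensional paracontact $(\tilde{\kappa},\tilde{\mu},\tilde{\nu})$-manifold one computes (this is exactly the content of Theorem \ref{TANGENT SPHERE} in the paper) that $tr[R(\nabla .\xi ,\xi ).]=\pm 2\tilde{\lambda}^{2}\tilde{\nu}\,\xi$ in the non-degenerate cases, so (\ref{TM}) holds if and only if $\tilde{\nu}=0$; and Example \ref{ex3} exhibits paracontact $(\tilde{\kappa},\tilde{\mu},\tilde{\nu})$-manifolds on which $\tilde{\nu}$ is a nonvanishing function, so for these $\xi :(M,\tilde{g})\rightarrow (T_{1}M,g^{S})$ is \emph{not} a harmonic map. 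The notion actually used in the theorem (and the only one used in the paper's proof, whose first step is ``since $\xi $ is a harmonic vector field, $\xi $ is an eigenvector of $\tilde{Q}$'') is harmonicity of $\xi $ as a \emph{vector field}, i.e.\ condition (\ref{NAMBLASTAR4}) alone, which by Theorem \ref{DESTAR} is equivalent to $\tilde{Q}\xi =\lambda \xi $; the additional condition (\ref{TM}) carves out the strictly smaller class $\tilde{\nu}=0$ of paracontact $(\tilde{\kappa},\tilde{\mu})$-manifolds. Symmetrically, in your forward direction the extra hypothesis (\ref{TM}) would force $\tilde{\nu}=0$ from the outset, so you would never see the nonzero function $\tilde{\nu}$ emerge at all --- a sign that your hypotheses over-determine the problem rather than ``forcing exactly one additional scalar relation'' as you hoped.

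Once (\ref{TM}) is dropped, the rest of your skeleton is essentially the paper's proof: restrict to the open dense set built from $\{\tilde{h}\neq 0\}$ and the interior of $\{\tilde{h}=0\}$ (on the latter the manifold is $K$-paracontact, hence para-Sasakian in dimension $3$, and (\ref{Pasa}) gives the structure with $\tilde{\kappa}=-1$, $\tilde{h}=0$); use Theorem \ref{DESTAR} to convert harmonicity into $\tilde{\sigma}=\tilde{S}(\xi ,\cdot )|_{\ker \eta }=0$; then substitute the resulting expression for $\tilde{Q}$ into the three-dimensional curvature identity (\ref{THREE DIM CURVATURE}) with $Z=\xi $ and read off $\tilde{\kappa}=\tilde{S}(\xi ,\xi )/2$, $\tilde{\mu}$ and $\tilde{\nu}=-\xi (\tilde{\lambda})/\tilde{\lambda}$. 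One organizational caveat: you cannot split cases by the sign of $\tilde{\kappa}+1$ before the $(\tilde{\kappa},\tilde{\mu},\tilde{\nu})$-structure exists; the paper instead splits by the possible canonical (Jordan) forms of the self-adjoint operator $\tilde{h}$ with respect to the Lorentzian metric --- diagonalizable (type I), nilpotent (type II), complex-eigenvalue (type III), with type IV excluded --- and only \emph{a posteriori} do these correspond to $\tilde{\kappa}>-1$, $\tilde{\kappa}=-1$, $\tilde{\kappa}<-1$. Your diagonalizable/nilpotent dichotomy gestures at this but misses the type III case, which is where $\tilde{\kappa}<-1$ and which cannot be handled by diagonalizing $\tilde{h}$ over the reals.
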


We also show that a paracontact metric $(\tilde{\kappa},\tilde{\mu},\tilde{%
\nu})$-manifold with $\tilde{\kappa}=-1$ is not necessary para-Sasakian (see
the tensor $\tilde{h}$\ has the canonical form (II)). As stated above, this
case shows important difference with the contact Riemannian case. So we can
construct non-trival examples of non-para-Sasakian manifold with $\tilde{%
\kappa}=-1$. Also one could find examples about paracontact metric $(\tilde{%
\kappa},\tilde{\mu},\tilde{\nu})$-manifolds according to the cases $\tilde{%
\kappa}>-1,$ $~\tilde{\kappa}<-1.$

In $\S $ \ref{fifth}, we give a relation between non-Sasakian $(\kappa ,%
\mu
,\nu =const.)$-contact metric manifold with the Boeckx invariant $I_{M}=%
\frac{1-\frac{\mu }{2}}{\sqrt{1-\kappa }}$ is constant along the integral
curves of $\xi $ i.e. $\xi (I_{M})=0$ and $3$-dimensional paracontact metric 
$(\tilde{\kappa},\tilde{\mu},\tilde{\nu})$-manifold.

\section{Preliminaries}

\label{preliminaries}

\medskip An $(2n+1)$-dimensional smooth manifold $M$ is said to have an 
\emph{almost paracontact structure} if it admits a $(1,1)$-tensor field $%
\tilde{\varphi}$, a vector field $\xi $ and a $1$-form $\eta $ satisfying
the following conditions:

\begin{enumerate}
\item[(i)] $\eta(\xi )=1$, \ $\tilde{\varphi}^{2}=I-\eta \otimes \xi$,

\item[(ii)] the tensor field $\tilde{\varphi}$ induces an almost paracomplex
structure on each fibre of ${\mathcal{D}}=\ker(\eta)$, i.e. the $\pm 1$%
-eigendistributions, ${\mathcal{D}}^{\pm}:={\mathcal{D}}_{\tilde\varphi}(\pm
1)$ of $\tilde\varphi$ have equal dimension $n$.
\end{enumerate}

From the definition it follows that $\tilde{\varphi}\xi =0$, $\eta \circ 
\tilde{\varphi}=0$ and the endomorphism $\tilde{\varphi}$ has rank $2n$.
When the tensor field $N_{\tilde{\varphi}}:=[\tilde{\varphi},\tilde{\varphi}%
]-2d\eta \otimes \xi $ vanishes identically the almost paracontact manifold
is said to be \emph{normal}. If an almost paracontact manifold admits a
pseudo-Riemannian metric $\tilde{g}$ such that 
\begin{equation}
\tilde{g}(\tilde{\varphi}X,\tilde{\varphi}Y)=-\tilde{g}(X,Y)+\eta (X)\eta
(Y),  \label{G METRIC}
\end{equation}%
for all $X,Y\in \Gamma (TM)$, then we say that $(M,\tilde{\varphi},\xi ,\eta
,\tilde{g})$ is an \textit{almost paracontact metric manifold}. Notice that
any such a pseudo-Riemannian metric is necessarily of signature $(n+1,n)$.
For an almost paracontact metric manifold, there always exists an orthogonal
basis $\{X_{1},\ldots ,X_{n},Y_{1},\ldots ,Y_{n},\xi \}$ such that $\tilde{g}%
(X_{i},X_{j})=\delta _{ij}$, $\tilde{g}(Y_{i},Y_{j})=-\delta _{ij}$ and $%
Y_{i}=\tilde{\varphi}X_{i}$, for any $i,j\in \left\{ 1,\ldots ,n\right\} $.
Such basis is called a $\tilde{\varphi}$-basis.

If in addition $d\eta (X,Y)=\tilde{g}(X,\tilde{\varphi}Y)$ for all vector
fields $X,Y$ on $M,$ $(M,\tilde{\varphi},\xi ,\eta ,\tilde{g})$ is said to
be a \emph{paracontact metric manifold}. In a paracontact metric manifold
one defines a symmetric, trace-free operator $\tilde{h}:=\frac{1}{2}{%
\mathcal{L}}_{\xi }\tilde{\varphi}$. It is known \cite{Za} that $\tilde{h}$
anti-commutes with $\tilde{\varphi}$ and satisfies tr$\tilde{h}=0=$ $\tilde{h%
}\xi $ and 
\begin{equation}
\tilde{\nabla}\xi =-\tilde{\varphi}+\tilde{\varphi}\tilde{h},
\label{nablaxi}
\end{equation}%
where $\tilde{\nabla}$ is the Levi-Civita connection of the
pseudo-Riemannian manifold $(M,\tilde{g})$. Moreover $\tilde{h}\equiv 0$ if
and only if $\xi $ is a Killing vector field and in this case $(M,\tilde{%
\varphi},\xi ,\eta ,\tilde{g})$ is said to be a \emph{K-paracontact manifold}%
. A normal paracontact metric manifold is called a \textit{para-Sasakian
manifold}. Also in this context the para-Sasakian condition implies the $K$%
-paracontact condition and the converse holds only in dimension $3$ (see 
\cite{calvar}). Moreover, in any para-Sasakian manifold 
\begin{equation}
\tilde{R}(X,Y)\xi =-(\eta (Y)X-\eta (X)Y)  \label{Pasa}
\end{equation}%
holds, but unlike contact metric geometry the condition (\ref{Pasa}) not
necessarily implies that the manifold is para-Sasakian. Differentiating $%
\tilde{\nabla}_{Y}\xi =-\tilde{\varphi}Y+\tilde{\varphi}\tilde{h}Y$, we get 
\begin{equation}
\tilde{R}(X,Y)\xi =-(\tilde{\nabla}_{X}\tilde{\varphi})Y+(\tilde{\nabla}_{Y}%
\tilde{\varphi})X+(\tilde{\nabla}_{X}\tilde{\varphi}\tilde{h})Y-(\tilde{%
\nabla}_{Y}\tilde{\varphi}\tilde{h})X.  \label{CURVATURE 4}
\end{equation}%
In \cite{Za}, Zamkovoy proved that%
\begin{equation}
(\tilde{\nabla}_{\xi }\tilde{h})X=-\tilde{\varphi}X+\tilde{h}^{2}\tilde{%
\varphi}X+\tilde{\varphi}\tilde{R}(\xi ,X)\xi .  \label{irem000}
\end{equation}%
Moreover, he showed that Ricci curvature $\tilde{S}$ in the direction of $%
\xi $ is given by%
\begin{equation}
\tilde{S}(\xi ,\xi )=-2n+tr \tilde{h}^{2}.  \label{RICCI ZETA}
\end{equation}

An almost paracontact structure ($\tilde{\varphi},\xi ,\eta )$ is said to be 
\emph{integrable} if $N_{\tilde{\varphi}}(X,Y)\in \Gamma (\mathbb{R}\xi )$
whenever $X,Y\in \Gamma ({\mathcal{D}})$.

In \cite{JOANNA}, J. Welyczko proved that any $3$-dimensional paracontact
metric manifold is always integrable. So for $3$-dimensional paracontact
metric manifold, we have%
\begin{equation}
(\tilde{\nabla}_{X}\tilde{\varphi})Y=-\tilde{g}(X-\tilde{h}X,Y)\xi +\eta
(Y)(X-\tilde{h}X).  \label{INTEGRABLE}
\end{equation}

We end this section by pointing out the following.

\begin{theorem}[\protect\cite{MOTE}]
\label{motivation} Let $(M,\varphi ,\xi ,\eta ,g)~$\ be a non-Sasakian
contact metric $(\kappa ,\mu )$-space. Then $M$ admits a canonical
paracontact metric structure $(\tilde{\varphi},\xi ,\eta ,\tilde{g})$ given
by 
\begin{equation}
\tilde{\varphi}:=\frac{1}{\sqrt{1-\kappa }}h,\ \ \tilde{g}:=\frac{1}{\sqrt{%
1-\kappa }}d\eta (\cdot ,h\cdot )+\eta \otimes \eta .  \label{CAPAR1}
\end{equation}
\end{theorem}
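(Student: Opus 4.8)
The plan is to verify directly that the quadruple $(\tilde{\varphi},\xi ,\eta ,\tilde{g})$ satisfies each axiom of a paracontact metric structure collected in \S \ref{preliminaries}. First I would record the tools available on a non-Sasakian contact metric $(\kappa ,\mu )$-space. Since the space is non-Sasakian one has $\kappa <1$, so that $\lambda :=\sqrt{1-\kappa }>0$ and both $\tilde{\varphi}=\frac{1}{\lambda }h$ and $\tilde{g}$ are well defined. The facts I would lean on are the standard contact relations ($\varphi ^{2}=-I+\eta \otimes \xi $, $\varphi $ skew-symmetric, $h$ symmetric with $h\xi =0$, $\eta \circ h=0$, $h\varphi =-\varphi h$, and $d\eta (X,Y)=g(X,\varphi Y)$) together with the $(\kappa ,\mu )$-identity $h^{2}=(\kappa -1)\varphi ^{2}$, which I rewrite as $h^{2}=\lambda ^{2}(I-\eta \otimes \xi )$. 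Using $d\eta (X,hY)=g(X,\varphi hY)$ I would also rewrite the metric in the computationally convenient form $\tilde{g}(X,Y)=\frac{1}{\lambda }g(X,\varphi hY)+\eta (X)\eta (Y)$.

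Next I would check the $(1,1)$-tensor axioms. From $\tilde{\varphi}^{2}=\frac{1}{\lambda ^{2}}h^{2}$ and $h^{2}=\lambda ^{2}(I-\eta \otimes \xi )$ one gets at once $\tilde{\varphi}^{2}=I-\eta \otimes \xi $. For the almost paracomplex condition, the $h^{2}$-identity shows that $h$ has eigenvalues $\pm \lambda $ on $\mathcal{D}$, and $h\varphi =-\varphi h$ forces $\varphi $ to interchange the $+\lambda $- and $-\lambda $-eigenspaces; hence these have equal dimension $n$, and since $\tilde{\varphi}=\frac{1}{\lambda }h$ acts as $\pm 1$ on them, the $\pm 1$-eigendistributions of $\tilde{\varphi}$ each have dimension $n$.

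Then I would establish the metric axioms. Symmetry of $\tilde{g}$ reduces to self-adjointness of $\varphi h$, which follows from $(\varphi h)^{\ast }=h^{\ast }\varphi ^{\ast }=-h\varphi =\varphi h$. For the contact condition, since $\eta \circ \tilde{\varphi}=0$ I compute $\tilde{g}(X,\tilde{\varphi}Y)=\frac{1}{\lambda ^{2}}g(X,\varphi h^{2}Y)$, and $\varphi h^{2}Y=\lambda ^{2}\varphi Y$ (again by the $h^{2}$-identity and $\varphi \xi =0$), giving $\tilde{g}(X,\tilde{\varphi}Y)=g(X,\varphi Y)=d\eta (X,Y)$. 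The compatibility $\tilde{g}(\tilde{\varphi}X,\tilde{\varphi}Y)=-\tilde{g}(X,Y)+\eta (X)\eta (Y)$ goes the same way: expanding and using $\varphi h^{2}=\lambda ^{2}\varphi $ reduces the left-hand side to $\frac{1}{\lambda }g(hX,\varphi Y)$, and then $g(hX,\varphi Y)=-g(X,\varphi hY)$ (by symmetry of $h$ and $h\varphi =-\varphi h$) turns this into $-(\tilde{g}(X,Y)-\eta (X)\eta (Y))$, as required.

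The step needing the most care—the one I would flag as the main obstacle—is confirming that $\tilde{g}$ is genuinely non-degenerate of signature $(n+1,n)$, since the compatibility relations alone do not guarantee this. Here I would pass to a $g$-orthonormal eigenbasis $\{e_{1},\dots ,e_{n},\varphi e_{1},\dots ,\varphi e_{n},\xi \}$ with $he_{i}=\lambda e_{i}$ (so $h\varphi e_{i}=-\lambda \varphi e_{i}$) and compute the Gram matrix of $\tilde{g}$: one finds $\tilde{g}(e_{i},e_{j})=\tilde{g}(\varphi e_{i},\varphi e_{j})=0$, $\tilde{g}(e_{i},\varphi e_{j})=\delta _{ij}$ and $\tilde{g}(\xi ,\xi )=1$, so each plane $\mathrm{span}\{e_{i},\varphi e_{i}\}$ is hyperbolic of signature $(1,1)$. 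Summing over the $n$ planes together with the positive direction $\xi $ yields signature $(n+1,n)$, and setting $X_{i}=\frac{1}{\sqrt{2}}(e_{i}+\varphi e_{i})$, $Y_{i}=\frac{1}{\sqrt{2}}(e_{i}-\varphi e_{i})$ produces a $\tilde{\varphi}$-basis with $Y_{i}=\tilde{\varphi}X_{i}$, $\tilde{g}(X_{i},X_{j})=\delta _{ij}$ and $\tilde{g}(Y_{i},Y_{j})=-\delta _{ij}$. This completes the verification that $(\tilde{\varphi},\xi ,\eta ,\tilde{g})$ is a paracontact metric structure.
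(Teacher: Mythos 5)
Your proof is correct. Note, however, that the paper itself offers no proof of this statement: it is quoted verbatim from the reference \cite{MOTE} (Cappelletti Montano--Di Terlizzi), so there is no internal argument to compare against. Your direct verification is essentially the standard one and is complete: the identity $h^{2}=(\kappa -1)\varphi ^{2}$ (valid on any $(\kappa ,\mu )$-space, with $\kappa <1$ precisely in the non-Sasakian case) gives $\tilde{\varphi}^{2}=I-\eta \otimes \xi $; the anticommutation $h\varphi =-\varphi h$ gives both the equality of dimensions of the $\pm 1$-eigendistributions of $\tilde{\varphi}$ and the symmetry of $\varphi h$, hence of $\tilde{g}$; and the two compatibility identities follow from $\varphi h^{2}=\lambda ^{2}\varphi $ exactly as you computed. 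You were also right to flag the signature as the one point not forced by the algebraic identities alone, and your Gram-matrix computation in the eigenbasis $\{e_{i},\varphi e_{i},\xi \}$ settles it (one should also record $\tilde{g}(\xi ,X)=\eta (X)$, so that $\xi $ is $\tilde{g}$-orthogonal to $\ker \eta $ and the hyperbolic planes; this is immediate from $\eta \circ h=0$ and skew-symmetry of $\varphi $, and your construction of the $\tilde{\varphi}$-basis $X_{i}=\frac{1}{\sqrt{2}}(e_{i}+\varphi e_{i})$, $Y_{i}=\tilde{\varphi}X_{i}$ then exhibits the signature $(n+1,n)$ explicitly). The only implicit hypotheses worth stating are that $\kappa ,\mu $ are constants and $h$ has constant rank on a $(\kappa ,\mu )$-space, so the eigendistributions ${\mathcal{D}}(\pm \lambda )$ are smooth and the local eigenframe exists; with that remark added, the argument stands as a self-contained substitute for the citation.
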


\section{Preliminary results on 2n+1-dimensional paracontact metric $(\tilde{%
\protect\kappa},\tilde{\protect\mu},\tilde{\protect\nu})$-manifolds}

\label{third}

Theorem \ref{motivation} motivates the following definition.

\begin{definition}
\label{pkm}A $2n+1$-dimensional paracontact metric $(\tilde{\kappa},\tilde{%
\mu},\tilde{\nu})$-manifold is a paracontact metric manifold for which the
curvature tensor field satisfies 
\begin{equation}
\tilde{R}(X,Y)\xi =\tilde{\kappa}(\eta (Y)X-\eta (X)Y)+\tilde{\mu}(\eta (Y)%
\tilde{h}X-\eta (X)\tilde{h}Y)+\tilde{\nu}(\eta (Y)\tilde{\varphi}\tilde{h}%
X-\eta (X)\tilde{\varphi}\tilde{h}Y),  \label{PARAKMU}
\end{equation}%
for all $X,Y\in \Gamma (TM)$, where $\tilde{\kappa},\tilde{\mu},\tilde{\nu}$
are smooth functions on $M$.
\end{definition}

In this section, we discuss some properties of paracontact metric manifolds
satisfying the condition \eqref{PARAKMU}. We start with some preliminary
properties.

\begin{lemma}
\label{hq}Let $(M,\tilde{\varphi},\xi ,\eta ,\tilde{g})$ be a $2n+1$%
-dimensional paracontact metric $(\tilde{\kappa},\tilde{\mu},\tilde{\nu})$%
-manifold. Then the following identities hold: 
\begin{gather}
\tilde{h}^{2}=(1+\tilde{\kappa})\tilde{\varphi}^{2},  \label{H2} \\
\tilde{Q}\xi =2n\tilde{\kappa}\xi ,  \label{Riczeta} \\
(\tilde{\nabla}_{X}\tilde{\varphi})Y=-\tilde{g}(X-\tilde{h}X,Y)\xi +\eta
(Y)(X-\tilde{h}X),\ \text{for }\tilde{\kappa}\neq -1,\text{ }
\label{NAMBLAFI} \\
(\tilde{\nabla}_{X}\tilde{h})Y-(\tilde{\nabla}_{Y}\tilde{h})X=-(1+\tilde{%
\kappa})(2\tilde{g}(X,\tilde{\varphi}Y)\xi +\eta (X)\tilde{\varphi}Y-\eta (Y)%
\tilde{\varphi}X)  \label{NAMLA X H} \\
\quad \quad \quad \quad \quad +(1-\tilde{\mu})(\eta (X)\tilde{\varphi}\tilde{%
h}Y-\eta (Y)\tilde{\varphi}\tilde{h}X)  \notag \\
-\tilde{\nu}(\eta (X)\tilde{h}Y-\eta (Y)\tilde{h}X),\text{\ }  \notag \\
(\tilde{\nabla}_{X}\tilde{\varphi}\tilde{h})Y-(\tilde{\nabla}_{Y}\tilde{%
\varphi}\tilde{h})X=-(1+\tilde{\kappa})(\eta (X)Y-\eta (Y)X)
\label{NAMLAFIH} \\
+(1-\tilde{\mu})(\eta (X)\tilde{h}Y-\eta (Y)\tilde{h}X)  \notag \\
-\tilde{\nu}(\eta (X)\tilde{\varphi}\tilde{h}Y-\eta (Y)\tilde{\varphi}\tilde{%
h}X),  \notag \\
\tilde{\nabla}_{\xi }\tilde{h}=\tilde{\mu}\tilde{h}\circ \tilde{\varphi}-%
\tilde{\nu}\tilde{h},\ \ \ \tilde{\nabla}_{\xi }\tilde{\varphi}\tilde{h}=-%
\tilde{\mu}\tilde{h}+\tilde{\nu}\tilde{h}\circ \tilde{\varphi},\text{ \ }
\label{NMBLA ZETAH} \\
\tilde{R}_{\xi X}Y=\tilde{\kappa}(\tilde{g}(X,Y)\xi -\eta (Y)X)+\tilde{\mu}(%
\tilde{g}(\tilde{h}X,Y)\xi -\eta (Y)\tilde{h}X)  \label{RZETAXY} \\
+\tilde{\nu}(\tilde{g}(\tilde{\varphi}\tilde{h}X,Y)\xi -\eta (Y)\tilde{%
\varphi}\tilde{h}X),  \notag \\
\xi (\tilde{\kappa})=-2\tilde{\nu}(1+\tilde{\kappa}),  \label{ZETAK VU1}
\end{gather}%
for any vector fields $X$, $Y$ on $M$, where $\tilde{Q}$ denotes the Ricci
operator of $(M,\tilde{g})$.

\begin{proof}
The proof of (\ref{H2})-(\ref{NAMLA X H}) are similar to that of \ [\cite%
{CMEM}, Lemma 3.2]. The relation (\ref{NAMLAFIH}) is an immediate
consequence of (\ref{NAMBLAFI}), (\ref{NAMLA X H}) and $(\tilde{\nabla}_{X}%
\tilde{\varphi}\tilde{h})Y-(\tilde{\nabla}_{Y}\tilde{\varphi}\tilde{h})X=(%
\tilde{\nabla}_{X}\tilde{\varphi})\tilde{h}Y-(\tilde{\nabla}_{Y}\tilde{%
\varphi})\tilde{h}X+\tilde{\varphi}((\tilde{\nabla}_{X}\tilde{h})Y-(\tilde{%
\nabla}_{Y}\tilde{h})X).$ Putting $\xi $ instead of $X$ in the relation (\ref%
{NAMLA X H}), we have (\ref{NMBLA ZETAH}). Taking into account $\tilde{h}%
\tilde{\varphi}=-\tilde{\varphi}\tilde{h}$, (\ref{H2}) and (\ref{NMBLA ZETAH}%
), we obtain 
\begin{equation}
\tilde{\nabla}_{\xi }\tilde{h}^{2}=(\tilde{\nabla}_{\xi }\tilde{h})\tilde{h}+%
\tilde{h}(\tilde{\nabla}_{\xi }\tilde{h})=(\tilde{\mu}\tilde{h}\tilde{\varphi%
}-\tilde{\nu}\tilde{h})\tilde{h}+\tilde{h}(\tilde{\mu}\tilde{h}\tilde{\varphi%
}-\tilde{\nu}\tilde{h})=-2\tilde{\nu}(1+\tilde{\kappa})\tilde{\varphi}^{2}.
\label{zetah2}
\end{equation}%
Alternately, differentiating (\ref{H2}) along $\xi $ and using (\ref%
{INTEGRABLE}), we obtain 
\begin{equation}
\tilde{\nabla}_{\xi }\tilde{h}^{2}=\xi (\tilde{\kappa})\tilde{\varphi}^{2}.
\label{zetak}
\end{equation}

Combining (\ref{zetah2}) and (\ref{zetak}), we complete the proof (\ref%
{ZETAK VU1}).
\end{proof}
\end{lemma}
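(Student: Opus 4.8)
The plan is to establish the two displayed formulas for $\tilde\nabla_\xi\tilde h$ and $\tilde\nabla_\xi\tilde\varphi\tilde h$ (equation \eqref{NMBLA ZETAH}), compute $\tilde\nabla_\xi\tilde h^2$ in two independent ways, and equate them to extract \eqref{ZETAK VU1}. The author has already signaled the structure, so let me verify each link. First I would derive \eqref{NMBLA ZETAH}: in \eqref{NAMLA X H}, substitute $X=\xi$. Since $\eta(\xi)=1$, $\eta(Y)$ survives, $\tilde\varphi\xi=0$, and $\tilde g(\xi,\tilde\varphi Y)=0$, the first line $-(1+\tilde\kappa)(2\tilde g(\xi,\tilde\varphi Y)\xi+\tilde\varphi Y-\eta(Y)\tilde\varphi\xi)$ collapses to $-(1+\tilde\kappa)\tilde\varphi Y$, and the terms in $\tilde\varphi\tilde h$ and $\tilde h$ reduce to $(1-\tilde\mu)\tilde\varphi\tilde hY$ and $-\tilde\nu\,\tilde hY$ respectively. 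I then need the left side $(\tilde\nabla_\xi\tilde h)Y-(\tilde\nabla_Y\tilde h)\xi$; using $\tilde h\xi=0$ and \eqref{nablaxi} to compute $(\tilde\nabla_Y\tilde h)\xi=-\tilde h\tilde\nabla_Y\xi=-\tilde h(-\tilde\varphi+\tilde\varphi\tilde h)Y$, together with $\tilde h\tilde\varphi=-\tilde\varphi\tilde h$ and \eqref{H2} giving $\tilde h\tilde\varphi\tilde h=-\tilde\varphi\tilde h^2=-(1+\tilde\kappa)\tilde\varphi^3=(1+\tilde\kappa)\tilde\varphi$, one solves for $\tilde\nabla_\xi\tilde h$ and reads off the stated $\tilde\mu\,\tilde h\circ\tilde\varphi-\tilde\nu\,\tilde h$. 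The companion formula for $\tilde\nabla_\xi\tilde\varphi\tilde h$ follows by applying $\tilde\nabla_\xi$ to $\tilde\varphi\tilde h$ via Leibniz, using $\tilde\nabla_\xi\tilde\varphi$ (obtainable from \eqref{NAMBLAFI} with $X=\xi$, which gives $(\tilde\nabla_\xi\tilde\varphi)Y=0$ since the $\xi$-terms and $\eta(Y)\xi$-terms cancel appropriately) so that $\tilde\nabla_\xi(\tilde\varphi\tilde h)=\tilde\varphi(\tilde\nabla_\xi\tilde h)=\tilde\varphi(\tilde\mu\,\tilde h\tilde\varphi-\tilde\nu\,\tilde h)=-\tilde\mu\,\tilde h+\tilde\nu\,\tilde h\tilde\varphi$.

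For the main computation I would expand $\tilde\nabla_\xi\tilde h^2=(\tilde\nabla_\xi\tilde h)\tilde h+\tilde h(\tilde\nabla_\xi\tilde h)$ and substitute $\tilde\nabla_\xi\tilde h=\tilde\mu\,\tilde h\tilde\varphi-\tilde\nu\,\tilde h$. This gives $\tilde\mu(\tilde h\tilde\varphi\tilde h+\tilde h^2\tilde\varphi)-2\tilde\nu\,\tilde h^2$. The anti-commutation $\tilde h\tilde\varphi=-\tilde\varphi\tilde h$ forces $\tilde h\tilde\varphi\tilde h=-\tilde\varphi\tilde h^2=-\tilde h^2\tilde\varphi$ (using that $\tilde h^2$ commutes with $\tilde\varphi$, which holds because $\tilde h^2=(1+\tilde\kappa)\tilde\varphi^2$ by \eqref{H2}), so the $\tilde\mu$-bracket vanishes identically. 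We are left with $\tilde\nabla_\xi\tilde h^2=-2\tilde\nu\,\tilde h^2=-2\tilde\nu(1+\tilde\kappa)\tilde\varphi^2$, which is \eqref{zetah2}. The independent route \eqref{zetak} differentiates \eqref{H2} directly: $\tilde\nabla_\xi\tilde h^2=\xi(\tilde\kappa)\tilde\varphi^2+(1+\tilde\kappa)\tilde\nabla_\xi\tilde\varphi^2$, and one checks $\tilde\nabla_\xi\tilde\varphi^2=0$ since $\tilde\varphi^2=I-\eta\otimes\xi$ is covariantly constant along $\xi$ (because $\tilde\nabla_\xi\eta=0$ and $\tilde\nabla_\xi\xi=0$ as $\tilde\nabla_\xi\xi=-\tilde\varphi\xi+\tilde\varphi\tilde h\xi=0$), leaving $\xi(\tilde\kappa)\tilde\varphi^2$.

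Finally I equate \eqref{zetah2} and \eqref{zetak}: $\xi(\tilde\kappa)\tilde\varphi^2=-2\tilde\nu(1+\tilde\kappa)\tilde\varphi^2$. Since $\tilde\varphi^2=I-\eta\otimes\xi$ is nonzero (it acts as identity on the contact distribution $\mathcal D$), comparing coefficients on $\mathcal D$ yields $\xi(\tilde\kappa)=-2\tilde\nu(1+\tilde\kappa)$, which is precisely \eqref{ZETAK VU1}.

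\textbf{Main obstacle.} The genuinely delicate point is establishing the first formula in \eqref{NMBLA ZETAH}, since getting $\tilde\nabla_\xi\tilde h$ correct requires carefully evaluating $(\tilde\nabla_Y\tilde h)\xi$ and repeatedly invoking $\tilde h\tilde\varphi=-\tilde\varphi\tilde h$ together with \eqref{H2}; sign errors propagate fatally into the $\tilde\mu$-cancellation. Everything downstream is then bookkeeping, and the concluding comparison is trivial once one notes $\tilde\varphi^2\neq 0$.
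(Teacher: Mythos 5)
Your proposal follows essentially the same route as the paper's own proof: the identities \eqref{H2}--\eqref{NAMLA X H} are taken as inputs (the paper defers them to \cite{CMEM}, Lemma 3.2), \eqref{NMBLA ZETAH} is obtained by specializing \eqref{NAMLA X H} at $X=\xi$, and \eqref{ZETAK VU1} comes from computing $\tilde{\nabla}_{\xi}\tilde{h}^{2}$ in two independent ways and comparing the coefficients of $\tilde{\varphi}^{2}$ on $\ker \eta$. The only cosmetic differences are that you derive the second half of \eqref{NMBLA ZETAH} by a Leibniz expansion together with $(\tilde{\nabla}_{\xi }\tilde{\varphi})=0$, whereas the paper specializes \eqref{NAMLAFIH} at $X=\xi$ (an identity it proves by exactly the Leibniz formula you invoke), and that you justify $\tilde{\nabla}_{\xi }\tilde{\varphi}^{2}=0$ directly from $\tilde{\varphi}^{2}=I-\eta \otimes \xi$, $\tilde{\nabla}_{\xi }\xi =0$, rather than by citing \eqref{INTEGRABLE}; both are sound. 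Note only that \eqref{NAMLAFIH} for general $X,Y$ is itself one of the claimed identities and is not established in your sketch, though the one-line Leibniz argument you use at $\xi$ is precisely how the paper handles it.

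One slip to correct: in your derivation of $\tilde{\nabla}_{\xi }\tilde{h}$ you write $\tilde{h}\tilde{\varphi}\tilde{h}=-\tilde{\varphi}\tilde{h}^{2}=-(1+\tilde{\kappa})\tilde{\varphi}^{3}=(1+\tilde{\kappa})\tilde{\varphi}$. Since $\tilde{\varphi}^{3}=\tilde{\varphi}$, the last equality is false; the correct value is $\tilde{h}\tilde{\varphi}\tilde{h}=-(1+\tilde{\kappa})\tilde{\varphi}$. The sign matters for the very cancellation you call delicate: with the correct sign one gets $(\tilde{\nabla}_{Y}\tilde{h})\xi =-\tilde{\varphi}\tilde{h}Y+(1+\tilde{\kappa})\tilde{\varphi}Y$, and the term $(1+\tilde{\kappa})\tilde{\varphi}Y$ cancels against the term $-(1+\tilde{\kappa})\tilde{\varphi}Y$ coming from the right-hand side of \eqref{NAMLA X H}, leaving exactly $\tilde{\nabla}_{\xi }\tilde{h}=\tilde{\mu}\tilde{h}\circ \tilde{\varphi}-\tilde{\nu}\tilde{h}$; with the sign as you wrote it, a spurious term $-2(1+\tilde{\kappa})\tilde{\varphi}$ would survive and the stated formula would not follow. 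Since your final formula is the correct one, and since in the subsequent $\tilde{\mu}$-cancellation for $\tilde{\nabla}_{\xi }\tilde{h}^{2}$ you use the correct relation $\tilde{h}\tilde{\varphi}\tilde{h}=-\tilde{h}^{2}\tilde{\varphi}$, this reads as a typo rather than a propagated error, but as written that intermediate equality should be fixed.
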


Remarkable subclasses of paracontact $(\tilde{\kappa},\tilde{\mu},\tilde{\nu}%
)$-manifolds are given, in view of \eqref{Pasa}, by para-Sasakian manifolds,
and by those paracontact metric manifolds such that $\tilde{R}(X,Y)\xi =0$
for all vector fields $X,Y$ on $M$. Note that, by \eqref{H2}, a paracontact $%
(\tilde{\kappa},\tilde{\mu},\tilde{\nu})$-manifold such that $\tilde{\kappa}%
=-1$ satisfies $\tilde{h}^{2}=0$. Unlike the contact metric case, since the
metric $\tilde{g}$ is pseudo-Riemannian we can not conclude that $\tilde{h}$
vanishes and so the manifold is para-Sasakian.

\medskip Given a paracontact metric structure $(\tilde{\varphi},\xi ,\eta ,%
\tilde{g})$ and $\alpha >0$, the change of structure tensors 
\begin{equation}
\bar{\eta}=\alpha \eta ,\text{ \ \ }\bar{\xi}=\frac{1}{\alpha }\xi ,\text{ \
\ }\bar{\varphi}=\tilde{\varphi},\text{ \ \ }\bar{g}=\alpha \tilde{g}+\alpha
(\alpha -1)\eta \otimes \eta  \label{DHOMOTHETIC}
\end{equation}%
is called a \emph{${\mathcal{D}}_{\alpha }$-homothetic deformation}. One can
easily check that the new structure $(\bar{\varphi},\bar{\xi},\bar{\eta},%
\bar{g})$ is still a paracontact metric structure \cite{Za}.

\begin{proposition}[\protect\cite{CMEM}]
\label{levicivita} Let $(\bar{\varphi},\bar{\xi},\bar{\eta},\bar{g})$ be a
paracontact metric structure obtained from $(\tilde{\varphi},\tilde{\xi},%
\tilde{\eta},\tilde{g})$ by a ${\mathcal{D}}_{\alpha }$-homothetic
deformation. Then we have the following relationship between the Levi-Civita
connections $\bar{\nabla}$ and $\tilde{\nabla}$ of $\bar{g}$ and $\tilde{g}$%
, respectively, 
\begin{equation}
\bar{\nabla}_{X}Y=\tilde{\nabla}_{X}Y+\frac{\alpha -1}{\alpha }\tilde{g}(%
\tilde{\varphi}\tilde{h}X,Y)\xi -(\alpha -1)\left( \eta (Y)\tilde{\varphi}%
X+\eta (X)\tilde{\varphi}Y\right) .  \label{CONNECTION}
\end{equation}%
Furthermore, 
\begin{equation}
\bar{h}=\frac{1}{\alpha }\tilde{h}.  \label{H BAR}
\end{equation}
\end{proposition}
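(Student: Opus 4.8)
The plan is to establish the connection formula by appealing to the uniqueness of the Levi-Civita connection: I will take the right-hand side of \eqref{CONNECTION} as a candidate connection $\bar{\nabla}$ and verify that it is torsion-free and metric-compatible with $\bar{g}$. Writing $\bar{\nabla}_{X}Y=\tilde{\nabla}_{X}Y+T(X,Y)$ with $T(X,Y)=\frac{\alpha-1}{\alpha}\tilde{g}(\tilde{\varphi}\tilde{h}X,Y)\xi-(\alpha-1)(\eta(Y)\tilde{\varphi}X+\eta(X)\tilde{\varphi}Y)$, torsion-freeness reduces to the symmetry $T(X,Y)=T(Y,X)$. The second summand is manifestly symmetric, and for the first I would use that $\tilde{\varphi}$ is skew-symmetric (a consequence of $d\eta(X,Y)=\tilde{g}(X,\tilde{\varphi}Y)$ being a $2$-form), that $\tilde{h}$ is symmetric, and that $\tilde{h}\tilde{\varphi}=-\tilde{\varphi}\tilde{h}$; together these give $\tilde{g}(\tilde{\varphi}\tilde{h}X,Y)=\tilde{g}(\tilde{\varphi}\tilde{h}Y,X)$, so that $T$ is symmetric and $\bar{\nabla}$ is torsion-free.

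The substantive step is metric compatibility, namely $X(\bar{g}(Y,Z))=\bar{g}(\bar{\nabla}_{X}Y,Z)+\bar{g}(Y,\bar{\nabla}_{X}Z)$. Expanding $\bar{g}=\alpha\tilde{g}+\alpha(\alpha-1)\eta\otimes\eta$ as in \eqref{DHOMOTHETIC}, the $\alpha\tilde{g}$ contribution cancels automatically because $\tilde{\nabla}$ is compatible with $\tilde{g}$, so the task is to show that the derivatives of the $\eta\otimes\eta$ part are exactly matched by the $T$-terms. The essential ingredient is the identity $X(\eta(Y))=\eta(\tilde{\nabla}_{X}Y)+\tilde{g}(\tilde{\nabla}_{X}\xi,Y)$ combined with \eqref{nablaxi}, i.e. $\tilde{\nabla}_{X}\xi=-\tilde{\varphi}X+\tilde{\varphi}\tilde{h}X$. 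Substituting this yields $X(\eta(Y))=\eta(\tilde{\nabla}_{X}Y)-\tilde{g}(\tilde{\varphi}X,Y)+\tilde{g}(\tilde{\varphi}\tilde{h}X,Y)$, producing precisely the $\tilde{g}(\tilde{\varphi}X,Y)$ and $\tilde{g}(\tilde{\varphi}\tilde{h}X,Y)$ terms that must be reconciled with the contributions of $T$. I would then expand $\bar{g}(T(X,Y),Z)+\bar{g}(Y,T(X,Z))$ using $\bar{g}(\xi,\cdot)=\alpha\eta(\cdot)$, $\bar{g}(\tilde{\varphi}X,Z)=\alpha\tilde{g}(\tilde{\varphi}X,Z)$ (valid since $\eta\circ\tilde{\varphi}=0$), and $\eta(\tilde{\varphi}X)=0$, and check term-by-term cancellation; the two $\tilde{g}(\tilde{\varphi}\tilde{h}X,Y)\eta(Z)$ contributions combine with coefficient $(\alpha-1)+(\alpha-1)^{2}=\alpha(\alpha-1)$, and the $\eta(X)(\tilde{g}(\tilde{\varphi}Y,Z)+\tilde{g}(\tilde{\varphi}Z,Y))$ term vanishes by skew-symmetry of $\tilde{\varphi}$.

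The formula $\bar{h}=\frac{1}{\alpha}\tilde{h}$ is then immediate and independent of the connection computation. Since $\bar{\varphi}=\tilde{\varphi}$ and $\bar{\xi}=\frac{1}{\alpha}\xi$ with $\alpha$ a positive constant, and the Lie bracket satisfies $[\frac{1}{\alpha}\xi,W]=\frac{1}{\alpha}[\xi,W]$ (the constant factor passes through because its directional derivatives vanish), one obtains $\mathcal{L}_{\bar{\xi}}\bar{\varphi}=\frac{1}{\alpha}\mathcal{L}_{\xi}\tilde{\varphi}$, whence $\bar{h}=\frac{1}{2}\mathcal{L}_{\bar{\xi}}\bar{\varphi}=\frac{1}{\alpha}\tilde{h}$.

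I expect the main obstacle to be the bookkeeping in the metric-compatibility verification: tracking which $\eta$-factors and which $\tilde{\varphi}X$ versus $\tilde{\varphi}\tilde{h}X$ contributions arise from differentiating $\eta\otimes\eta$ against those produced by the tensor $T$, and confirming that the weights $\alpha(\alpha-1)$ and $\frac{\alpha-1}{\alpha}$ are precisely what is needed for cancellation. In particular, the factor $\frac{\alpha-1}{\alpha}$ on the $\xi$-component should emerge from the interplay between the $\alpha(\alpha-1)$ weighting of $\eta\otimes\eta$ in $\bar{g}$ and the $\alpha$-rescaling that occurs when the $\xi$-valued part of $T$ is paired against $\bar{g}$; getting these constants to align is the only delicate point, everything else being forced by the algebraic relations $\tilde{h}\tilde{\varphi}=-\tilde{\varphi}\tilde{h}$, the symmetry of $\tilde{h}$, and the skew-symmetry of $\tilde{\varphi}$.
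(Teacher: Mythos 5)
Your proposal is correct, and it is worth pointing out that the paper under review contains no proof of this statement at all: Proposition \ref{levicivita} is imported from \cite{CMEM} as a quoted result. Your argument is therefore a genuine, self-contained alternative to the literature derivation, which obtains \eqref{CONNECTION} by computing the Koszul formula for $\bar{g}=\alpha \tilde{g}+\alpha (\alpha -1)\eta \otimes \eta $ directly. Your route --- regard the right-hand side of \eqref{CONNECTION} as a candidate connection $\tilde{\nabla}+T$ with $T$ tensorial, check that $T$ is symmetric (torsion-freeness) and that the candidate is $\bar{g}$-compatible, then invoke uniqueness of the Levi-Civita connection --- is sound and arguably cleaner, since it replaces a derivation by a verification. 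The algebraic inputs you isolate are exactly the right ones: skew-symmetry of $\tilde{\varphi}$ (from $d\eta (X,Y)=\tilde{g}(X,\tilde{\varphi}Y)$), symmetry of $\tilde{h}$ together with $\tilde{h}\tilde{\varphi}=-\tilde{\varphi}\tilde{h}$, which give $\tilde{g}(\tilde{\varphi}\tilde{h}X,Y)=\tilde{g}(X,\tilde{\varphi}\tilde{h}Y)$, and the identity $X(\eta (Y))=\eta (\tilde{\nabla}_{X}Y)-\tilde{g}(\tilde{\varphi}X,Y)+\tilde{g}(\tilde{\varphi}\tilde{h}X,Y)$ coming from \eqref{nablaxi}, which is what makes the compatibility check close up. The deduction of \eqref{H BAR} from $\bar{\varphi}=\tilde{\varphi}$, $\bar{\xi}=\frac{1}{\alpha }\xi $ and constancy of $\alpha $ is likewise correct and, as you say, independent of the connection formula.

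One slip should be corrected, although it does not damage the argument: you assert $\bar{g}(\xi ,\cdot )=\alpha \eta (\cdot )$, whereas \eqref{DHOMOTHETIC} gives $\bar{g}(\xi ,Z)=\alpha \tilde{g}(\xi ,Z)+\alpha (\alpha -1)\eta (Z)=\alpha ^{2}\eta (Z)$ (equivalently $\bar{g}(\bar{\xi},\cdot )=\bar{\eta}$). Your subsequent bookkeeping is in fact the correct one: pairing the $\xi $-component of $T$ against the $\alpha \tilde{g}$ part of $\bar{g}$ contributes $(\alpha -1)\tilde{g}(\tilde{\varphi}\tilde{h}X,Y)\eta (Z)$ and against the $\alpha (\alpha -1)\eta \otimes \eta $ part contributes $(\alpha -1)^{2}\tilde{g}(\tilde{\varphi}\tilde{h}X,Y)\eta (Z)$, totalling $\alpha (\alpha -1)\tilde{g}(\tilde{\varphi}\tilde{h}X,Y)\eta (Z)$, which exactly matches the term produced on the left-hand side by differentiating $\eta \otimes \eta $. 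So the computation you describe is the right one; only the displayed identity for $\bar{g}(\xi ,\cdot )$ needs to read $\alpha ^{2}\eta (\cdot )$. Had you literally used $\alpha \eta (\cdot )$, the $\tilde{\varphi}\tilde{h}$-terms would fail to cancel and the check would not close.
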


After some straightforward calculations one can prove the following
proposition.

\begin{proposition}[\protect\cite{CMEM}]
\label{levicivita 2}Under the same assumptions of Proposition \ref%
{levicivita}, the curvature tensor fields $\bar{R}$ and $\tilde{R}$ are
related by \ 
\begin{align}
\alpha \bar{R}(X,Y)\bar{\xi}& =\tilde{R}(X,Y)\xi -(\alpha -1)((\tilde{\nabla}%
_{X}\tilde{\varphi})Y-(\tilde{\nabla}_{Y}\tilde{\varphi})X+\eta (Y)(X-\tilde{%
h}X)-\eta (X)(Y-\tilde{h}Y))  \notag \\
& \quad -(\alpha -1)^{2}(\eta (Y)X-\eta (X)Y).  \label{CURVATURE}
\end{align}
\end{proposition}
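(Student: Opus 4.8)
The plan is to compute $\bar{R}(X,Y)\xi$ directly from the connection relation \eqref{CONNECTION} and to compare it with $\tilde{R}(X,Y)\xi$. Since $\bar{\xi}=\frac{1}{\alpha}\xi$ by \eqref{DHOMOTHETIC} and the curvature tensor is linear in its last argument, we have $\alpha\bar{R}(X,Y)\bar{\xi}=\bar{R}(X,Y)\xi$, so it suffices to expand the latter. Write $\bar{\nabla}_{X}Y=\tilde{\nabla}_{X}Y+D(X,Y)$, where by \eqref{CONNECTION}
\[
D(X,Y)=\tfrac{\alpha-1}{\alpha}\tilde{g}(\tilde{\varphi}\tilde{h}X,Y)\xi-(\alpha-1)\bigl(\eta(Y)\tilde{\varphi}X+\eta(X)\tilde{\varphi}Y\bigr)
\]
is a $(1,2)$-tensor field. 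The standard identity for two connections differing by a tensor then gives
\[
\bar{R}(X,Y)Z=\tilde{R}(X,Y)Z+(\tilde{\nabla}_{X}D)(Y,Z)-(\tilde{\nabla}_{Y}D)(X,Z)+D(X,D(Y,Z))-D(Y,D(X,Z)),
\]
and I would specialize this to $Z=\xi$, so that the four correction terms must be shown to equal the right-hand side of the claim minus $\tilde{R}(X,Y)\xi$.

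The key preliminary simplification is that, using $\eta(\xi)=1$, $\tilde{\varphi}\xi=0$ and $\tilde{g}(\tilde{\varphi}\tilde{h}Y,\xi)=0$, one obtains $D(Y,\xi)=-(\alpha-1)\tilde{\varphi}Y$. From this the quadratic terms $D(X,D(Y,\xi))$ reduce, via \eqref{G METRIC}, $\tilde{\varphi}^{2}=I-\eta\otimes\xi$, the anticommuting relation $\tilde{h}\tilde{\varphi}=-\tilde{\varphi}\tilde{h}$ and $\tilde{h}\xi=0$, to terms of the form $\tfrac{(\alpha-1)^{2}}{\alpha}\tilde{g}(\tilde{h}X,Y)\xi+(\alpha-1)^{2}\eta(X)(Y-\eta(Y)\xi)$. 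Antisymmetrizing in $X,Y$, the $\tilde{g}(\tilde{h}\cdot,\cdot)\xi$ contributions cancel because $\tilde{h}$ is symmetric, leaving exactly $-(\alpha-1)^{2}(\eta(Y)X-\eta(X)Y)$, which is the last term of the claim. For the derivative terms, using the Leibniz rule one checks that $(\tilde{\nabla}_{X}D)(Y,\xi)=-(\alpha-1)(\tilde{\nabla}_{X}\tilde{\varphi})Y-D(Y,\tilde{\nabla}_{X}\xi)$, since the $\tilde{\varphi}\tilde{\nabla}_{X}Y$ contributions cancel against $D(\tilde{\nabla}_{X}Y,\xi)$. Substituting $\tilde{\nabla}_{X}\xi=-\tilde{\varphi}X+\tilde{\varphi}\tilde{h}X$ from \eqref{nablaxi} and simplifying $D(Y,\tilde{\nabla}_{X}\xi)$ with the same structural identities produces, after antisymmetrization, the term $-(\alpha-1)\bigl((\tilde{\nabla}_{X}\tilde{\varphi})Y-(\tilde{\nabla}_{Y}\tilde{\varphi})X+\eta(Y)(X-\tilde{h}X)-\eta(X)(Y-\tilde{h}Y)\bigr)$.

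The main obstacle is the bookkeeping in the derivative terms: one must verify that all the $\xi$-valued pieces coming from $D(Y,\tilde{\nabla}_{X}\xi)$ and $D(X,\tilde{\nabla}_{Y}\xi)$ cancel under antisymmetrization. This hinges on the symmetry of both $\tilde{h}$ and $\tilde{h}^{2}$, which forces $\tilde{g}(\tilde{h}Y,X)=\tilde{g}(\tilde{h}X,Y)$ and $\tilde{g}(\tilde{h}^{2}Y,X)=\tilde{g}(\tilde{h}^{2}X,Y)$, so that the $\xi$-components drop out and only the stated $\eta$- and $\tilde{h}$-terms survive. Collecting the quadratic and derivative contributions reproduces precisely \eqref{CURVATURE}, completing the argument. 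Everything else is routine tensor algebra with the structural identities recorded in the preliminaries; the only genuinely delicate point is keeping track of the signs and the placement of $\eta(X)\eta(Y)\xi$ terms, which must cancel rather than accumulate.
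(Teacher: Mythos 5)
Your proof is correct: the difference-tensor formula for the two curvature tensors, the evaluation $D(Y,\xi)=-(\alpha-1)\tilde{\varphi}Y$, the quadratic terms yielding $-(\alpha-1)^{2}(\eta(Y)X-\eta(X)Y)$, and the cancellation of all $\xi$-valued pieces via the symmetry of $\tilde{h}$ and $\tilde{h}^{2}$ all check out and assemble into \eqref{CURVATURE}. The paper offers no proof of its own (it cites \cite{CMEM} and calls this a straightforward calculation from Proposition \ref{levicivita}), and your direct expansion is precisely that calculation, so you have taken essentially the same route.
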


Using Proposition \ref{levicivita 2} one can easily get following

\begin{proposition}
\label{kmv1}If $(M,\tilde{\varphi},\xi ,\eta ,\tilde{g})$ is a paracontact
metric $(\tilde{\kappa},\tilde{\mu},\tilde{\nu})$-manifold, then $(\bar{%
\varphi},\bar{\xi},\bar{\eta},\bar{g})$ is a paracontact metric $(\bar{\kappa%
},\bar{\mu},\bar{\nu})$-structure, where 
\begin{equation}
\bar{\kappa}=\frac{\tilde{\kappa}+1-\alpha ^{2}}{\alpha ^{2}},\ \ \ \bar{\mu}%
=\frac{\tilde{\mu}+2\alpha -2}{\alpha },\text{ \ }\bar{\nu}=\frac{\tilde{\nu}%
}{\alpha }\text{\ }.  \label{dhomothetic1}
\end{equation}
\end{proposition}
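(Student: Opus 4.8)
The plan is to verify the three deformation formulas by substituting the defining curvature identity \eqref{PARAKMU} for $\tilde{R}(X,Y)\xi$ into the transformation rule \eqref{CURVATURE} of Proposition \ref{levicivita 2}, and then rewriting the right-hand side so that it matches the $(\tilde{\kappa},\tilde{\mu},\tilde{\nu})$-pattern for the barred structure. First I would recall that, since $(M,\tilde{\varphi},\xi,\eta,\tilde{g})$ is a paracontact $(\tilde{\kappa},\tilde{\mu},\tilde{\nu})$-manifold, formula \eqref{NAMBLAFI} gives $(\tilde{\nabla}_X\tilde{\varphi})Y=-\tilde{g}(X-\tilde{h}X,Y)\xi+\eta(Y)(X-\tilde{h}X)$, so the antisymmetrized covariant-derivative term appearing in \eqref{CURVATURE} simplifies. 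Concretely,
\begin{equation*}
(\tilde{\nabla}_X\tilde{\varphi})Y-(\tilde{\nabla}_Y\tilde{\varphi})X=2\tilde{g}(\tilde{h}X,\tilde{\varphi}Y)\xi+\eta(Y)(X-\tilde{h}X)-\eta(X)(Y-\tilde{h}Y),
\end{equation*}
but after it is combined with the remaining $\eta(Y)(X-\tilde hX)-\eta(X)(Y-\tilde hY)$ factor in \eqref{CURVATURE} the non-$\xi$ contributions telescope. The key point is that once one is off the two-dimensional span involving $\xi$ the bracketed correction of order $(\alpha-1)$ collapses to a multiple of $\eta(Y)X-\eta(X)Y$ together with $\eta(Y)\tilde hX-\eta(X)\tilde hY$, which is exactly the shape needed.

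Next I would collect terms by their tensorial type. Substituting \eqref{PARAKMU} into the left side and the simplified expression into the right side of \eqref{CURVATURE}, the resulting identity is of the form
\begin{equation*}
\alpha\,\bar{R}(X,Y)\bar{\xi}=A(\eta(Y)X-\eta(X)Y)+B(\eta(Y)\tilde hX-\eta(X)\tilde hY)+C(\eta(Y)\tilde\varphi\tilde hX-\eta(X)\tilde\varphi\tilde hY),
\end{equation*}
where $A,B,C$ are explicit polynomials in $\tilde\kappa,\tilde\mu,\tilde\nu,\alpha$. One then invokes \eqref{H BAR}, namely $\bar h=\tilde h/\alpha$, to convert $\tilde h$ into $\alpha\bar h$ and hence $\tilde\varphi\tilde h=\bar\varphi\,(\alpha\bar h)=\alpha\bar\varphi\bar h$ (using $\bar\varphi=\tilde\varphi$), and uses $\bar\xi=\xi/\alpha$ to match the left side to the definition \eqref{PARAKMU} written for the barred data. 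Reading off coefficients of the three independent building blocks $\eta(Y)X-\eta(X)Y$, $\eta(Y)\bar hX-\eta(X)\bar hY$, and $\eta(Y)\bar\varphi\bar hX-\eta(X)\bar\varphi\bar hY$ yields $\bar\kappa$, $\bar\mu$, and $\bar\nu$ respectively, which after dividing by the appropriate powers of $\alpha$ should reproduce \eqref{dhomothetic1}.

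The main obstacle I expect is purely bookkeeping: correctly tracking the $\bar g$-dependence when one wants to present the final object genuinely as a paracontact $(\bar\kappa,\bar\mu,\bar\nu)$-structure, since $\bar R(X,Y)\bar\xi$ must be expressed intrinsically in terms of $\bar\eta$, $\bar h$, $\bar\varphi$ rather than the original tensors. In particular, $\bar\eta=\alpha\eta$ and $\bar\xi=\xi/\alpha$ introduce compensating factors of $\alpha$ that must be reconciled with the factor $\alpha$ multiplying $\bar R$ on the left of \eqref{CURVATURE}; a sign or power-of-$\alpha$ slip here is the likeliest source of error. The cleanest way to manage this is to first fully express everything in the \emph{tilde} tensors, determine $A,B,C$, and only at the very end perform the single substitution $\tilde h=\alpha\bar h$, $\tilde\varphi\tilde h=\alpha\bar\varphi\bar h$, $\eta=\bar\eta/\alpha$, so that the three coefficients can be matched termwise. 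Since equation \eqref{CURVATURE} is already the hard analytic input (proved in Proposition \ref{levicivita 2}), the remaining work is algebraic and routine, and the three formulas in \eqref{dhomothetic1} follow by comparison of coefficients.
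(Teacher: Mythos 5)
Your strategy is exactly the paper's: the paper proves this proposition precisely by substituting \eqref{PARAKMU} into the deformation formula \eqref{CURVATURE} of Proposition \ref{levicivita 2}, simplifying the $\tilde{\nabla}\tilde{\varphi}$ terms via \eqref{NAMBLAFI}, and reading off coefficients. However, your key intermediate formula is wrong, and as written it would sink the argument rather than merely complicate it. Antisymmetrizing \eqref{NAMBLAFI} gives
\begin{equation*}
(\tilde{\nabla}_{X}\tilde{\varphi})Y-(\tilde{\nabla}_{Y}\tilde{\varphi})X=\eta (Y)(X-\tilde{h}X)-\eta (X)(Y-\tilde{h}Y),
\end{equation*}
with \emph{no} $\xi$-component at all: the coefficient of $\xi$ is $-\tilde{g}(X-\tilde{h}X,Y)+\tilde{g}(Y-\tilde{h}Y,X)=\tilde{g}(\tilde{h}X,Y)-\tilde{g}(X,\tilde{h}Y)=0$, because $\tilde{h}$ is $\tilde{g}$-symmetric. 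Your extra term $2\tilde{g}(\tilde{h}X,\tilde{\varphi}Y)\xi$ is spurious, and this matters: a genuine term of that form, whose coefficient is not a multiple of $\eta(X)$ or $\eta(Y)$, cannot be absorbed into any of the three building blocks $\eta(Y)X-\eta(X)Y$, $\eta(Y)\tilde{h}X-\eta(X)\tilde{h}Y$, $\eta(Y)\tilde{\varphi}\tilde{h}X-\eta(X)\tilde{\varphi}\tilde{h}Y$, so its presence would mean the deformed structure is \emph{not} a $(\bar{\kappa},\bar{\mu},\bar{\nu})$-structure. Your sentence that ``the non-$\xi$ contributions telescope'' silently discards this term instead of cancelling it; the correct statement is the opposite one, namely that the $\xi$-contributions cancel identically.

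With the corrected identity the rest of your plan goes through as you describe: the $(\alpha-1)$-bracket in \eqref{CURVATURE} becomes $2(\eta(Y)X-\eta(X)Y)-2(\eta(Y)\tilde{h}X-\eta(X)\tilde{h}Y)$, hence
\begin{equation*}
\alpha \bar{R}(X,Y)\bar{\xi}=\bigl(\tilde{\kappa}+1-\alpha ^{2}\bigr)(\eta (Y)X-\eta (X)Y)+\bigl(\tilde{\mu}+2\alpha -2\bigr)(\eta (Y)\tilde{h}X-\eta (X)\tilde{h}Y)+\tilde{\nu}(\eta (Y)\tilde{\varphi}\tilde{h}X-\eta (X)\tilde{\varphi}\tilde{h}Y),
\end{equation*}
using $\tilde{\kappa}-2(\alpha-1)-(\alpha-1)^{2}=\tilde{\kappa}+1-\alpha^{2}$; then the substitutions $\eta=\bar{\eta}/\alpha$, $\tilde{h}=\alpha\bar{h}$, $\tilde{\varphi}\tilde{h}=\alpha\bar{\varphi}\bar{h}$ and division by $\alpha$ yield exactly \eqref{dhomothetic1}. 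One further caveat, which applies equally to the paper's own terse proof: \eqref{NAMBLAFI} is stated in Lemma \ref{hq} only for $\tilde{\kappa}\neq -1$, so in the case $\tilde{\kappa}=-1$ you need a separate justification for the antisymmetrized identity (in dimension three it follows unconditionally from \eqref{INTEGRABLE}).
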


By the same argument in Proposition 3.9 of \cite{CMEM}, we have

\begin{corollary}
\label{orthogonal} \label{bipara2} Let $(M,\tilde{\varphi},\xi ,\eta ,\tilde{%
g})$ be a paracontact metric $(\tilde{\kappa},\tilde{\mu},\tilde{\nu})$%
-manifold such that $\tilde{\kappa}\neq -1$. Then the operator $\tilde{h}$
in the case $\tilde{\kappa}>-1$ and the operator $\tilde{\varphi}\tilde{h}$
in the case $\tilde{\kappa}<-1$ are diagonalizable and admit three
eigenvalues: $0$, associate with the eigenvector $\xi $, $\tilde{\lambda}$
and -$\tilde{\lambda}$, of multiplicity $n$, where $\tilde{\lambda}:=\sqrt{%
|1+\tilde{\kappa}|}$. The corresponding eigendistributions ${\mathcal{D}}_{%
\tilde{h}}(0)=\mathbb{R}\xi $, ${\mathcal{D}}_{\tilde{h}}(\tilde{\lambda})$, 
${\mathcal{D}}_{\tilde{h}}(-\tilde{\lambda})$ and ${\mathcal{D}}_{\tilde{%
\varphi}\tilde{h}}(0)=\mathbb{R}\xi $, ${\mathcal{D}}_{\tilde{\varphi}\tilde{%
h}}(\tilde{\lambda})$, ${\mathcal{D}}_{\tilde{\varphi}\tilde{h}}(-\tilde{%
\lambda})$ are mutually orthogonal and one has $\tilde{\varphi}{\mathcal{D}}%
_{\tilde{h}}(\tilde{\lambda})={\mathcal{D}}_{\tilde{h}}(-\tilde{\lambda})$, $%
\tilde{\varphi}{\mathcal{D}}_{\tilde{h}}(-\tilde{\lambda})={\mathcal{D}}_{%
\tilde{h}}(\tilde{\lambda})$ and $\tilde{\varphi}{\mathcal{D}}_{\tilde{%
\varphi}\tilde{h}}(\tilde{\lambda})={\mathcal{D}}_{\tilde{\varphi}\tilde{h}%
}(-\tilde{\lambda})$, $\tilde{\varphi}{\mathcal{D}}_{\tilde{\varphi}\tilde{h}%
}(-\tilde{\lambda})={\mathcal{D}}_{\tilde{\varphi}\tilde{h}}(\tilde{\lambda}%
) $. Furthermore, 
\begin{equation}
{\mathcal{D}}_{\tilde{h}}(\pm \tilde{\lambda})=\left\{ X\pm \frac{1}{\sqrt{1+%
\tilde{\kappa}}}\tilde{h}X|X\in \Gamma ({\mathcal{D}}^{\mp })\right\}
\label{formula1}
\end{equation}%
in the case $\tilde{\kappa}>-1$, and 
\begin{equation}
{\mathcal{D}}_{\tilde{\varphi}\tilde{h}}(\pm \tilde{\lambda})=\left\{ X\pm 
\frac{1}{\sqrt{-1-\tilde{\kappa}}}\tilde{\varphi}\tilde{h}X|X\in \Gamma ({%
\mathcal{D}}^{\mp })\right\}  \label{formula2}
\end{equation}%
where ${\mathcal{D}}^{+}$ and ${\mathcal{D}}^{\text{-}}$ denote the
eigendistributions of $\tilde{\varphi}$ corresponding to the eigenvalues
corresponding to the eigenvalues $1$ and $-1$, respectively.
\end{corollary}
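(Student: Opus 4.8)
The plan is to work entirely on the contact distribution $\mathcal D=\ker\eta$, where $\tilde\varphi^{2}=I$, and to convert the algebraic identity (\ref{H2}) into statements about minimal polynomials. Restricting (\ref{H2}) to $\mathcal D$ gives $\tilde h^{2}=(1+\tilde\kappa)\,\mathrm{Id}$ on $\mathcal D$. The decisive observation — the one that separates this situation from the genuinely degenerate case $\tilde\kappa=-1$ — is that diagonalizability here will be \emph{purely algebraic}: because the metric is only pseudo-Riemannian, self-adjointness by itself does not guarantee diagonalizability, whereas an operator annihilated by a polynomial with distinct real roots is diagonalizable regardless of signature.

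Case $\tilde\kappa>-1$. Here $1+\tilde\kappa>0$, so on $\mathcal D$ the operator $\tilde h$ satisfies $\tilde h^{2}-\tilde\lambda^{2}\,\mathrm{Id}=0$ with $\tilde\lambda=\sqrt{1+\tilde\kappa}$; its minimal polynomial divides $(t-\tilde\lambda)(t+\tilde\lambda)$, which has distinct roots, so $\tilde h|_{\mathcal D}$ is diagonalizable with eigenvalues $\pm\tilde\lambda$. Since $\tilde h\xi=0$, adjoining $\mathbb R\xi$ produces the eigenvalue $0$ and the full diagonalization. For the multiplicities I would use that $\tilde h$ anti-commutes with $\tilde\varphi$: if $\tilde hX=\tilde\lambda X$ then $\tilde h(\tilde\varphi X)=-\tilde\varphi\tilde hX=-\tilde\lambda\tilde\varphi X$, so $\tilde\varphi$ maps $\mathcal D_{\tilde h}(\tilde\lambda)$ into $\mathcal D_{\tilde h}(-\tilde\lambda)$; as $\tilde\varphi$ is invertible on $\mathcal D$ (there $\tilde\varphi^{2}=\mathrm{Id}$) the two eigenspaces have equal dimension, hence each has dimension $n$. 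Orthogonality of the three eigendistributions follows from self-adjointness of $\tilde h$: if $\tilde hu=\lambda u$, $\tilde hv=\mu v$ with $\lambda\neq\mu$, then $(\lambda-\mu)\tilde g(u,v)=\tilde g(\tilde hu,v)-\tilde g(u,\tilde hv)=0$; and $\mathbb R\xi\perp\mathcal D$ since $\tilde g(\xi,\cdot)=\eta$.

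Case $\tilde\kappa<-1$. Now $\tilde h$ has no real eigenvalues on $\mathcal D$, so one passes to $\tilde\varphi\tilde h$. Using $\tilde h\tilde\varphi=-\tilde\varphi\tilde h$ together with (\ref{H2}) I compute on $\mathcal D$ that $(\tilde\varphi\tilde h)^{2}=-\tilde\varphi^{2}\tilde h^{2}=-(1+\tilde\kappa)\,\mathrm{Id}=|1+\tilde\kappa|\,\mathrm{Id}=\tilde\lambda^{2}\,\mathrm{Id}$, so the same minimal-polynomial argument yields diagonalizability with eigenvalues $\pm\tilde\lambda$ and $0$ for $\xi$. Moreover $\tilde\varphi\tilde h$ is self-adjoint, because $\tilde\varphi$ is skew-adjoint (from $d\eta(X,Y)=\tilde g(X,\tilde\varphi Y)$) and $\tilde h$ self-adjoint, so anti-commutation gives $(\tilde\varphi\tilde h)^{*}=\tilde h\tilde\varphi^{*}=-\tilde h\tilde\varphi=\tilde\varphi\tilde h$; orthogonality then follows as before. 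For the multiplicities I would note that $\tilde\varphi\tilde h$ also anti-commutes with $\tilde\varphi$ on $\mathcal D$, since there $(\tilde\varphi\tilde h)\tilde\varphi=-\tilde\varphi^{2}\tilde h=-\tilde h$ while $\tilde\varphi(\tilde\varphi\tilde h)=\tilde h$, so $\tilde\varphi$ again interchanges the two eigendistributions and forces each to have dimension $n$.

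Finally I would establish the explicit descriptions (\ref{formula1}) and (\ref{formula2}) by direct verification. For $X\in\Gamma(\mathcal D^{+})$ with $\tilde\kappa>-1$, applying $\tilde h$ to $X+\frac{1}{\tilde\lambda}\tilde hX$ and using $\tilde h^{2}X=(1+\tilde\kappa)X$ gives $\tilde h\left(X+\frac{1}{\tilde\lambda}\tilde hX\right)=\tilde\lambda\left(X+\frac{1}{\tilde\lambda}\tilde hX\right)$, so the set on the right of (\ref{formula1}) is contained in $\mathcal D_{\tilde h}(\tilde\lambda)$; since $\tilde hX\in\mathcal D^{-}$ whenever $X\in\mathcal D^{+}$, the assignment $X\mapsto X+\frac{1}{\tilde\lambda}\tilde hX$ is injective, and because $\dim\mathcal D^{+}=n=\dim\mathcal D_{\tilde h}(\tilde\lambda)$ it is onto, giving equality; the opposite sign and the statement (\ref{formula2}) for $\tilde\kappa<-1$ are identical with $\tilde\varphi\tilde h$ replacing $\tilde h$. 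The only genuine subtlety throughout is the one flagged at the outset: one must not invoke a spectral theorem for the indefinite metric, but instead read diagonalizability directly off the quadratic identity (\ref{H2}), which is available precisely because $\tilde\kappa\neq-1$.
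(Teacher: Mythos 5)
Your proof is correct. Note that the paper itself gives no argument for this corollary beyond the remark that it follows ``by the same argument in Proposition 3.9 of \cite{CMEM}''; your write-up is a self-contained reconstruction of exactly that standard argument---the identity (\ref{H2}) restricted to $\ker\eta$, anti-commutation of $\tilde{h}$ (resp.\ $\tilde{\varphi}\tilde{h}$) with $\tilde{\varphi}$ to pair the $\pm\tilde{\lambda}$ eigenspaces, self-adjointness for orthogonality, and direct verification of (\ref{formula1})--(\ref{formula2})---with the minimal-polynomial observation making explicit why diagonalizability survives the indefinite signature.
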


In the sequel, unless otherwise stated, we will always assume the index of ${%
\mathcal{D}}_{\tilde{h}}(\pm \lambda )$ (in the case $\tilde{\kappa}>-1$)
and of ${\mathcal{D}}_{\tilde{\varphi}\tilde{h}}(\pm \lambda )$ (in the case 
$\tilde{\kappa}<-1$) to be constant.

Being $\tilde{h}$ (in the case $\tilde{\kappa}>-1$) or $\tilde{\varphi}%
\tilde{h}$ (in the case $\tilde{\kappa}<-1$) diagonalizable, one can easily
prove the following lemma. Following similar steps in proof of the theorem (%
\cite{CMEM}, Lemma 3.11), we can give following lemma.

\begin{lemma}
\label{basis} Let $(M,\tilde{\varphi},\xi ,\eta ,\tilde{g})$ be a
paracontact metric $(\tilde{\kappa},\tilde{\mu},\tilde{\nu})$-manifold such
that $\tilde{\kappa}\neq -1$. If $\tilde{\kappa}>-1$ (respectively, $\tilde{%
\kappa}<-1$), then there exists a local orthogonal $\tilde{\varphi}$-basis $%
\{X_{1},\ldots ,X_{n},Y_{1},\ldots ,Y_{n},\xi \}$ of eigenvectors of $\tilde{%
h}$ (respectively, $\tilde{\varphi}\tilde{h}$) such that $X_{1},\ldots
,X_{n}\in \Gamma ({\mathcal{D}}_{\tilde{h}}(\tilde{\lambda}))$
(respectively, $\Gamma ({\mathcal{D}}_{\tilde{\varphi}\tilde{h}}(\tilde{%
\lambda}))$), $Y_{1},\ldots ,Y_{n}\in \Gamma ({\mathcal{D}}_{\tilde{h}}(-%
\tilde{\lambda}))$ (respectively, $\Gamma ({\mathcal{D}}_{\tilde{\varphi}%
\tilde{h}}(-\tilde{\lambda}))$), and 
\begin{equation}
\tilde{g}(X_{i},X_{i})=-\tilde{g}(Y_{i},Y_{i})=\left\{ 
\begin{array}{ll}
1, & \hbox{for $1 \leq i \leq r$} \\ 
-1, & \hbox{for $r+1 \leq i \leq r+s$}%
\end{array}%
\right.  \label{sign}
\end{equation}%
where $r=\emph{index}({\mathcal{D}}_{\tilde{h}}(-\tilde{\lambda}))$
(respectively, $r=\emph{index}({\mathcal{D}}_{\tilde{\varphi}\tilde{h}}(-%
\tilde{\lambda}))$) and $s=n-r=\emph{index}({\mathcal{D}}_{\tilde{h}}(\tilde{%
\lambda}))$ (respectively, $s=\emph{index}({\mathcal{D}}_{\tilde{\varphi}%
\tilde{h}}(\tilde{\lambda}))$).
\end{lemma}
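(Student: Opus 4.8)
The plan is to construct the basis directly from the eigenspace decomposition supplied by Corollary~\ref{orthogonal}, doing the case $\tilde{\kappa}>-1$ in full and then transcribing it to $\tilde{\kappa}<-1$. By Corollary~\ref{orthogonal} the operator $\tilde{h}$ is diagonalizable with the three mutually orthogonal eigendistributions $\mathbb{R}\xi$, ${\mathcal{D}}_{\tilde{h}}(\tilde{\lambda})$, ${\mathcal{D}}_{\tilde{h}}(-\tilde{\lambda})$, the last two of dimension $n$, and $\tilde{\varphi}$ interchanges ${\mathcal{D}}_{\tilde{h}}(\tilde{\lambda})$ and ${\mathcal{D}}_{\tilde{h}}(-\tilde{\lambda})$. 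Since the eigenvalues are constant, these are smooth distributions, and the standing assumption that their index is constant is what allows every frame construction below to be carried out smoothly on a neighbourhood of each point.

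The first step is to verify that $\tilde{g}$ restricted to ${\mathcal{D}}_{\tilde{h}}(\tilde{\lambda})$ is non-degenerate. If some $X\in{\mathcal{D}}_{\tilde{h}}(\tilde{\lambda})$ were $\tilde{g}$-orthogonal to all of ${\mathcal{D}}_{\tilde{h}}(\tilde{\lambda})$, then, being already orthogonal to ${\mathcal{D}}_{\tilde{h}}(-\tilde{\lambda})$ and to $\xi$ by Corollary~\ref{orthogonal}, it would lie in the radical of the non-degenerate metric $\tilde{g}$ on $TM=\,{\mathcal{D}}_{\tilde{h}}(\tilde{\lambda})\oplus{\mathcal{D}}_{\tilde{h}}(-\tilde{\lambda})\oplus\mathbb{R}\xi$, forcing $X=0$. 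Hence the restriction is non-degenerate and ${\mathcal{D}}_{\tilde{h}}(\tilde{\lambda})$ admits a $\tilde{g}$-orthonormal frame.

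The second step records how $\tilde{\varphi}$ transports the metric. For $X,X'\in{\mathcal{D}}_{\tilde{h}}(\tilde{\lambda})$ we have $\eta(X)=\eta(X')=0$, so \eqref{G METRIC} gives $\tilde{g}(\tilde{\varphi}X,\tilde{\varphi}X')=-\tilde{g}(X,X')$; thus $\tilde{\varphi}\colon{\mathcal{D}}_{\tilde{h}}(\tilde{\lambda})\to{\mathcal{D}}_{\tilde{h}}(-\tilde{\lambda})$ is an anti-isometry. Consequently a positive direction of $\tilde{g}$ on ${\mathcal{D}}_{\tilde{h}}(\tilde{\lambda})$ is sent to a negative direction on ${\mathcal{D}}_{\tilde{h}}(-\tilde{\lambda})$ and conversely, so the number of positive directions on ${\mathcal{D}}_{\tilde{h}}(\tilde{\lambda})$ equals $r:=\mathrm{index}({\mathcal{D}}_{\tilde{h}}(-\tilde{\lambda}))$, while the number of negative directions equals $s:=n-r=\mathrm{index}({\mathcal{D}}_{\tilde{h}}(\tilde{\lambda}))$. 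I would then apply Gram--Schmidt to produce a local orthonormal frame $\{X_{1},\dots,X_{n}\}$ of ${\mathcal{D}}_{\tilde{h}}(\tilde{\lambda})$ ordered so that $\tilde{g}(X_{i},X_{i})=1$ for $1\le i\le r$ and $\tilde{g}(X_{i},X_{i})=-1$ for $r+1\le i\le r+s$, and set $Y_{i}:=\tilde{\varphi}X_{i}$. Because $\tilde{\varphi}$ is an anti-isometry interchanging the two eigendistributions, the $Y_{i}$ form an orthonormal frame of ${\mathcal{D}}_{\tilde{h}}(-\tilde{\lambda})$ with $\tilde{g}(Y_{i},Y_{i})=-\tilde{g}(X_{i},X_{i})$, while $\tilde{g}(X_{i},Y_{j})=0$ for all $i,j$ by orthogonality of the eigendistributions. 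This gives exactly \eqref{sign} and shows $\{X_{1},\dots,X_{n},Y_{1},\dots,Y_{n},\xi\}$ is the required $\tilde{\varphi}$-basis of eigenvectors.

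For $\tilde{\kappa}<-1$ the three steps go through verbatim with $\tilde{h}$ replaced by $\tilde{\varphi}\tilde{h}$ and the eigendistributions ${\mathcal{D}}_{\tilde{\varphi}\tilde{h}}(\pm\tilde{\lambda})$: Corollary~\ref{orthogonal} again supplies orthogonality and the interchange by $\tilde{\varphi}$, and \eqref{G METRIC} again yields the anti-isometry. I expect the only genuinely delicate point to be the index bookkeeping in the second step, namely confirming that the count of positive directions of $\tilde{g}$ on the $\tilde{\lambda}$-eigenspace is precisely $\mathrm{index}({\mathcal{D}}_{\tilde{h}}(-\tilde{\lambda}))$ so that the signs in \eqref{sign} appear as stated, together with checking that the constant-index hypothesis indeed renders the ordered Gram--Schmidt frame smooth rather than merely pointwise.
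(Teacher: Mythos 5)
Your proposal is correct and follows essentially the same route as the paper, which simply invokes the diagonalizability and orthogonality statements of Corollary~\ref{orthogonal} (deferring the routine details to the analogous Lemma~3.11 of \cite{CMEM}): one builds an orthonormal frame of ${\mathcal{D}}_{\tilde{h}}(\tilde{\lambda})$ (resp.\ ${\mathcal{D}}_{\tilde{\varphi}\tilde{h}}(\tilde{\lambda})$) and pushes it to the opposite eigendistribution by the anti-isometry $\tilde{\varphi}$, the constant-index assumption guaranteeing smoothness. Your filling-in of the non-degeneracy of the restricted metric and the index bookkeeping via the anti-isometry property $\tilde{g}(\tilde{\varphi}X,\tilde{\varphi}Y)=-\tilde{g}(X,Y)$ on $\ker\eta$ is exactly the intended argument.
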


\begin{lemma}
\label{relation k,mu,vu}The following differential equation is satisfied on
every $(2n+1)$-dimensional paracontact metric $(\tilde{\kappa},\tilde{\mu},%
\tilde{\nu})$-manifold $(M,\tilde{\varphi},\xi ,\eta ,\tilde{g})$ :%
\begin{eqnarray}
0 &=&\xi (\tilde{\kappa})(\eta (Y)X-\eta (X)Y)+\xi (\tilde{\mu})(\eta (Y)%
\tilde{h}X-\eta (X)\tilde{h}Y)+\xi (\tilde{\nu})(\eta (Y)\tilde{\varphi}%
\tilde{h}X-\eta (X)\tilde{\varphi}\tilde{h}Y)  \label{DIFEQUATION} \\
&&+X(\tilde{\kappa})\tilde{\varphi}^{2}Y-Y(\tilde{\kappa})\tilde{\varphi}%
^{2}X+X(\tilde{\mu})\tilde{h}Y-Y(\tilde{\mu})\tilde{h}X+X(\tilde{\nu})\tilde{%
\varphi}\tilde{h}Y-Y(\tilde{\nu})\tilde{\varphi}\tilde{h}X.  \notag
\end{eqnarray}
\end{lemma}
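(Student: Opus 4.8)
The plan is to read off the identity from the second Bianchi identity, which is the natural source of first-order relations among the structure functions: the algebraic (first) Bianchi identity involves no derivatives and so can never produce terms such as $X(\tilde{\kappa})$ or $\xi(\tilde{\mu})$, so one is forced to differentiate the curvature. Concretely, I would take the second Bianchi identity
\[
(\tilde{\nabla}_{X}\tilde{R})(Y,Z)W+(\tilde{\nabla}_{Y}\tilde{R})(Z,X)W+(\tilde{\nabla}_{Z}\tilde{R})(X,Y)W=0
\]
and specialise it to $Z=W=\xi$, which gives
\begin{equation}
(\tilde{\nabla}_{X}\tilde{R})(Y,\xi)\xi+(\tilde{\nabla}_{Y}\tilde{R})(\xi,X)\xi+(\tilde{\nabla}_{\xi}\tilde{R})(X,Y)\xi=0. \tag{$\star$}
\end{equation}
This particular specialisation is chosen so that after expansion every curvature expression is of the type $\tilde{R}(\,\cdot\,,\,\cdot\,)\xi$ or $\tilde{R}(\xi,\,\cdot\,)\,\cdot$, hence directly replaceable by the defining relation \eqref{PARAKMU} and by \eqref{RZETAXY}, while the facts $\tilde{\nabla}_{\xi}\xi=0$ and $\tilde{\nabla}_{\xi}\eta=0$ keep the third summand clean.

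Next I would expand the three summands of $(\star)$ by the Leibniz rule, substituting $\tilde{\nabla}_{X}\xi=-\tilde{\varphi}X+\tilde{\varphi}\tilde{h}X$ from \eqref{nablaxi} and the operator identity $\tilde{R}(Y,\xi)\xi=\tilde{\kappa}\tilde{\varphi}^{2}Y+\tilde{\mu}\tilde{h}Y+\tilde{\nu}\tilde{\varphi}\tilde{h}Y$, which follows from \eqref{PARAKMU} together with $\tilde{h}\xi=0$ and $\tilde{\varphi}^{2}=I-\eta\otimes\xi$. Sorting the resulting terms into those carrying a derivative of one of $\tilde{\kappa},\tilde{\mu},\tilde{\nu}$ and those not, the former assemble exactly into the right-hand side of \eqref{DIFEQUATION}: the summand $(\tilde{\nabla}_{\xi}\tilde{R})(X,Y)\xi$ contributes the $\xi(\tilde{\kappa})$, $\xi(\tilde{\mu})$, $\xi(\tilde{\nu})$ line, and the summands $(\tilde{\nabla}_{X}\tilde{R})(Y,\xi)\xi$ and $(\tilde{\nabla}_{Y}\tilde{R})(\xi,X)\xi$ contribute the $X(\tilde{\kappa})\tilde{\varphi}^{2}Y-Y(\tilde{\kappa})\tilde{\varphi}^{2}X+\cdots$ line. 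This step is immediate once the bookkeeping of which slot is differentiated is set up.

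Finally I would show that the entire remaining part of $(\star)$ — the terms in which no structure function is differentiated — cancels identically, so that $(\star)$ collapses to \eqref{DIFEQUATION}. Here one replaces $\tilde{\nabla}_{\xi}\tilde{h}$ and $\tilde{\nabla}_{\xi}\tilde{\varphi}\tilde{h}$ by \eqref{NMBLA ZETAH}, the skew parts of $\tilde{\nabla}\tilde{h}$ and $\tilde{\nabla}\tilde{\varphi}\tilde{h}$ by \eqref{NAMLA X H} and \eqref{NAMLAFIH}, the derivative $\tilde{\nabla}\tilde{\varphi}$ by \eqref{NAMBLAFI}, and the leftover curvature terms $\tilde{R}(Y,\tilde{\nabla}_{X}\xi)\xi$ and $\tilde{R}(Y,\xi)\tilde{\nabla}_{X}\xi$ by \eqref{PARAKMU} and \eqref{RZETAXY}; the quadratic coefficients $\tilde{\mu}^{2},\tilde{\mu}\tilde{\nu},\tilde{\nu}^{2}$ and the factors $1+\tilde{\kappa}$ that appear are then reduced using \eqref{H2} and $\tilde{h}\tilde{\varphi}=-\tilde{\varphi}\tilde{h}$. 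To keep the computation manageable I would evaluate $(\star)$ at a point with $X,Y$ extended so that $\tilde{\nabla}X=\tilde{\nabla}Y=0$ there (killing the terms $\tilde{R}(\tilde{\nabla}_{X}Y,\xi)\xi$ and their analogues) and verify the bilinear identity case by case, taking $X,Y\in\mathcal{D}$ and then $X=\xi$; where $\tilde{\kappa}\neq-1$ one may use the $\tilde{h}$-eigenbasis of Lemma \ref{basis} and pass to $\tilde{\kappa}=-1$ by continuity. I expect this cancellation to be the main obstacle and essentially the whole content of the proof: it is a nontrivial bilinear identity encoding the compatibility of \eqref{PARAKMU} with the structure equations, in contrast with the previous step where the coefficient-derivative terms are read off directly.
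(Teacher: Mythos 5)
Your proposal follows essentially the same route as the paper: the paper also writes the second Bianchi identity for $(\tilde{\nabla}_{\cdot}\tilde{R})(\cdot,\cdot)\xi$, expands it using $\tilde{\nabla}\xi =-\tilde{\varphi}+\tilde{\varphi}\tilde{h}$ and the defining relation (\ref{PARAKMU}), sets the third slot equal to $\xi$, and then substitutes (\ref{NAMLA X H}), (\ref{NAMLAFIH}) and (\ref{RZETAXY}) so that the non-derivative terms cancel and only (\ref{DIFEQUATION}) survives. The one caveat is your closing remark about treating $\tilde{\kappa}=-1$ ``by continuity'' from the eigenbasis case: this fails when $\tilde{\kappa}\equiv -1$ on an open set (which does occur, e.g.\ when $\tilde{h}^{2}=0$ with $\tilde{h}\neq 0$), but your main argument by direct substitution of the identities of Lemma \ref{hq} does not need that detour, exactly as in the paper.
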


\begin{proof}
Differentiating (\ref{PARAKMU}) along an arbitary vector field $Z$ and using
the relation $\tilde{\nabla}\xi =-\tilde{\varphi}+\tilde{\varphi}\tilde{h}$
we find%
\begin{eqnarray*}
\tilde{\nabla}_{Z}\tilde{R}(X,Y)\xi &=&Z(\tilde{\kappa})(\eta (Y)X-\eta
(X)Y)+Z(\tilde{\mu})(\eta (Y)\tilde{h}X-\eta (X)\tilde{h}Y)+Z(\tilde{\nu}%
)(\eta (Y)\tilde{\varphi}\tilde{h}X-\eta (X)\tilde{\varphi}\tilde{h}Y) \\
&&+\tilde{\kappa}[(\eta (\tilde{\nabla}_{Z}Y)-\tilde{g}(Y,\tilde{\varphi}Z)+%
\tilde{g}(Y,\tilde{\varphi}\tilde{h}Z))X+\eta (Y)\tilde{\nabla}_{Z}X \\
&&+(-\eta (\tilde{\nabla}_{Z}X)+\tilde{g}(X,\tilde{\varphi}Z)-\tilde{g}(X,%
\tilde{\varphi}\tilde{h}Z))Y-\eta (X)\tilde{\nabla}_{Z}Y] \\
&&+\tilde{\mu}[(\eta (\tilde{\nabla}_{Z}Y)-\tilde{g}(Y,\tilde{\varphi}Z)+%
\tilde{g}(Y,\tilde{\varphi}\tilde{h}Z))\tilde{h}X+\eta (Y)\tilde{\nabla}_{Z}%
\tilde{h}X \\
&&+(-\eta (\tilde{\nabla}_{Z}X)+\tilde{g}(X,\tilde{\varphi}Z)-\tilde{g}(X,%
\tilde{\varphi}\tilde{h}Z))\tilde{h}Y-\eta (X)\tilde{\nabla}_{Z}\tilde{h}Y]
\\
&&+\tilde{\nu}[(\eta (\tilde{\nabla}_{Z}Y)-\tilde{g}(Y,\tilde{\varphi}Z)+%
\tilde{g}(Y,\tilde{\varphi}\tilde{h}Z))\tilde{\varphi}\tilde{h}X+\eta (Y)%
\tilde{\nabla}_{Z}\tilde{\varphi}\tilde{h}X \\
&&+(-\eta (\tilde{\nabla}_{Z}X)+\tilde{g}(X,\tilde{\varphi}Z)-\tilde{g}(X,%
\tilde{\varphi}\tilde{h}Z))\tilde{\varphi}\tilde{h}Y-\eta (X)\tilde{\nabla}%
_{Z}\tilde{\varphi}\tilde{h}Y].
\end{eqnarray*}%
By using the relation $\tilde{\nabla}\xi =-\tilde{\varphi}+\tilde{\varphi}%
\tilde{h},$ we deduce%
\begin{eqnarray*}
(\tilde{\nabla}_{Z}\tilde{R})(X,Y,\xi ) &=&\tilde{\nabla}_{Z}\tilde{R}%
(X,Y)\xi -\tilde{R}(\tilde{\nabla}_{Z}X,Y)\xi -\tilde{R}(X,\tilde{\nabla}%
_{Z}Y)\xi -\tilde{R}(X,Y)\tilde{\nabla}_{Z}\xi \\
&=&Z(\tilde{\kappa})(\eta (Y)X-\eta (X)Y)+Z(\tilde{\mu})(\eta (Y)\tilde{h}%
X-\eta (X)\tilde{h}Y)+Z(\tilde{\nu})(\eta (Y)\tilde{\varphi}\tilde{h}X-\eta
(X)\tilde{\varphi}\tilde{h}Y) \\
&&+\tilde{\kappa}[\tilde{g}(Y,-\tilde{\varphi}Z+\tilde{\varphi}\tilde{h}Z)X+%
\tilde{g}(X,\tilde{\varphi}Z-\tilde{\varphi}\tilde{h}Z)Y] \\
&&+\tilde{\mu}[\tilde{g}(Y,-\tilde{\varphi}Z+\tilde{\varphi}\tilde{h}Z)%
\tilde{h}X+\tilde{g}(X,\tilde{\varphi}Z-\tilde{\varphi}\tilde{h}Z)\tilde{h}Y
\\
&&+\eta (Y)(\tilde{\nabla}_{Z}\tilde{h})X-\eta (X)(\tilde{\nabla}_{Z}\tilde{h%
})Y] \\
&&+\tilde{\nu}[\tilde{g}(Y,-\tilde{\varphi}Z+\tilde{\varphi}\tilde{h}Z)%
\tilde{\varphi}\tilde{h}X+\tilde{g}(X,\tilde{\varphi}Z-\tilde{\varphi}\tilde{%
h}Z))\tilde{\varphi}\tilde{h}Y \\
&&+\eta (Y)(\tilde{\nabla}_{Z}\tilde{\varphi}\tilde{h})X-\eta (X)(\tilde{%
\nabla}_{Z}\tilde{\varphi}\tilde{h})Y]+\tilde{R}(X,Y)\tilde{\varphi}Z-\tilde{%
R}(X,Y)\tilde{\varphi}\tilde{h}Z.
\end{eqnarray*}%
Using the second Bianchi identity in the last relation, we obtain%
\begin{eqnarray*}
0 &=&Z(\tilde{\kappa})(\eta (Y)X-\eta (X)Y)+Z(\tilde{\mu})(\eta (Y)\tilde{h}%
X-\eta (X)\tilde{h}Y)+Z(\tilde{\nu})(\eta (Y)\tilde{\varphi}\tilde{h}X-\eta
(X)\tilde{\varphi}\tilde{h}Y) \\
&&+X(\tilde{\kappa})(\eta (Z)Y-\eta (Y)Z)+X(\tilde{\mu})(\eta (Z)\tilde{h}%
Y-\eta (Y)\tilde{h}Z)+X(\tilde{\nu})(\eta (Z)\tilde{\varphi}\tilde{h}Y-\eta
(Y)\tilde{\varphi}\tilde{h}Z) \\
&&+Y(\tilde{\kappa})(\eta (X)Z-\eta (Z)X)+Y(\tilde{\mu})(\eta (X)\tilde{h}%
Z-\eta (Z)\tilde{h}X)+Y(\tilde{\nu})(\eta (X)\tilde{\varphi}\tilde{h}Z-\eta
(Z)\tilde{\varphi}\tilde{h}X) \\
&&+2\tilde{\kappa}[\tilde{g}(\tilde{\varphi}Y,Z)X+\tilde{g}(\tilde{\varphi}%
Z,X)Y+\tilde{g}(\tilde{\varphi}X,Y)Z] \\
&&+\tilde{\mu}[2\tilde{g}(\tilde{\varphi}Y,Z)\tilde{h}X+\eta (Z)((\tilde{%
\nabla}_{X}\tilde{h})Y-(\tilde{\nabla}_{Y}\tilde{h})X)+2\tilde{g}(\tilde{%
\varphi}Z,X)\tilde{h}Y+\eta (X)((\tilde{\nabla}_{Y}\tilde{h})Z-(\tilde{\nabla%
}_{Z}\tilde{h})Y) \\
&&+2\tilde{g}(\tilde{\varphi}X,Y)\tilde{h}Z+\eta (Y)((\tilde{\nabla}_{Z}%
\tilde{h})X-(\tilde{\nabla}_{X}\tilde{h})Z)] \\
&&+\tilde{\nu}[2\tilde{g}(\tilde{\varphi}Y,Z)\tilde{\varphi}\tilde{h}X+\eta
(Z)((\tilde{\nabla}_{X}\tilde{\varphi}\tilde{h})Y-(\tilde{\nabla}_{Y}\tilde{%
\varphi}\tilde{h})X)+2\tilde{g}(\tilde{\varphi}Z,X)\tilde{\varphi}\tilde{h}%
Y+\eta (X)((\tilde{\nabla}_{Y}\tilde{\varphi}\tilde{h})Z-(\tilde{\nabla}_{Z}%
\tilde{\varphi}\tilde{h})Y) \\
&&+2\tilde{g}(\tilde{\varphi}X,Y)\tilde{\varphi}\tilde{h}Z+\eta (Y)((\tilde{%
\nabla}_{Z}\tilde{\varphi}\tilde{h})X-(\tilde{\nabla}_{X}\tilde{\varphi}%
\tilde{h})Z)] \\
&&+\tilde{R}(X,Y)\tilde{\varphi}Z+\tilde{R}(Y,Z)\tilde{\varphi}X+\tilde{R}%
(Z,X)\tilde{\varphi}Y \\
&&-\tilde{R}(X,Y)\tilde{\varphi}\tilde{h}Z-\tilde{R}(Y,Z)\tilde{\varphi}%
\tilde{h}X-\tilde{R}(Z,X)\tilde{\varphi}\tilde{h}Y.
\end{eqnarray*}%
for all $X,Y,Z\in \Gamma (TM)$. Putting $\xi $ instead of $Z$ in the last
relation, we obtain%
\begin{eqnarray*}
0 &=&\xi (\tilde{\kappa})(\eta (Y)X-\eta (X)Y)+\xi (\tilde{\mu})(\eta (Y)%
\tilde{h}X-\eta (X)\tilde{h}Y)+\xi (\tilde{\nu})(\eta (Y)\tilde{\varphi}%
\tilde{h}X-\eta (X)\tilde{\varphi}\tilde{h}Y) \\
&&+X(\tilde{\kappa})Y-\eta (Y)\xi +X(\tilde{\mu})\tilde{h}Y+X(\tilde{\nu})%
\tilde{\varphi}\tilde{h}Y \\
&&+Y(\tilde{\kappa})(\eta (X)\xi -X)-Y(\tilde{\mu})\tilde{h}X-Y(\tilde{\nu})%
\tilde{\varphi}\tilde{h}X+2\tilde{\kappa}\tilde{g}(\tilde{\varphi}X,Y)\xi \\
&&+\tilde{\mu}[(\tilde{\nabla}_{X}\tilde{h})Y-(\tilde{\nabla}_{Y}\tilde{h}%
)X+\eta (X)((\tilde{\nabla}_{Y}\tilde{h})\xi -(\tilde{\nabla}_{\xi }\tilde{h}%
)Y)+\eta (Y)((\tilde{\nabla}_{\xi }\tilde{h})X-(\tilde{\nabla}_{X}\tilde{h}%
)\xi )] \\
&&+\tilde{\nu}[(\tilde{\nabla}_{X}\tilde{\varphi}\tilde{h})Y-(\tilde{\nabla}%
_{Y}\tilde{\varphi}\tilde{h})X+\eta (X)((\tilde{\nabla}_{Y}\tilde{\varphi}%
\tilde{h})\xi -(\tilde{\nabla}_{\xi }\tilde{\varphi}\tilde{h})Y) \\
&&+\eta (Y)((\tilde{\nabla}_{\xi }\tilde{\varphi}\tilde{h})X-(\tilde{\nabla}%
_{X}\tilde{\varphi}\tilde{h})\xi )] \\
&&+\tilde{R}(Y,\xi )\tilde{\varphi}X+\tilde{R}(\xi ,X)\tilde{\varphi}Y-%
\tilde{R}(Y,\xi )\tilde{\varphi}\tilde{h}X-\tilde{R}(\xi ,X)\tilde{\varphi}%
\tilde{h}Y.
\end{eqnarray*}%
Using (\ref{NAMLA X H}),(\ref{NAMLAFIH}) and (\ref{RZETAXY}) in the last
relation we finally get (\ref{DIFEQUATION}) and it completes the proof.
\end{proof}

By Corollary 3.6 and Lemma 3.7, one can obtain the proof of following
theorem by using a similar method of (\cite{KMP}, Theorem 4.1).

\begin{theorem}
\label{tson}Let $(M,\tilde{\varphi},\xi ,\eta ,\tilde{g})$ be a $2n+1$%
-dimensional paracontact metric $(\tilde{\kappa},\tilde{\mu},\tilde{\nu})$%
-manifold with $\tilde{\kappa}\neq -1$ and $n>1$. Then $M$ is a paracontact
metric $(\tilde{\kappa},\tilde{\mu})$-manifold, i.e $\ \tilde{\kappa},\tilde{%
\mu}$ are constants and $\tilde{\nu}$ is the zero function.
\end{theorem}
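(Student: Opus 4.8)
The plan is to reduce everything to the single differential identity \eqref{DIFEQUATION} of Lemma \ref{relation k,mu,vu} and to exploit the extra room available once $n>1$. Assume first $\tilde{\kappa}>-1$; the case $\tilde{\kappa}<-1$ is identical after replacing $\tilde{h}$ by $\tilde{\varphi}\tilde{h}$ and \eqref{formula1} by \eqref{formula2}. By Corollary \ref{orthogonal} and Lemma \ref{basis} I would fix a local $\tilde{\varphi}$-basis $\{X_{1},\dots,X_{n},Y_{1},\dots,Y_{n},\xi\}$ of eigenvectors of $\tilde{h}$, with $\tilde{h}X_{i}=\tilde{\lambda}X_{i}$, $\tilde{h}Y_{i}=-\tilde{\lambda}Y_{i}$, $Y_{i}=\tilde{\varphi}X_{i}$, and $\tilde{\lambda}=\sqrt{1+\tilde{\kappa}}\neq 0$.

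First I would feed pairs of these eigenvectors into \eqref{DIFEQUATION}. Taking $X=X_{i}$, $Y=X_{j}$ with $i\neq j$ (possible precisely because $n>1$), all $\eta$-terms drop out and, using $\tilde{\varphi}^{2}X_{k}=X_{k}$ and $\tilde{\varphi}\tilde{h}X_{k}=\tilde{\lambda}Y_{k}$, the identity collapses to
\[
0=(X_{i}(\tilde{\kappa})+\tilde{\lambda}X_{i}(\tilde{\mu}))X_{j}-(X_{j}(\tilde{\kappa})+\tilde{\lambda}X_{j}(\tilde{\mu}))X_{i}+\tilde{\lambda}X_{i}(\tilde{\nu})Y_{j}-\tilde{\lambda}X_{j}(\tilde{\nu})Y_{i}.
\]
Since $\{X_{i},X_{j},Y_{i},Y_{j}\}$ are four distinct, hence linearly independent, basis vectors, reading off the coefficients of $Y_{i},Y_{j}$ gives $X_{i}(\tilde{\nu})=X_{j}(\tilde{\nu})=0$, while those of $X_{i},X_{j}$ give $X_{k}(\tilde{\kappa})+\tilde{\lambda}X_{k}(\tilde{\mu})=0$. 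Repeating with $X=X_{i}$, $Y=Y_{j}$, and with $X=Y_{i}$, $Y=Y_{j}$ ($i\neq j$), produces the companion relation $X_{k}(\tilde{\kappa})-\tilde{\lambda}X_{k}(\tilde{\mu})=0$ together with the analogues for the $Y_{k}$ and $Y_{k}(\tilde{\nu})=0$. As $\tilde{\lambda}\neq 0$, these combine to $X_{k}(\tilde{\kappa})=X_{k}(\tilde{\mu})=X_{k}(\tilde{\nu})=0$ and $Y_{k}(\tilde{\kappa})=Y_{k}(\tilde{\mu})=Y_{k}(\tilde{\nu})=0$ for every $k$; that is, $\tilde{\kappa},\tilde{\mu},\tilde{\nu}$ are annihilated by the whole contact distribution $\mathcal{D}=\ker\eta$.

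It remains to control the $\xi$-direction and to force $\tilde{\nu}=0$; here \eqref{DIFEQUATION} is of no help, since putting $X=\xi$ in it merely returns $0=0$. Instead I would argue non-holonomically. For any $\phi\in\{\tilde{\kappa},\tilde{\mu},\tilde{\nu}\}$ the previous step gives $d\phi=\xi(\phi)\eta$. From \eqref{nablaxi} one has $\tilde{\nabla}_{\xi}\xi=0$ and $\eta(\tilde{\nabla}_{X}\xi)=0$, whence $\eta([X,\xi])=0$, i.e. $[X,\xi]\in\Gamma(\mathcal{D})$ for every $X\in\Gamma(\mathcal{D})$; consequently $X(\xi(\phi))=\xi(X\phi)+[X,\xi]\phi=0$, so $\xi(\phi)$ is again $\mathcal{D}$-constant and $d(\xi(\phi))\wedge\eta=0$. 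Applying $d$ to $d\phi=\xi(\phi)\eta$ then yields $\xi(\phi)\,d\eta=0$, and since $d\eta$ is nondegenerate on $\mathcal{D}$ we conclude $\xi(\phi)=0$. Thus $\tilde{\kappa},\tilde{\mu},\tilde{\nu}$ are all constant; finally \eqref{ZETAK VU1} reads $0=\xi(\tilde{\kappa})=-2\tilde{\nu}(1+\tilde{\kappa})$, and since $\tilde{\kappa}\neq -1$ this forces $\tilde{\nu}=0$, which is the assertion.

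The main obstacle is the last step. The identity \eqref{DIFEQUATION} is a relation among vector fields that only detects directions inside $\mathcal{D}$, so it cannot by itself see the $\xi$-derivatives or the vanishing of $\tilde{\nu}$; the essential extra ingredient is that $\mathcal{D}$ is bracket-generating (equivalently $d\eta|_{\mathcal{D}}$ is nondegenerate), which is exactly what upgrades ``constant along $\mathcal{D}$'' to genuine constancy. The remaining work—bookkeeping which terms of \eqref{DIFEQUATION} survive, and verifying that the relevant quadruples of basis vectors are independent (this is where $n>1$ is indispensable)—is routine but must be carried out attentively.
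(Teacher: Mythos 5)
Your proposal is correct and is essentially the paper's own proof: the paper merely invokes Corollary \ref{orthogonal} and Lemma \ref{relation k,mu,vu} together with the method of (\cite{KMP}, Theorem 4.1), which is exactly your three steps --- substituting pairs of $\tilde{h}$- (resp. $\tilde{\varphi}\tilde{h}$-) eigenvectors into \eqref{DIFEQUATION} to kill all derivatives of $\tilde{\kappa},\tilde{\mu},\tilde{\nu}$ along $\ker \eta$ (this is where $n>1$ enters), using the contact condition ($d\eta$ nondegenerate on $\ker \eta$, exploited through $d^{2}=0$ applied to $d\phi =\xi (\phi )\eta$) to upgrade to genuine constancy, and then \eqref{ZETAK VU1} with $\tilde{\kappa}\neq -1$ to force $\tilde{\nu}=0$. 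The only cosmetic inaccuracy is your claim that the case $\tilde{\kappa}<-1$ is ``identical'': there the basis diagonalizes $\tilde{\varphi}\tilde{h}$, so in the intermediate coefficient relations the roles of $\tilde{\mu}$ and $\tilde{\nu}$ swap, but the bookkeeping goes through unchanged and the conclusion is the same.
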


\begin{remark}
\label{rem1}By (\ref{H2}), for $\tilde{\kappa}=-1$ we obtain $\tilde{h}%
^{2}=0.$ Since the metric $\tilde{g}$ is not positive definite we can not
conclude that $\tilde{h}=0$ and the manifold is para-Sasakian. The following
question comes up naturally. Do there exist a paracontact metric $(-1,\tilde{%
\mu})$ manifold with dimension greater than $3$ satisfying $\tilde{h}^{2}=0$
but $\tilde{h}\neq 0$? B. Cappelletti Montano and L. Di Terlizzi \cite{MOTE}
\ and V. Martin-Molina \cite{VMM} gave an affirmative answer for $n>1.$
\end{remark}

In $(\kappa ,\mu ,\nu $ )-contact metric case, T.Koufogiorgos et al. \cite%
{KMP} proved that if dimension greater than $3$ then it is either a Sasakian
manifold or a $(\kappa ,\mu )$-contact metric manifold. Now we will give an
example of paracontact metric $(-1,\tilde{\mu},\tilde{\nu}\neq 0)$-manifold
for five dimensional which have no contact metric counterpart $(\tilde{h}%
^{2}=0$ but $\tilde{h}\neq 0)$. This is first example of this type manifold .

\begin{example}
\label{ex5dim}Let $\mathfrak{g}$ be the $5$-dimensional Lie algebra with
basis $X_{1},X_{2},Y_{1},Y_{2},\xi $ and Lie brackets defined by$\ \ \ $%
\begin{equation*}
\begin{array}{cc}
\left[ \xi ,X_{1}\right] =Y_{1}~, & ~\left[ \xi ,Y_{1}\right] =0 \\ 
\left[ \xi ,X_{2}\right] =Y_{2}~~,~~~ & \left[ \xi ,X_{2}\right] =Y_{2}~~~~~
\\ 
\left[ X_{1},Y_{1}\right] =2\xi -Y_{1}~~, & ~~~\left[ X_{2},Y_{2}\right] =%
\frac{2}{3}Y_{1}-Y_{2}-2\xi \\ 
\left[ X_{1},X_{2}\right] =-\frac{1}{3}X_{1}-\frac{1}{3}X_{2}, & ~\left[
Y_{1},Y_{2}\right] =0 \\ 
\left[ X_{1},Y_{2}\right] =-\frac{2}{3}Y_{1}-\frac{1}{3}Y_{2}, & ~\left[
Y_{1},X_{2}\right] =\frac{1}{3}Y_{1}.%
\end{array}%
\end{equation*}
\end{example}

Let $\mathcal{G}$ be a Lie group whose Lie algebra is $\mathfrak{g}$. On $%
\mathcal{G}$ we define a left-invariant paracontact metric structure $(%
\tilde{\varphi},\xi ,\eta ,\tilde{g})$ by setting $\tilde{\varphi}\xi =0$
and $\tilde{\varphi}X_{i}=X_{i},~\tilde{\varphi}Y_{i}=-Y_{i},\eta (\xi
)=1,\eta (X_{i})=\eta (Y_{i})=0$ and whose only non-vanishing components of
the metric are $~\tilde{g}(\xi ,\xi )=\tilde{g}(X_{1},Y_{1})=1,~\tilde{g}%
(X_{2},Y_{2})=-1$ for all$~i=1,2.$ Therefore, $\tilde{h}X_{i}=Y_{i}$ and $~%
\tilde{h}Y_{i}=0,~i=1,2$, so $\tilde{h}^{2}=0$ but $\tilde{h}\neq 0$ and
rank($\tilde{h}$)$=2.$Moreover, one can verify that ($\mathcal{G},\tilde{%
\varphi},\xi ,\eta ,\tilde{g})$ is a paracontact metric $(\tilde{\kappa},%
\tilde{\mu},\tilde{\nu})$-manifold with $\tilde{\kappa}=-1,\tilde{\mu}=-1,%
\tilde{\nu}=-3.$

\section{Classification of the $3$-dimensional Paracontact metric $(\tilde{%
\protect\kappa},\tilde{\protect\mu},\tilde{\protect\nu})$-manifolds}

\label{fourth}

In this section, we analyze the different possibilities for the tensor field 
$\tilde{h}$. Hence one can recognize the differences between the contact and
paracontact cases by looking at the possible Jordan forms of the tensor
field $\tilde{h}.$

A self-adjoint linear operator $A$ of a Riemannian manifold is always
diagonalizable, but this is not the case for a self-adjoint linear operator $%
A$ of a Lorentzian manifold. It is known (\cite{a.z.petrov}, pp. 50-55) that
self-adjoint linear operator of a vector space with a Lorentzian inner
product can be put into four possible canonical forms. In particular, the
matrix representation $\tilde{g}$ of the induced metric on $M_{1}^{3}$ is of
Lorentz type, so the self-adjoint linear $A$ of $M_{1}^{3}$ can be put into
one of the following four forms with respect to frames $\left\{
e_{1},e_{2},e_{3}\right\} $ at $T_{p}M_{1}^{3}$ where $T_{p}M_{1}^{3}$ is
called tangent space to $M$ at $p$ \cite{MAGID},\cite{ONEILL}.

(I) $\ \ \ \ \ \ \ \ \ \ \ \ \ \ \ \ A=\left( 
\begin{array}{ccc}
\lambda _{1} & 0 & 0 \\ 
0 & \lambda _{2} & 0 \\ 
0 & 0 & \lambda _{3}%
\end{array}%
\right) ,$ \ \ \ \ $\ \ \ \ \ \ \tilde{g}=\left( 
\begin{array}{ccc}
-1 & 0 & 0 \\ 
0 & 1 & 0 \\ 
0 & 0 & 1%
\end{array}%
\right) ,$

\bigskip

(II) \ \ \ \ \ \ \ \ \ \ \ \ \ \ \ \ $A=\left( 
\begin{array}{ccc}
\lambda & 0 & 0 \\ 
1 & \lambda & 0 \\ 
0 & 0 & \lambda _{3}%
\end{array}%
\right) ,$ \ \ \ \ \ \ \ \ \ \ \ \ \ \ $\tilde{g}=\left( 
\begin{array}{ccc}
0 & 1 & 0 \\ 
1 & 0 & 0 \\ 
0 & 0 & 1%
\end{array}%
\right) $,\ \ 

\bigskip

(III) \ \ \ \ \ \ \ \ \ \ \ \ \ \ $\ A=\left( 
\begin{array}{ccc}
\gamma & -\lambda & 0 \\ 
\lambda & \gamma & 0 \\ 
0 & 0 & \lambda _{3}%
\end{array}%
\right) ,$ \ \ \ \ $\ \ \ \ \ \ $\ $\ \ \tilde{g}=\left( 
\begin{array}{ccc}
-1 & 0 & 0 \\ 
0 & 1 & 0 \\ 
0 & 0 & 1%
\end{array}%
\right) ,\lambda \neq 0$,\ \ \ \ \ \ \ \ \ \ 

\bigskip

(IV) $\ \ \ \ \ \ \ \ \ \ \ \ \ \ \ \ A=\left( 
\begin{array}{ccc}
\lambda & 0 & 0 \\ 
0 & \lambda & 1 \\ 
1 & 0 & \lambda%
\end{array}%
\right) ,$ \ \ \ \ \ \ \ \ \ \ \ \ \ \ \ $\tilde{g}=\left( 
\begin{array}{ccc}
0 & 1 & 0 \\ 
1 & 0 & 0 \\ 
0 & 0 & 1%
\end{array}%
\right) $.

The matrices $\tilde{g}$ for cases (I) and (III) are with respect to an
orthonormal basis of \ $T_{p}M_{1}^{3},$ whereas for cases (II) and (IV) are
with respect to a pseudo-orthonormal basis. This is a basis $\left\{
e_{1},e_{2},e_{3}\right\} $ of \ $T_{p}M_{1}^{3}$ satisfying $\tilde{g}%
(e_{1},e_{1})=\tilde{g}(e_{2},e_{2})=\tilde{g}(e_{1},e_{3})=\tilde{g}%
(e_{2},e_{3})=0$ and $\tilde{g}(e_{1},e_{2})=\tilde{g}(e_{3},e_{3})=1.$\ 

Next, we recall that the curvature tensor of a $3$-dimensional
pseudo-Riemannian manifold satisfies 
\begin{equation}
\tilde{R}(X,Y)Z=\tilde{g}(Y,Z)\tilde{Q}X-\tilde{g}(X,Z)\tilde{Q}Y+\tilde{g}(%
\tilde{Q}Y,Z)X-\tilde{g}(\tilde{Q}X,Z)Y-\frac{r}{2}(\tilde{g}(Y,Z)X-\tilde{g}%
(X,Z)Y).  \label{THREE DIM CURVATURE}
\end{equation}%
\ \ 

\textbf{The tensor }$\tilde{h}$\textbf{\ has the canonical form (I).} \ Let $%
(M,\tilde{\varphi},\xi ,\eta ,\tilde{g})$ be a $3$-dimensional paracontact
metric manifold . 
\begin{eqnarray*}
U_{1} &=&\left\{ p\in M\mid \tilde{h}(p)\neq 0\right\} \subset M \\
U_{2} &=&\left\{ p\in M\mid \tilde{h}(p)=0,\text{in a neighborhood of p}%
\right\} \subset M
\end{eqnarray*}%
That $\tilde{h}$ is a smooth function on $M$ implies $U_{1}\cup U_{2}$ is an
open and dense subset of $M$, so any property satisfied in $U_{1}\cup U_{2}$
is also satisfied in $M.$ For any point $p\in U_{1}\cup U_{2}$ there exists
a local orthonormal $\tilde{\varphi}$-basis $\{\tilde{e},\tilde{\varphi}%
\tilde{e},\xi \}$ of smooth eigenvectors of $\tilde{h}$ in a neighborhood of 
$p$, where $-\tilde{g}(\tilde{e},\tilde{e})=\tilde{g}(\tilde{\varphi}\tilde{e%
},\tilde{\varphi}\tilde{e})=\tilde{g}(\xi ,\xi )=1.$ On $U_{1}$ we put $%
\tilde{h}\tilde{e}=\tilde{\lambda}\tilde{e},$ where $\tilde{\lambda}$ is a
non-vanishing smooth function. Since $tr\tilde{h}=0$, we have $\tilde{h}%
\tilde{\varphi}\tilde{e}=-\tilde{\lambda}\tilde{\varphi}\tilde{e}$. The
eigenvalue function $\tilde{\lambda}$ is continuos on$\ M$ and smooth on $%
U_{1}\cup U_{2}.$ So, $\tilde{h}$ \ has following form%
\begin{equation}
\left( 
\begin{array}{ccc}
\tilde{\lambda} & 0 & 0 \\ 
0 & -\tilde{\lambda} & 0 \\ 
0 & 0 & 0%
\end{array}%
\right)  \label{A1}
\end{equation}%
respect to local orthonormal $\tilde{\varphi}$-basis $\{\tilde{e},\tilde{%
\varphi}\tilde{e},\xi \}$. In this case,\ we will say the operator $\tilde{h}
$ is of $\mathfrak{h}_{1}$ \textit{type}$.$ Using same method with \cite{KMP}
and \cite{PEROONE03}, we have

\begin{lemma}
\label{K>-1}Let $(M,\tilde{\varphi},\xi ,\eta ,\tilde{g})$ be a $3$%
-dimensional paracontact metric manifold with $\tilde{h}$ of $\mathfrak{h}%
_{1}$\emph{\ }\textit{type}$.$Then for the covariant derivative on $U_{1}$
the following equations are valid 
\begin{eqnarray}
i)\text{ }\tilde{\nabla}_{\tilde{e}}\tilde{e} &=&\frac{1}{2\tilde{\lambda}}%
\left[ \tilde{\sigma}(\tilde{e})-(\tilde{\varphi}\tilde{e})(\tilde{\lambda})%
\right] \tilde{\varphi}\tilde{e},\text{ \ }ii)\text{ }\tilde{\nabla}_{\tilde{%
e}}\tilde{\varphi}\tilde{e}=\frac{1}{2\tilde{\lambda}}\left[ \tilde{\sigma}(%
\tilde{e})-(\tilde{\varphi}\tilde{e})(\tilde{\lambda})\right] \tilde{e}+(1-%
\tilde{\lambda})\xi ,\text{ }  \notag \\
\text{\ }iii)\text{ }\tilde{\nabla}_{\tilde{e}}\xi &=&(\tilde{\lambda}-1)%
\tilde{\varphi}\tilde{e},  \notag \\
iv)\text{ }\tilde{\nabla}_{\tilde{\varphi}\tilde{e}}\tilde{e} &=&-\frac{1}{2%
\tilde{\lambda}}\left[ \tilde{\sigma}(\tilde{\varphi}\tilde{e})+\tilde{e}(%
\tilde{\lambda})\right] \tilde{\varphi}\tilde{e}-(\tilde{\lambda}+1)\xi ,%
\text{ \ }v)\text{ }\tilde{\nabla}_{\tilde{\varphi}\tilde{e}}\tilde{\varphi}%
\tilde{e}=-\frac{1}{2\tilde{\lambda}}\left[ \tilde{\sigma}(\tilde{\varphi}%
\tilde{e})+\tilde{e}(\tilde{\lambda})\right] \tilde{e},\text{ \ }  \notag \\
vi)\text{ }\tilde{\nabla}_{\tilde{\varphi}\tilde{e}}\xi &=&-(\tilde{\lambda}%
+1)\tilde{e},  \notag \\
vii)\text{ }\tilde{\nabla}_{\xi }\tilde{e} &=&b\tilde{\varphi}\tilde{e},%
\text{ \ }viii)\text{ }\tilde{\nabla}_{\xi }\tilde{\varphi}\tilde{e}=b\tilde{%
e},\text{ \ \ }  \notag \\
ix)\text{ }[\tilde{e},\xi ] &=&(\tilde{\lambda}-1-b)\tilde{\varphi}\tilde{e},%
\text{ \ }x)\text{ }[\tilde{\varphi}\tilde{e},\xi ]=(-\tilde{\lambda}-1-b)%
\tilde{e},\text{ \ }  \label{irem0} \\
xi)\text{ }[\tilde{e},\tilde{\varphi}\tilde{e}] &=&\frac{1}{2\tilde{\lambda}}%
\left[ \tilde{\sigma}(\tilde{e})-(\tilde{\varphi}\tilde{e})(\tilde{\lambda})%
\right] \tilde{e}+\frac{1}{2\tilde{\lambda}}\left[ \tilde{\sigma}(\tilde{%
\varphi}\tilde{e})+\tilde{e}(\lambda )\right] \tilde{\varphi}\tilde{e}+2\xi ,
\notag
\end{eqnarray}%
where 
\begin{equation*}
\text{ }b=\tilde{g}(\text{ }\tilde{\nabla}_{\xi }\tilde{e},\tilde{\varphi}%
\tilde{e}),\text{ }\tilde{\sigma}=\tilde{S}(\xi ,.)_{\ker \eta }\text{\ .}
\end{equation*}
\end{lemma}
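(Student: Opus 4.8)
The plan is to compute all nine covariant derivatives $\tilde\nabla_X Y$, $X,Y\in\{\tilde e,\tilde\varphi\tilde e,\xi\}$, in two stages. First I would use $\tilde\nabla\xi=-\tilde\varphi+\tilde\varphi\tilde h$ from \eqref{nablaxi} together with the metric-compatibility of $\tilde\nabla$ to reduce the unknown connection coefficients to just three functions; then I would pin down the two remaining in-plane coefficients by computing the Ricci curvature in the direction of $\xi$, which is legitimate because the functions $\tilde\sigma(\tilde e)=\tilde S(\xi,\tilde e)$ and $\tilde\sigma(\tilde\varphi\tilde e)=\tilde S(\xi,\tilde\varphi\tilde e)$ are exactly what appear in the claimed formulas.

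For the first stage, $iii)$, $vi)$ and $\tilde\nabla_\xi\xi=0$ follow at once from $\tilde\nabla_X\xi=-\tilde\varphi X+\tilde\varphi\tilde h X$ using $\tilde h\tilde e=\tilde\lambda\tilde e$, $\tilde h\tilde\varphi\tilde e=-\tilde\lambda\tilde\varphi\tilde e$, $\tilde\varphi^2\tilde e=\tilde e$ and $\tilde\varphi\xi=0$. Writing each $\tilde\nabla_X Y$ in the basis $\{\tilde e,\tilde\varphi\tilde e,\xi\}$ and differentiating the inner products $\tilde g(\tilde e,\tilde e)=-1$, $\tilde g(\tilde\varphi\tilde e,\tilde\varphi\tilde e)=\tilde g(\xi,\xi)=1$ and $\tilde g(\tilde e,\tilde\varphi\tilde e)=\tilde g(\tilde e,\xi)=\tilde g(\tilde\varphi\tilde e,\xi)=0$ along $\tilde e,\tilde\varphi\tilde e,\xi$ forces: the $\tilde e$-component of $\tilde\nabla_\cdot\tilde e$ and the $\tilde\varphi\tilde e$-component of $\tilde\nabla_\cdot\tilde\varphi\tilde e$ to vanish; $\tilde\nabla_\xi\tilde e=b\,\tilde\varphi\tilde e$ and $\tilde\nabla_\xi\tilde\varphi\tilde e=b\,\tilde e$ with the single function $b=\tilde g(\tilde\nabla_\xi\tilde e,\tilde\varphi\tilde e)$, giving $vii)$ and $viii)$; and the $\xi$-components of $\tilde\nabla_{\tilde e}\tilde\varphi\tilde e$, $\tilde\nabla_{\tilde\varphi\tilde e}\tilde e$, $\tilde\nabla_{\tilde\varphi\tilde e}\tilde\varphi\tilde e$ to equal $1-\tilde\lambda$, $-(\tilde\lambda+1)$, $0$ (read off against $\tilde\nabla_\cdot\xi$ from $iii)$, $vi)$). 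What survives are only two functions: the coefficient $p$ of $\tilde\varphi\tilde e$ in $\tilde\nabla_{\tilde e}\tilde e$ (which coincides with the $\tilde e$-coefficient of $\tilde\nabla_{\tilde e}\tilde\varphi\tilde e$) and the coefficient $q$ of $\tilde e$ in $\tilde\nabla_{\tilde\varphi\tilde e}\tilde\varphi\tilde e$ (coinciding with the $\tilde\varphi\tilde e$-coefficient of $\tilde\nabla_{\tilde\varphi\tilde e}\tilde e$). The content of $i),ii),iv),v)$ is then precisely $p=\frac{1}{2\tilde\lambda}[\tilde\sigma(\tilde e)-(\tilde\varphi\tilde e)(\tilde\lambda)]$ and $q=-\frac{1}{2\tilde\lambda}[\tilde\sigma(\tilde\varphi\tilde e)+\tilde e(\tilde\lambda)]$.

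The decisive stage is to extract $p$ and $q$ from curvature. I would compute $\tilde R(\tilde\varphi\tilde e,\tilde e)\xi=\tilde\nabla_{\tilde\varphi\tilde e}\tilde\nabla_{\tilde e}\xi-\tilde\nabla_{\tilde e}\tilde\nabla_{\tilde\varphi\tilde e}\xi-\tilde\nabla_{[\tilde\varphi\tilde e,\tilde e]}\xi$ using only the already-known derivatives of $\xi$ and the reduced first derivatives above; a direct calculation gives $\tilde R(\tilde\varphi\tilde e,\tilde e)\xi=(2\tilde\lambda q+\tilde e(\tilde\lambda))\tilde e+((\tilde\varphi\tilde e)(\tilde\lambda)+2\tilde\lambda p)\tilde\varphi\tilde e$, with no $\xi$-component. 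Tracing $\tilde S(X,\xi)=\sum_i\varepsilon_i\,\tilde g(\tilde R(e_i,X)\xi,e_i)$ over the frame with signs $(\varepsilon_1,\varepsilon_2,\varepsilon_3)=(-1,1,1)$, and noting that in each trace the self-term $\tilde R(X,X)=0$ and the $\xi$-term vanishes (since $\tilde R(\cdot,\cdot)\xi$ has no $\xi$-component), only one summand survives in each case, yielding $\tilde\sigma(\tilde e)=(\tilde\varphi\tilde e)(\tilde\lambda)+2\tilde\lambda p$ and $\tilde\sigma(\tilde\varphi\tilde e)=-(2\tilde\lambda q+\tilde e(\tilde\lambda))$. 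Solving these two scalar equations produces $p$ and $q$, hence $i),ii),iv),v)$. Finally $ix),x),xi)$ are immediate from torsion-freeness $[\,\cdot,\cdot\,]=\tilde\nabla_\cdot\,-\,\tilde\nabla_\cdot$; for instance $[\tilde e,\tilde\varphi\tilde e]=\tilde\nabla_{\tilde e}\tilde\varphi\tilde e-\tilde\nabla_{\tilde\varphi\tilde e}\tilde e$ gives $xi)$, while subtracting against $\tilde\nabla_\xi$ gives $ix),x)$.

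I expect the only real difficulty to be the sign bookkeeping in this curvature-to-Ricci passage: the index $\varepsilon_1=-1$ of the timelike vector $\tilde e$ must be tracked both inside the Ricci trace and when extracting components against $\tilde g(\tilde e,\tilde e)=-1$, and one must verify that the $\xi$-components of $\tilde R(\tilde e,\xi)\xi$ and $\tilde R(\tilde\varphi\tilde e,\xi)\xi$ genuinely vanish, so that no spurious diagonal term contaminates the two scalar equations. No conceptual obstruction arises; the integrability identity \eqref{INTEGRABLE} can serve as an independent consistency check on the in-plane derivatives.
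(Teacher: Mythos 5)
Your proposal is correct and follows essentially the same route the paper takes (it proves this lemma by the method it writes out in full for the parallel Lemmas \ref{de2} and \ref{lem3}): first reduce the connection to the three unknown functions $b,p,q$ using $\tilde{\nabla}\xi=-\tilde{\varphi}+\tilde{\varphi}\tilde{h}$ and metric compatibility, then pin down $p$ and $q$ by computing $\tilde{R}(\tilde{\varphi}\tilde{e},\tilde{e})\xi$ and matching it against the Ricci components $\tilde{\sigma}(\tilde{e}),\tilde{\sigma}(\tilde{\varphi}\tilde{e})$, with the brackets following from torsion-freeness. The only cosmetic difference is that you extract the relations $\tilde{\sigma}(\tilde{e})=2\tilde{\lambda}p+(\tilde{\varphi}\tilde{e})(\tilde{\lambda})$ and $\tilde{\sigma}(\tilde{\varphi}\tilde{e})=-(2\tilde{\lambda}q+\tilde{e}(\tilde{\lambda}))$ by tracing the curvature over the frame, where the paper instead compares \eqref{CURVATURE 4} with the three-dimensional identity \eqref{THREE DIM CURVATURE} evaluated at $Z=\xi$ (these are the same computation in different clothing), and your flagged worry about spurious $\xi$-terms is vacuous since $\tilde{g}(\tilde{R}(X,Y)\xi,\xi)=0$ holds automatically by the antisymmetry of the curvature tensor in its last two arguments.
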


\begin{proposition}
\label{d2}Let $(M,\tilde{\varphi},\xi ,\eta ,\tilde{g})$ be a $3$%
-dimensional paracontact metric manifold. If $\tilde{h}$ is $\mathfrak{h}%
_{1} $ type then on $U_{1}$ we have%
\begin{equation}
\tilde{\nabla}_{\xi }\tilde{h}=-2b\tilde{h}\tilde{\varphi}+\xi (\tilde{%
\lambda})s,  \label{irem4}
\end{equation}%
where $s$ is the $(1,1)$-type tensor defined by $s\xi =0,$ $s\tilde{e}=%
\tilde{e},$ \ $s\tilde{\varphi}\tilde{e}=-\tilde{\varphi}\tilde{e}.$
\end{proposition}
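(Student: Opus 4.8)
The plan is to prove the stated operator identity by evaluating both sides on the local eigenbasis $\{\tilde{e},\tilde{\varphi}\tilde{e},\xi\}$ of $\tilde{h}$ available on $\mathcal{U}_{1}$. Since both $\tilde{\nabla}_{\xi}\tilde{h}$ and $-2b\tilde{h}\tilde{\varphi}+\xi(\tilde{\lambda})s$ are $(1,1)$-tensors, it is enough to check that they agree on each of these three frame vectors. Throughout I would use the Leibniz rule $(\tilde{\nabla}_{\xi}\tilde{h})X=\tilde{\nabla}_{\xi}(\tilde{h}X)-\tilde{h}(\tilde{\nabla}_{\xi}X)$ together with the eigenvalue equations $\tilde{h}\tilde{e}=\tilde{\lambda}\tilde{e}$, $\tilde{h}\tilde{\varphi}\tilde{e}=-\tilde{\lambda}\tilde{\varphi}\tilde{e}$, $\tilde{h}\xi=0$, and the covariant derivatives along $\xi$ recorded in Lemma \ref{K>-1}, namely $\tilde{\nabla}_{\xi}\tilde{e}=b\tilde{\varphi}\tilde{e}$ and $\tilde{\nabla}_{\xi}\tilde{\varphi}\tilde{e}=b\tilde{e}$, supplemented by $\tilde{\nabla}_{\xi}\xi=0$, which follows at once from \eqref{nablaxi} since $\tilde{\varphi}\xi=0=\tilde{h}\xi$.

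First I would dispose of $X=\xi$: here $\tilde{h}\xi=0$ and $\tilde{\nabla}_{\xi}\xi=0$ force $(\tilde{\nabla}_{\xi}\tilde{h})\xi=0$, while $s\xi=0$ and $\tilde{\varphi}\xi=0$ make the proposed right-hand side vanish too. Next I would compute on $\tilde{e}$: differentiating $\tilde{h}\tilde{e}=\tilde{\lambda}\tilde{e}$ produces a term $\xi(\tilde{\lambda})\tilde{e}$ from the derivative of the eigenvalue together with $\tilde{\lambda}\tilde{\nabla}_{\xi}\tilde{e}=\tilde{\lambda}b\tilde{\varphi}\tilde{e}$, and the correction $-\tilde{h}(\tilde{\nabla}_{\xi}\tilde{e})=-b\,\tilde{h}\tilde{\varphi}\tilde{e}=b\tilde{\lambda}\tilde{\varphi}\tilde{e}$ adds in; the two $b$-contributions reinforce rather than cancel precisely because $\tilde{h}$ anti-commutes with $\tilde{\varphi}$. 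This gives $(\tilde{\nabla}_{\xi}\tilde{h})\tilde{e}=\xi(\tilde{\lambda})\tilde{e}+2\tilde{\lambda}b\tilde{\varphi}\tilde{e}$. The computation on $\tilde{\varphi}\tilde{e}$ is entirely parallel, using $\tilde{\nabla}_{\xi}\tilde{\varphi}\tilde{e}=b\tilde{e}$ and $\tilde{h}\tilde{e}=\tilde{\lambda}\tilde{e}$, and yields $(\tilde{\nabla}_{\xi}\tilde{h})\tilde{\varphi}\tilde{e}=-\xi(\tilde{\lambda})\tilde{\varphi}\tilde{e}-2\tilde{\lambda}b\tilde{e}$.

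Finally I would verify that the stated operator reproduces these three values. Applying $-2b\tilde{h}\tilde{\varphi}+\xi(\tilde{\lambda})s$ to $\tilde{e}$ gives $-2b\,\tilde{h}\tilde{\varphi}\tilde{e}+\xi(\tilde{\lambda})\tilde{e}=2b\tilde{\lambda}\tilde{\varphi}\tilde{e}+\xi(\tilde{\lambda})\tilde{e}$, and applying it to $\tilde{\varphi}\tilde{e}$ gives $-2b\,\tilde{h}\tilde{\varphi}^{2}\tilde{e}-\xi(\tilde{\lambda})\tilde{\varphi}\tilde{e}=-2b\tilde{\lambda}\tilde{e}-\xi(\tilde{\lambda})\tilde{\varphi}\tilde{e}$, where I used $\tilde{\varphi}^{2}\tilde{e}=\tilde{e}$ (since $\eta(\tilde{e})=0$) and $s\tilde{\varphi}\tilde{e}=-\tilde{\varphi}\tilde{e}$; these match the values found above. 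I do not expect a genuine conceptual obstacle here, as the identity is forced by Lemma \ref{K>-1}; the only real care needed is the bookkeeping of signs. In particular, the summand $-2b\tilde{h}\tilde{\varphi}$ is exactly what encodes the off-diagonal $b$-contributions, its eigenvalues $\mp\tilde{\lambda}$ on $\tilde{e},\tilde{\varphi}\tilde{e}$ combining with the factor $-2b$ to give the required $\pm 2\tilde{\lambda}b$, while the tensor $s$ isolates the diagonal $\xi(\tilde{\lambda})$-term with its sign reversal between the two eigendistributions.
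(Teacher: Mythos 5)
Your proof is correct and follows essentially the same route as the paper: both evaluate the two sides of \eqref{irem4} on the eigenbasis $\{\xi,\tilde{e},\tilde{\varphi}\tilde{e}\}$, using the Leibniz rule together with the relations $\tilde{\nabla}_{\xi}\tilde{e}=b\tilde{\varphi}\tilde{e}$, $\tilde{\nabla}_{\xi}\tilde{\varphi}\tilde{e}=b\tilde{e}$ from Lemma \ref{K>-1} and the eigenvalue equations for $\tilde{h}$. The paper merely records the three resulting identities without spelling out the Leibniz-rule bookkeeping, so your write-up is a more detailed version of the same argument.
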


\begin{proof}
Using (\ref{irem0}), we get%
\begin{eqnarray*}
(\tilde{\nabla}_{\xi }\tilde{h})\xi &=&0=(-2b\tilde{h}\tilde{\varphi}+\xi (%
\tilde{\lambda})s)\xi , \\
(\tilde{\nabla}_{\xi }\tilde{h})\tilde{e} &=&2\tilde{\lambda}b\tilde{\varphi}%
\tilde{e}+\xi (\tilde{\lambda})\tilde{e}=(-2b\tilde{h}\tilde{\varphi}+\xi (%
\tilde{\lambda})s)\tilde{e}, \\
(\tilde{\nabla}_{\xi }\tilde{h})\tilde{\varphi}\tilde{e} &=&-2\tilde{\lambda}%
b\tilde{e}-\xi (\tilde{\lambda})\tilde{\varphi}\tilde{e}=(-2b\tilde{h}\tilde{%
\varphi}+\xi (\tilde{\lambda})s)\tilde{\varphi}\tilde{e}.
\end{eqnarray*}

So, the last equations complete the proof of (\ref{irem4}).
\end{proof}

\begin{proposition}
\label{h-fi}Let $(M,\tilde{\varphi},\xi ,\eta ,\tilde{g})$ be a $3$%
-dimensional paracontact metric manifold with $\tilde{h}$ of $\mathfrak{h}%
_{1}$ type$.$Then the following equation holds on $M.$%
\begin{equation}
\tilde{h}^{2}-\tilde{\varphi}^{2}=\frac{\tilde{S}(\xi ,\xi )}{2}\tilde{%
\varphi}^{2}.  \label{irem5}
\end{equation}
\end{proposition}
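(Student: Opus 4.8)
The plan is to reduce everything to the eigenvalue function $\tilde{\lambda}$ and then invoke the formula (\ref{RICCI ZETA}) for $\tilde{S}(\xi ,\xi )$. Since $\tilde{h}$ is of $\mathfrak{h}_{1}$ type, by (\ref{A1}) it acts on the local $\tilde{\varphi}$-basis by $\tilde{h}\tilde{e}=\tilde{\lambda}\tilde{e}$, $\tilde{h}\tilde{\varphi}\tilde{e}=-\tilde{\lambda}\tilde{\varphi}\tilde{e}$ and $\tilde{h}\xi =0$. First I would square this: $\tilde{h}^{2}$ acts as multiplication by $\tilde{\lambda}^{2}$ on the contact distribution $\ker \eta $ and annihilates $\xi $. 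Since $\tilde{\varphi}^{2}=I-\eta \otimes \xi $ is exactly the projection onto $\ker \eta $, this immediately yields the pointwise identity $\tilde{h}^{2}=\tilde{\lambda}^{2}\tilde{\varphi}^{2}$ on $\mathcal{U}_{1}$.

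Next I would compute $\mathrm{tr}\,\tilde{h}^{2}$. Because $\tilde{h}^{2}$ is diagonal in the eigenbasis $\{\tilde{e},\tilde{\varphi}\tilde{e},\xi \}$ with diagonal entries $\tilde{\lambda}^{2},\tilde{\lambda}^{2},0$, its trace is simply $2\tilde{\lambda}^{2}$; the one point to be careful about is that the pseudo-Riemannian signs $\tilde{g}(\tilde{e},\tilde{e})=-1$ do not affect the trace of a diagonalized operator. Substituting into (\ref{RICCI ZETA}) with $n=1$ gives $\tilde{S}(\xi ,\xi )=-2+2\tilde{\lambda}^{2}$, whence $\frac{\tilde{S}(\xi ,\xi )}{2}=\tilde{\lambda}^{2}-1$.

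Now the two sides of (\ref{irem5}) coincide by direct substitution: the right-hand side equals $(\tilde{\lambda}^{2}-1)\tilde{\varphi}^{2}$, while the left-hand side equals $\tilde{h}^{2}-\tilde{\varphi}^{2}=\tilde{\lambda}^{2}\tilde{\varphi}^{2}-\tilde{\varphi}^{2}=(\tilde{\lambda}^{2}-1)\tilde{\varphi}^{2}$. This establishes (\ref{irem5}) on $\mathcal{U}_{1}$.

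Finally I would extend the identity to all of $M$. On $\mathcal{U}_{2}$ one has $\tilde{h}=0$, so $\tilde{h}^{2}=0$ and (\ref{RICCI ZETA}) gives $\tilde{S}(\xi ,\xi )=-2$; then both sides of (\ref{irem5}) equal $-\tilde{\varphi}^{2}$, so the identity holds there as well. Since $\mathcal{U}_{1}\cup \mathcal{U}_{2}$ is open and dense in $M$ and both sides of (\ref{irem5}) are continuous tensor fields, the identity propagates to all of $M$ by continuity. The only genuine obstacle is the trace computation in the indefinite setting; everything else is algebraic substitution.
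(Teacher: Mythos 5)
Your proof is correct and follows essentially the same route as the paper: both use \eqref{RICCI ZETA} with $n=1$ and the eigenvalues $\tilde{\lambda},-\tilde{\lambda},0$ of $\tilde{h}$ to get $\tilde{S}(\xi,\xi)=2(\tilde{\lambda}^{2}-1)$, then verify that both sides of \eqref{irem5} reduce to $(\tilde{\lambda}^{2}-1)\tilde{\varphi}^{2}$ on the eigenbasis. Your explicit treatment of $\mathcal{U}_{2}$ and the open-dense extension to $M$ is a point the paper handles only implicitly (via its earlier remark that properties on $U_{1}\cup U_{2}$ propagate to $M$), so including it is a welcome but not essentially different addition.
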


\begin{proof}
Using (\ref{RICCI ZETA}), we have $\tilde{S}(\xi ,\xi )=2(\tilde{\lambda}%
^{2}-1)$. After calculating $\tilde{h}^{2}-\tilde{\varphi}^{2}$ with respect
to the basis components, we get%
\begin{equation}
\tilde{h}^{2}\xi -\tilde{\varphi}^{2}\xi =\frac{\tilde{S}(\xi ,\xi )}{2}%
\tilde{\varphi}^{2}\xi =0,\text{ \ }\tilde{h}^{2}\tilde{e}-\tilde{\varphi}%
^{2}\tilde{e}=\frac{\tilde{S}(\xi ,\xi )}{2}\tilde{\varphi}^{2}\tilde{e},%
\text{ \ }\tilde{h}^{2}\tilde{\varphi}\tilde{e}-\tilde{\varphi}^{3}\tilde{e}=%
\frac{\tilde{S}(\xi ,\xi )}{2}\tilde{\varphi}^{2}\tilde{\varphi}\tilde{e}.
\label{irem6}
\end{equation}%
Thus the last equation completes the proof of (\ref{irem5}).
\end{proof}

\begin{lemma}
\label{Q}Let $(M,\tilde{\varphi},\xi ,\eta ,\tilde{g})$ be a $3$-dimensional
paracontact metric manifold. If $\tilde{h}$ is $\mathfrak{h}_{1}$ type then
the Ricci operator $\tilde{Q}$ is given by%
\begin{equation}
\tilde{Q}=a_{1}I+b_{1}\eta \otimes \xi -\tilde{\varphi}(\tilde{\nabla}_{\xi }%
\tilde{h})+\tilde{\sigma}(\tilde{\varphi}^{2})\otimes \xi -\tilde{\sigma}(%
\tilde{e})\eta \otimes \tilde{e}+\tilde{\sigma}(\tilde{\varphi}\tilde{e}%
)\eta \otimes \tilde{\varphi}\tilde{e},  \label{irem7}
\end{equation}%
where $a_{1}$ and $b_{1}$ are smooth functions defined by $a_{1}$ $=1-\tilde{%
\lambda}^{2}+\frac{r}{2}$ and $b_{1}=3(\tilde{\lambda}^{2}-1)-\frac{r}{2},$
respectively. Moreover the components of the Ricci operator $\tilde{Q}$ are
given by
\end{lemma}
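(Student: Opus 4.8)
The plan is to avoid computing horizontal sectional curvatures altogether and instead exploit that in dimension three the full curvature tensor is recovered from the Ricci operator via \eqref{THREE DIM CURVATURE}. Reading that identity backwards with $Y=Z=\xi$ and solving for $\tilde{Q}X$ produces the master equation $\tilde{Q}X=\tilde{R}(X,\xi)\xi+\eta(X)\tilde{Q}\xi-\tilde{S}(\xi,\xi)X+\tilde{S}(\xi,X)\xi+\frac{r}{2}(X-\eta(X)\xi)$, valid for every $X$. The whole problem thereby collapses to evaluating the two quantities $\tilde{Q}\xi$ and $\tilde{R}(X,\xi)\xi$; the awkward horizontal sectional curvature $\tilde{g}(\tilde{R}(\tilde{e},\tilde{\varphi}\tilde{e})\tilde{\varphi}\tilde{e},\tilde{e})$ never appears explicitly, being absorbed into the free scalar-curvature parameter $r$ that already sits inside $a_{1}$ and $b_{1}$.

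First I would pin down $\tilde{Q}\xi$. Since $\tilde{g}(\tilde{Q}\xi,\cdot)=\tilde{S}(\xi,\cdot)$, formula \eqref{RICCI ZETA} gives $\tilde{S}(\xi,\xi)=-2+\operatorname{tr}\tilde{h}^{2}=2(\tilde{\lambda}^{2}-1)$ for $\tilde{h}$ of $\mathfrak{h}_{1}$ type, while the horizontal part of $\tilde{S}(\xi,\cdot)$ is $\tilde{\sigma}$ by definition. Expanding along the $\tilde{\varphi}$-basis and keeping track of the signs $-\tilde{g}(\tilde{e},\tilde{e})=\tilde{g}(\tilde{\varphi}\tilde{e},\tilde{\varphi}\tilde{e})=1$, one obtains $\tilde{Q}\xi=\tilde{S}(\xi,\xi)\xi-\tilde{\sigma}(\tilde{e})\tilde{e}+\tilde{\sigma}(\tilde{\varphi}\tilde{e})\tilde{\varphi}\tilde{e}$; this is the piece that will later supply the three $\eta$-weighted terms of the claimed expression.

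The key step is the evaluation of $\tilde{R}(X,\xi)\xi$ on the horizontal distribution. Here I would use Zamkovoy's identity \eqref{irem000}, rewritten as $\tilde{\varphi}\tilde{R}(\xi,X)\xi=(\tilde{\nabla}_{\xi}\tilde{h})X+\tilde{\varphi}X-\tilde{h}^{2}\tilde{\varphi}X$. For $X$ orthogonal to $\xi$ the vector $\tilde{R}(\xi,X)\xi$ is again horizontal, so applying $\tilde{\varphi}$ and using that $\tilde{\varphi}^{2}=\mathrm{id}$ on $\ker\eta$ recovers $\tilde{R}(\xi,X)\xi$ itself. Substituting $\tilde{h}^{2}=\tilde{\lambda}^{2}\tilde{\varphi}^{2}$ from Proposition \ref{h-fi} collapses the two $\tilde{\varphi}$-terms to the scalar $(1-\tilde{\lambda}^{2})$, yielding $\tilde{R}(X,\xi)\xi=-\tilde{\varphi}(\tilde{\nabla}_{\xi}\tilde{h})X-(1-\tilde{\lambda}^{2})X$. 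This is exactly the origin of the $-\tilde{\varphi}(\tilde{\nabla}_{\xi}\tilde{h})$ term in \eqref{irem7}.

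Finally I would substitute both computations into the master equation. On the horizontal part the scalar coefficient of $X$ reduces, after a one-line check, to $a_{1}=1-\tilde{\lambda}^{2}+\frac{r}{2}$, so that $\tilde{Q}X=a_{1}X-\tilde{\varphi}(\tilde{\nabla}_{\xi}\tilde{h})X+\tilde{\sigma}(X)\xi$ for horizontal $X$; writing a general $X=\tilde{\varphi}^{2}X+\eta(X)\xi$ and reinserting $\eta(X)\tilde{Q}\xi$ then reassembles the $\eta$-weighted remainder, with the $\xi$-coefficient turning into $b_{1}=-a_{1}+\tilde{S}(\xi,\xi)=3(\tilde{\lambda}^{2}-1)-\frac{r}{2}$. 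The only genuine obstacle is this last reorganization: one must use that $(\tilde{\nabla}_{\xi}\tilde{h})\xi=0$ (so that $\tilde{\varphi}(\tilde{\nabla}_{\xi}\tilde{h})$ only sees the horizontal part of $X$) and that $\tilde{\sigma}(\tilde{\varphi}^{2}X)=\tilde{\sigma}(X)$ on $\ker\eta$, so that the decomposition reproduces \eqref{irem7} term by term rather than merely matching traces through $a_{1}+b_{1}=\tilde{S}(\xi,\xi)$.
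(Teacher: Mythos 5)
Your proposal is correct and follows essentially the same route as the paper's own proof: both start from the three-dimensional curvature identity \eqref{THREE DIM CURVATURE} evaluated at $Y=Z=\xi$, invert Zamkovoy's identity \eqref{irem000} to express $\tilde{R}(X,\xi)\xi$ through $-\tilde{\varphi}(\tilde{\nabla}_{\xi}\tilde{h})$ and $\tilde{h}^{2}-\tilde{\varphi}^{2}$, and expand $\tilde{Q}\xi$ in the orthonormal $\tilde{\varphi}$-basis to produce the $\tilde{\sigma}$-terms, your ``horizontal first, then reassemble via $X=\tilde{\varphi}^{2}X+\eta(X)\xi$'' bookkeeping being only a cosmetic variant of the paper's use of $\tilde{S}(X,\xi)=\tilde{S}(\tilde{\varphi}^{2}X,\xi)+\eta(X)\tilde{S}(\xi,\xi)$ together with Proposition \ref{h-fi}. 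The only point left implicit is the component display \eqref{irem7a}, which (as the paper notes) follows by evaluating \eqref{irem7} on the basis using Proposition \ref{d2}.
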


\begin{eqnarray}
\tilde{Q}\xi &=&(a_{1}+b_{1})\xi -\tilde{\sigma}(\tilde{e})\tilde{e}+\tilde{%
\sigma}(\tilde{\varphi}\tilde{e})\tilde{\varphi}\tilde{e},  \notag \\
\tilde{Q}\tilde{e} &=&\tilde{\sigma}(\tilde{e})\xi +(a_{1}-2b\tilde{\lambda})%
\tilde{e}-\xi (\tilde{\lambda})\tilde{\varphi}\tilde{e},  \label{irem7a} \\
\tilde{Q}\tilde{\varphi}\tilde{e} &=&\tilde{\sigma}(\tilde{\varphi}\tilde{e}%
)\xi +\xi (\tilde{\lambda})\tilde{e}+(a_{1}+2b\tilde{\lambda})\tilde{\varphi}%
\tilde{e}.  \notag
\end{eqnarray}

\begin{proof}
From (\ref{THREE DIM CURVATURE}), we have%
\begin{equation*}
\tilde{l}X=\tilde{R}(X,\xi )\xi =\tilde{S}(\xi ,\xi )X-\tilde{S}(X,\xi )\xi +%
\tilde{Q}X-\eta (X)\tilde{Q}\xi -\frac{r}{2}(X-\eta (X)\xi ),
\end{equation*}%
where $\tilde{l}$ denotes the Jacobi operator and $X$ is a vector field.
Using (\ref{irem000}), the last equation implies%
\begin{equation*}
\tilde{Q}X=-\tilde{\varphi}^{2}X+\tilde{h}^{2}X-\tilde{\varphi}(\tilde{\nabla%
}_{\xi }\tilde{h})X-\tilde{S}(\xi ,\xi )X+\tilde{S}(X,\xi )\xi +\eta (X)%
\tilde{Q}\xi +\frac{r}{2}(X-\eta (X)\xi ).
\end{equation*}%
Since $\tilde{S}(X,\xi )=\tilde{S}(\tilde{\varphi}^{2}X,\xi )+\eta (X)\tilde{%
S}(\xi ,\xi ),$ we have%
\begin{equation}
\tilde{Q}X=\frac{\tilde{S}(\xi ,\xi )}{2}\tilde{\varphi}^{2}X-\tilde{\varphi}%
(\tilde{\nabla}_{\xi }\tilde{h})X-\tilde{S}(\xi ,\xi )X+\tilde{S}(\tilde{%
\varphi}^{2}X,\xi )\xi +\eta (X)\tilde{S}(\xi ,\xi )\xi +\eta (X)\tilde{Q}%
\xi +\frac{r}{2}\tilde{\varphi}^{2}X.  \label{irem8}
\end{equation}%
One can easily prove that 
\begin{equation}
\tilde{Q}\xi =-\tilde{\sigma}(\tilde{e})\tilde{e}+\tilde{\sigma}(\tilde{%
\varphi}\tilde{e})\tilde{\varphi}\tilde{e}+\tilde{S}(\xi ,\xi )\xi .
\label{irem9}
\end{equation}%
Using (\ref{irem9}) in (\ref{irem8}), we have%
\begin{eqnarray}
\tilde{Q}X &=&\left( 1-\tilde{\lambda}^{2}+\frac{r}{2}\right) X+\left( 3(%
\tilde{\lambda}^{2}-1)-\frac{r}{2}\right) \eta (X)\xi  \label{irem10} \\
&&-\tilde{\varphi}(\tilde{\nabla}_{\xi }\tilde{h})X+\tilde{\sigma}(\tilde{%
\varphi}^{2}X)\xi -\eta (X)\tilde{\sigma}(\tilde{e})\tilde{e}+\eta (X)\tilde{%
\sigma}(\tilde{\varphi}\tilde{e})\tilde{\varphi}\tilde{e},  \notag
\end{eqnarray}%
for arbitrary vector field $X$. Hence, the proof follows from (\ref{irem10}%
). By (\ref{irem4}) and (\ref{irem10}) we get (\ref{irem7a}).
\end{proof}

\textbf{The tensor }$\tilde{h}$\textbf{\ has the canonical form (II).} Let $%
(M,\tilde{\varphi},\xi ,\eta ,\tilde{g})$ be a $3$-dimensional paracontact
metric manifold and $p$ is a point of $M$. Then there exists a local
pseudo-orthonormal basis $\{e_{1},e_{2},\xi \}$ in a neighborhood of $p$
where $\tilde{g}(e_{1},e_{1})=\tilde{g}(e_{2},e_{2})=\tilde{g}(e_{1},\xi )=%
\tilde{g}(e_{2},\xi )=0$ and $\tilde{g}(e_{1},e_{2})=1.$

\begin{lemma}
\label{II}Let $\mathcal{U}$ be the open subset of $M$ where $\tilde{h}\neq 0$%
. For every $p\in \mathcal{U}$ there exists an open neighborhood of $p$ such
that $\tilde{h}e_{1}=e_{2},\tilde{h}e_{2}=0,\tilde{h}\xi =0$ and $\tilde{%
\varphi}e_{1}=\pm e_{1},$ $\tilde{\varphi}e_{2}=\mp e_{2}.$
\end{lemma}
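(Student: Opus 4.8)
The ingredients I would use are the standard facts that on a paracontact metric manifold $\tilde{h}$ is $\tilde{g}$-self-adjoint, trace-free and kills $\xi$, while $\tilde\varphi$ satisfies $\tilde\varphi\xi=0$, $\eta\circ\tilde\varphi=0$, $\tilde\varphi^{2}=I-\eta\otimes\xi$, the anticommutation $\tilde{h}\tilde\varphi=-\tilde\varphi\tilde{h}$ and the compatibility $\tilde{g}(\tilde\varphi X,\tilde\varphi Y)=-\tilde{g}(X,Y)+\eta(X)\eta(Y)$. Since $\tilde{g}(\xi,\xi)=1$ and $\tilde{h}\xi=0$, the vector $\xi$ is a unit spacelike eigenvector of $\tilde{h}$ with eigenvalue $0$, so $0$ is an eigenvalue of $\tilde{h}$. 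Writing $\tilde{h}$ in its canonical form (II) with respect to a local pseudo-orthonormal frame $\{e_1,e_2,e_3\}$, its eigenvalues are $\lambda$ (double) and $\lambda_3$, with $e_2,e_3$ the eigenvectors and $\tilde{h}e_1=\lambda e_1+e_2$. Trace-freeness gives $2\lambda+\lambda_3=0$, while the existence of the eigenvalue $0$ forces $\lambda=0$ or $\lambda_3=0$; in either case both vanish, so that $\tilde{h}e_1=e_2$, $\tilde{h}e_2=0$ and $\tilde{h}e_3=0$. In particular $\tilde{h}$ is a nonzero nilpotent on $\mathcal{U}$, with $\ker\tilde{h}=\mathrm{span}\{e_2,e_3\}$.

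Next I would align the frame with $\xi$. Since $\tilde{h}\xi=0$, we have $\xi\in\mathrm{span}\{e_2,e_3\}$, say $\xi=\alpha e_2+\beta e_3$; the normalization $\tilde{g}(\xi,\xi)=1$ gives $\beta^{2}=1$, and after replacing $e_3$ by $-e_3$ if necessary I may assume $\beta=1$. I then keep $e_2$, put the new third vector equal to $\xi$, and replace $e_1$ by $e_1-\tfrac{\alpha^{2}}{2}e_2-\alpha e_3$. A short computation shows that the pseudo-orthonormal relations $\tilde{g}(e_1,e_2)=\tilde{g}(e_3,e_3)=1$ (all other products zero) are restored, while $\tilde{h}e_1=e_2$, $\tilde{h}e_2=0$ and $\tilde{h}e_3=\tilde{h}\xi=0$ are preserved; thus we obtain a frame with $\xi=e_3$ and $\tilde{h}$ in the desired form.

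Finally I would determine $\tilde\varphi$. From $\eta\circ\tilde\varphi=0$ the operator $\tilde\varphi$ maps $\mathcal{D}=\mathrm{span}\{e_1,e_2\}=\xi^{\perp}$ into itself and $\tilde\varphi e_3=\tilde\varphi\xi=0$. Writing $\tilde\varphi e_1=ae_1+be_2$ and $\tilde\varphi e_2=ce_1+de_2$, the anticommutation $\tilde{h}\tilde\varphi=-\tilde\varphi\tilde{h}$ evaluated on $e_1$ gives $ae_2=-(ce_1+de_2)$, hence $c=0$ and $d=-a$; the identity $\tilde\varphi^{2}=I$ on $\mathcal{D}$ then yields $a^{2}=1$; and since $\eta(e_1)=0$ the metric compatibility $\tilde{g}(\tilde\varphi e_1,\tilde\varphi e_1)=-\tilde{g}(e_1,e_1)=0$ becomes $2ab=0$, so $b=0$. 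Hence $\tilde\varphi e_1=\pm e_1$ and $\tilde\varphi e_2=\mp e_2$, which completes the claim. I expect the main obstacle to be the eigenvalue bookkeeping together with the realignment of the frame: one must check that $\tilde{h}\xi=0$ and $\mathrm{tr}\,\tilde{h}=0$ collapse the Jordan block of (II) into a nonzero nilpotent with vanishing eigenvalues, and that the frame can be readjusted to $\xi=e_3$ without disturbing either the pseudo-orthonormal normalizations or the relation $\tilde{h}e_1=e_2$; once this adapted frame is secured, fixing $\tilde\varphi$ is routine linear algebra.
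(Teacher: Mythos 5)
Your proof is correct, and it departs from the paper's argument precisely at the step where the paper's own reasoning is defective, so the comparison is worth spelling out. Both proofs open the same way: in a pseudo-orthonormal frame realizing form (II) one has $\tilde{h}e_{1}=\tilde{\lambda}e_{1}+e_{2}$, $\tilde{h}e_{2}=\tilde{\lambda}e_{2}$, $\tilde{h}e_{3}=\tilde{\lambda}_{3}e_{3}$, and the combination of $\mathrm{tr}\,\tilde{h}=2\tilde{\lambda}+\tilde{\lambda}_{3}=0$ with $\tilde{h}\xi =0$, $\xi \neq 0$ kills both eigenvalues (you count eigenvalues, the paper works componentwise; this difference is immaterial). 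The divergence is in how $\xi$ gets matched with $e_{3}$. The paper writes $\xi =a_{12}e_{2}\pm e_{3}$ and claims $a_{12}=0$ by contradiction, via the chain $0=\tilde{g}(\tilde{\varphi}e_{1},\tilde{\varphi}e_{1})=-\tilde{g}(e_{1},e_{1})+\eta (e_{1})\eta (e_{1})=(\tilde{\varphi}_{23})^{2}$; but $\eta (e_{1})=\tilde{g}(e_{1},\xi )=a_{12}$, so placing $0$ on the left silently assumes the very conclusion being proved, and the correct identity $(\tilde{\varphi}_{23})^{2}=a_{12}^{2}$ yields no contradiction. In fact $a_{12}=0$ is not forced: starting from an adapted frame $\{f_{1},f_{2},\xi \}$, the frame $e_{1}=f_{1}-\frac{\alpha ^{2}}{2}f_{2}+\alpha \xi$, $e_{2}=f_{2}$, $e_{3}=\xi -\alpha f_{2}$ is again pseudo-orthonormal, puts $\tilde{h}$ in form (II), and has $\xi =\alpha e_{2}+e_{3}$ with $\alpha$ arbitrary. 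Your proof goes the constructive way instead: you accept $\xi =\alpha e_{2}+e_{3}$ and rebuild the frame, replacing $e_{3}$ by $\xi$ and $e_{1}$ by $e_{1}-\frac{\alpha ^{2}}{2}e_{2}-\alpha e_{3}$, and your verification that this preserves both the pseudo-orthonormal relations and $\tilde{h}e_{1}=e_{2}$, $\tilde{h}e_{2}=\tilde{h}e_{3}=0$ checks out. Since the lemma asserts only the existence of some adapted frame, this proves it and simultaneously repairs the gap in the published argument. The final determination of $\tilde{\varphi}$ is equivalent in the two treatments (you use $\eta \circ \tilde{\varphi}=0$, the anticommutation $\tilde{h}\tilde{\varphi}=-\tilde{\varphi}\tilde{h}$, the identity $\tilde{\varphi}^{2}=I$ on $\ker \eta$, and the metric compatibility; the paper uses the antisymmetric matrix form of $\tilde{\varphi}$ together with $\tilde{\varphi}\xi =0$), and your appeal to $\eta (e_{1})=0$ at that stage is legitimate, unlike the paper's, because by then your frame is already adapted to $\xi$.
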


\begin{proof}
Since the tensor $\tilde{h}$ has canonical form (II) (with respect to a
pseudo-orthonormal basis $\{e_{1},e_{2},\xi \})$ then $\tilde{h}e_{1}=\tilde{%
\lambda}e_{1}+e_{2},$ \ $\tilde{h}e_{2}=\tilde{\lambda}e_{2},$ $\tilde{h}\xi
=0.$ Since $\tilde{h}\xi =0$ and $tr(h)=0$ we have $\tilde{\lambda}=0.$ On
the other hand using the anti-symmetry tensor field $\tilde{\varphi}$ of
type $(1,1),$ with respect to the pseudo-orthonormal basis $%
\{e_{1},e_{2},\xi \}$, takes the form%
\begin{equation}
\left( 
\begin{array}{ccc}
\tilde{\varphi}_{11} & \tilde{\varphi}_{12} & 0 \\ 
\tilde{\varphi}_{21} & \tilde{\varphi}_{22} & 0 \\ 
0 & 0 & 0%
\end{array}%
\right) .  \label{MATRIX FI}
\end{equation}%
Using (\ref{G METRIC}) and (\ref{MATRIX FI}) we have%
\begin{eqnarray}
\tilde{g}(\tilde{\varphi}e_{1},e_{1}) &=&0=\tilde{\varphi}_{21}\text{ and \ }%
\tilde{g}(\tilde{\varphi}e_{2},e_{2})=0=\tilde{\varphi}_{12},
\label{MATRIXF2} \\
\tilde{g}(\tilde{\varphi}e_{1},e_{2}) &=&\tilde{\varphi}_{11}=-\tilde{g}%
(e_{1},\tilde{\varphi}e_{2})=-\tilde{\varphi}_{22}\text{ }  \notag
\end{eqnarray}%
Other hand we get%
\begin{equation}
\tilde{g}(\tilde{\varphi}e_{1},\tilde{\varphi}e_{2})\text{ }=\tilde{\varphi}%
_{11}\tilde{\varphi}_{22}=-g(e_{1},e_{2})=1.  \label{MATRIXF3}
\end{equation}%
From \ two last equation , $\tilde{\varphi}_{22}=\mp 1.$ This completes the
proof.
\end{proof}

Hence the tensor $\tilde{h}$ has the form $\left( 
\begin{array}{ccc}
0 & 0 & 0 \\ 
1 & 0 & 0 \\ 
0 & 0 & 0%
\end{array}%
\right) $ relative a pseudo-orthonormal basis $\{e_{1},e_{2},e_{3}\}$. In
this case, we call $\tilde{h}$ is of $\mathfrak{h}_{2}$ \textit{type}.

\begin{remark}
\label{with}Without loss of generality, we can assume that $\tilde{\varphi}%
e_{1}=e_{1}$ $\tilde{\varphi}e_{2}=-e_{2}.$ Moreover one can easily get $%
\tilde{h}^{2}=0$ but $\tilde{h}\neq 0.$
\end{remark}

\begin{lemma}
\label{de2}Let $(M,\tilde{\varphi},\xi ,\eta ,\tilde{g})$ be a $3$%
-dimensional paracontact metric manifold with $\tilde{h}$ of $\mathfrak{h}%
_{2}$ type. Then for the covariant derivative on $\mathcal{U}$ the following
equations are valid 
\begin{eqnarray}
i)\text{ }\tilde{\nabla}_{e_{1}}e_{1} &=&-b_{2}e_{1}+\xi ,\text{ \ }ii)\text{
}\tilde{\nabla}_{e_{1}}e_{2}=b_{2}e_{2}+\xi ,\text{ \ }iii)\text{ }\tilde{%
\nabla}_{e_{1}}\xi =-e_{1}-e_{2},  \notag \\
iv)\text{ }\tilde{\nabla}_{e_{2}}e_{1} &=&-\tilde{b}_{2}e_{1}-\xi ,\text{ \ }%
v)\text{ }\tilde{\nabla}_{e_{2}}e_{2}=\tilde{b}_{2}e_{2},\text{ \ }vi)\text{ 
}\tilde{\nabla}_{e_{2}}\xi =e_{2},  \notag \\
vii)\text{ }\tilde{\nabla}_{\xi }e_{1} &=&a_{2}e_{1},\text{ \ }viii)\text{ }%
\tilde{\nabla}_{\xi }e_{2}=-a_{2}e_{2},\text{ \ \ }  \notag \\
ix)\text{ }[e_{1},\xi ] &=&-(1+a_{2})e_{1}-e_{2},\text{ \ }x)\text{ }%
[e_{2},\xi ]=(1+a_{2})e_{2},\text{ \ }  \label{iremm0} \\
xi)\text{ }[e_{1},e_{2}] &=&\tilde{b}_{2}e_{1}+b_{2}e_{2}+2\xi .  \notag
\end{eqnarray}%
where $a_{2}=\tilde{g}(\tilde{\nabla}_{\xi }e_{1},e_{2}),$ $b_{2}=\tilde{g}(%
\tilde{\nabla}_{e_{1}}e_{2},e_{1})$ and $\tilde{b}_{2}=-\frac{1}{2}$ $\tilde{%
\sigma}(e_{1})=-\frac{1}{2}\tilde{S}(\xi ,e_{1})$ .
\end{lemma}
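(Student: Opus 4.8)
The plan is to obtain all eleven formulas from three sources only: the structure equation \eqref{nablaxi}, metric compatibility of $\tilde{\nabla}$, and torsion-freeness; the single point that needs genuine curvature information is the identification of $\tilde{b}_{2}$.

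First I would compute the derivatives of $\xi$. Feeding $X=e_{1},e_{2},\xi$ into \eqref{nablaxi}, $\tilde{\nabla}_{X}\xi=-\tilde{\varphi}X+\tilde{\varphi}\tilde{h}X$, and using $\tilde{\varphi}e_{1}=e_{1}$, $\tilde{\varphi}e_{2}=-e_{2}$, $\tilde{h}e_{1}=e_{2}$, $\tilde{h}e_{2}=0$ from Lemma \ref{II} and Remark \ref{with}, gives at once $\tilde{\nabla}_{e_{1}}\xi=-e_{1}-e_{2}$, $\tilde{\nabla}_{e_{2}}\xi=e_{2}$ and $\tilde{\nabla}_{\xi}\xi=0$, i.e.\ (iii) and (vi).

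Next I would determine the remaining derivatives $\tilde{\nabla}_{e_{i}}e_{j}$ from metric compatibility alone. The bookkeeping device is that, in the frame with $\tilde{g}(e_{1},e_{2})=\tilde{g}(\xi,\xi)=1$ and all other products zero, a vector $V$ has $e_{1}$-component $\tilde{g}(V,e_{2})$, $e_{2}$-component $\tilde{g}(V,e_{1})$ and $\xi$-component $\tilde{g}(V,\xi)$. Differentiating $\tilde{g}(e_{1},e_{1})=\tilde{g}(e_{2},e_{2})=0$, $\tilde{g}(e_{1},e_{2})=\tilde{g}(\xi,\xi)=1$ and $\tilde{g}(e_{i},\xi)=0$ along each frame vector, and substituting the $\tilde{\nabla}_{X}\xi$ just found, forces all components except three to take prescribed values; the three surviving free functions I name $a_{2}=\tilde{g}(\tilde{\nabla}_{\xi}e_{1},e_{2})$, $b_{2}=\tilde{g}(\tilde{\nabla}_{e_{1}}e_{2},e_{1})$ and $\tilde{b}_{2}=\tilde{g}(\tilde{\nabla}_{e_{2}}e_{2},e_{2})$. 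This produces (i), (ii), (iv), (v), (vii), (viii); as a consistency check one can verify that the integrability identity \eqref{INTEGRABLE}, rewritten as $\tilde{\nabla}_{X}(\tilde{\varphi}Y)-\tilde{\varphi}\tilde{\nabla}_{X}Y=-\tilde{g}(X-\tilde{h}X,Y)\xi+\eta(Y)(X-\tilde{h}X)$, is automatically satisfied and adds no new relation. The brackets (ix), (x), (xi) then drop out of $[U,W]=\tilde{\nabla}_{U}W-\tilde{\nabla}_{W}U$.

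The main obstacle is the last assertion, $\tilde{b}_{2}=-\tfrac{1}{2}\tilde{\sigma}(e_{1})=-\tfrac{1}{2}\tilde{S}(\xi,e_{1})$, which is where curvature enters. For this I would compute $\tilde{R}(e_{1},e_{2})\xi=\tilde{\nabla}_{e_{1}}\tilde{\nabla}_{e_{2}}\xi-\tilde{\nabla}_{e_{2}}\tilde{\nabla}_{e_{1}}\xi-\tilde{\nabla}_{[e_{1},e_{2}]}\xi$ from the formulas already established; the contributions of $a_{2}$ and $b_{2}$ cancel and one is left with $\tilde{R}(e_{1},e_{2})\xi=2\tilde{b}_{2}e_{2}$. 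Tracing this to form the Ricci tensor, and being careful that with the null directions $e_{1},e_{2}$ only one trace term survives (so that $\tilde{S}(\xi,e_{1})=\tilde{g}(\tilde{R}(e_{2},e_{1})\xi,e_{1})$, the remaining contributions vanishing by $\tilde{R}(e_{1},e_{1})=0$ and by the skew-symmetry $\tilde{g}(\tilde{R}(\xi,e_{1})\xi,\xi)=0$), yields $\tilde{S}(\xi,e_{1})=-2\tilde{b}_{2}$, the desired identity. The only real care needed throughout lies in the sign and index conventions imposed by the null vectors; the rest is routine.
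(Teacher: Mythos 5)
For items (i)--(xi) your route coincides with the paper's own: the paper likewise reads (iii) and (vi) off \eqref{nablaxi}, recovers the remaining covariant derivatives by expanding in the pseudo-orthonormal frame and using metric compatibility (its displayed computation of $\tilde{\nabla}_{\xi }e_{2}$ is exactly your bookkeeping device), and obtains the brackets from torsion-freeness. One slip needs correcting: your naming $\tilde{b}_{2}=\tilde{g}(\tilde{\nabla}_{e_{2}}e_{2},e_{2})$ contradicts your own component rule, and in fact that quantity vanishes identically, since differentiating $\tilde{g}(e_{2},e_{2})=0$ gives $\tilde{g}(\tilde{\nabla}_{X}e_{2},e_{2})=0$ for every $X$; so, as written, $\tilde{b}_{2}$ would not be a free function at all. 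The third surviving component is the $e_{2}$-component of $\tilde{\nabla}_{e_{2}}e_{2}$, namely $\tilde{b}_{2}=\tilde{g}(\tilde{\nabla}_{e_{2}}e_{2},e_{1})=-\tilde{g}(\tilde{\nabla}_{e_{2}}e_{1},e_{2})$; your subsequent curvature computation tacitly uses $\tilde{b}_{2}$ in this correct sense, so nothing downstream breaks.

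Where you genuinely differ from the paper is in proving $\tilde{b}_{2}=-\tfrac{1}{2}\tilde{S}(\xi ,e_{1})$. The paper equates two expressions for $\tilde{R}(e_{1},e_{2})\xi $: the three-dimensional decomposition \eqref{THREE DIM CURVATURE} gives $-\tilde{\sigma}(e_{1})e_{2}+\tilde{\sigma}(e_{2})e_{1}$, while \eqref{CURVATURE 4} (i.e. integrability) gives $2\tilde{b}_{2}e_{2}$; comparing the two yields $\tilde{\sigma}(e_{1})=-2\tilde{b}_{2}$ together with the by-product $\tilde{\sigma}(e_{2})=0$, recorded as \eqref{iremm3a} and used later in Lemma \ref{p4}. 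You instead compute $\tilde{R}(e_{1},e_{2})\xi =2\tilde{b}_{2}e_{2}$ directly from the connection formulas (correct; the $a_{2}$ and $b_{2}$ contributions do cancel) and then contract with the pseudo-orthonormal trace, using the curvature symmetries to kill the two spurious terms. This is more self-contained — it needs neither \eqref{THREE DIM CURVATURE} nor \eqref{CURVATURE 4} — at the price of careful index bookkeeping in the null frame; note that it does not by itself record $\tilde{\sigma}(e_{2})=0$, but one more trace of the same type, $\tilde{S}(\xi ,e_{2})=\tilde{g}(2\tilde{b}_{2}e_{2},e_{2})=0$, supplies it.
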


\begin{proof}
Replacing $X$ \ by $e_{1}$ and $Y$ by $e_{2}$ in equation $\tilde{\nabla}\xi
=-\tilde{\varphi}+\tilde{\varphi}\tilde{h}$, we have $iii),vi).$

For the proof of $viii)$ we have%
\begin{eqnarray*}
\tilde{\nabla}_{\xi }e_{2} &=&\tilde{g}(\tilde{\nabla}_{\xi
}e_{2},e_{2})e_{1}+\tilde{g}(\tilde{\nabla}_{\xi }e_{2},e_{1})e_{2}+\tilde{g}%
(\tilde{\nabla}_{\xi }e_{2},\xi )\xi \\
&=&-\tilde{g}(e_{2},\tilde{\nabla}_{\xi }e_{1})e_{2}.
\end{eqnarray*}

If the function $a_{2}$ is defined as $\tilde{g}(\tilde{\nabla}_{\xi
}e_{1},e_{2})$ then $\tilde{\nabla}_{_{\xi }}e_{2}=-a_{2}e_{2}$. The proofs
of other covariant derivative equalities are similar to $ii).$

Putting $X=e_{1},$ $Y=e_{2}$ and \ $Z=\xi $ in the equation (\ref{THREE DIM
CURVATURE}), we have%
\begin{equation}
\tilde{R}(e_{1},e_{2})\xi =-\tilde{\sigma}(e_{1})e_{2}+\tilde{\sigma}%
(e_{2})e_{1}.  \label{iremm2}
\end{equation}%
On the other hand, by using (\ref{CURVATURE 4}), we get 
\begin{eqnarray}
\tilde{R}(e_{1},e_{2})\xi &=&(\tilde{\nabla}_{e_{1}}\tilde{\varphi}\tilde{h}%
)e_{2}-(\tilde{\nabla}_{e_{2}}\tilde{\varphi}\tilde{h})e_{1}  \notag \\
&=&2\tilde{b}_{2}e_{2}.  \label{iremm3}
\end{eqnarray}%
Comparing (\ref{iremm3}) with (\ref{iremm2}), we obtain%
\begin{equation}
\tilde{\sigma}(e_{1})=-2\tilde{b}_{2},~\ \tilde{\sigma}(e_{2})=0=\tilde{S}%
(\xi ,e_{2}).\   \label{iremm3a}
\end{equation}%
Hence, the function $\tilde{b}_{2}$ is obtained from the last equation.
\end{proof}

Next, we derive a useful formula for $\tilde{\nabla}_{\xi }\tilde{h}.$

\begin{proposition}
\label{p2}Let $(M,\tilde{\varphi},\xi ,\eta ,\tilde{g})$ be a $3$%
-dimensional paracontact metric manifold with $\tilde{h}$ of $\mathfrak{h}%
_{2}$ type. Then we have%
\begin{equation}
\tilde{\nabla}_{\xi }\tilde{h}=2a_{2}\tilde{\varphi}\tilde{h},
\label{iremm4}
\end{equation}%
on $\mathcal{U}$ .
\end{proposition}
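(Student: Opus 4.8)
The plan is to verify the operator identity \eqref{iremm4} by evaluating both sides on the local pseudo-orthonormal frame $\{e_1,e_2,\xi\}$ furnished by Lemma \ref{II} and Remark \ref{with}, since two $(1,1)$-tensors agree iff they agree on a frame. First I would record the actions of $\tilde{h}$ and $\tilde{\varphi}$ on the frame: by Remark \ref{with} we have $\tilde{h}e_1=e_2$, $\tilde{h}e_2=0$, $\tilde{h}\xi=0$ together with $\tilde{\varphi}e_1=e_1$, $\tilde{\varphi}e_2=-e_2$, $\tilde{\varphi}\xi=0$. Composing these gives the right-hand side values $\tilde{\varphi}\tilde{h}e_1=-e_2$, while $\tilde{\varphi}\tilde{h}e_2=0$ and $\tilde{\varphi}\tilde{h}\xi=0$.

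Next I would expand the left-hand side using the Leibniz rule for the covariant derivative of a $(1,1)$-tensor, namely $(\tilde{\nabla}_\xi\tilde{h})X=\tilde{\nabla}_\xi(\tilde{h}X)-\tilde{h}(\tilde{\nabla}_\xi X)$, and feed in the directional derivatives along $\xi$ supplied by items $vii)$ and $viii)$ of Lemma \ref{de2}, that is $\tilde{\nabla}_\xi e_1=a_2e_1$ and $\tilde{\nabla}_\xi e_2=-a_2e_2$, supplemented by $\tilde{\nabla}_\xi\xi=0$ (which follows from $\tilde{\nabla}\xi=-\tilde{\varphi}+\tilde{\varphi}\tilde{h}$ and $\tilde{\varphi}\xi=\tilde{h}\xi=0$). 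Evaluating on $e_1$ gives $(\tilde{\nabla}_\xi\tilde{h})e_1=\tilde{\nabla}_\xi e_2-\tilde{h}(a_2e_1)=-a_2e_2-a_2e_2=-2a_2e_2$, which coincides with $2a_2\tilde{\varphi}\tilde{h}e_1=-2a_2e_2$. On $e_2$ both $\tilde{\nabla}_\xi(\tilde{h}e_2)$ and $\tilde{h}(\tilde{\nabla}_\xi e_2)=-a_2\tilde{h}e_2$ vanish, matching $2a_2\tilde{\varphi}\tilde{h}e_2=0$; and on $\xi$ every term is zero, matching $2a_2\tilde{\varphi}\tilde{h}\xi=0$.

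Since the two operators agree on the spanning set $\{e_1,e_2,\xi\}$, they agree as tensors on all of $\mathcal{U}$, establishing \eqref{iremm4}. I do not anticipate a genuine obstacle here: this is a direct frame computation, and the only point requiring minor care is the bookkeeping in the Leibniz rule, specifically remembering that $\tilde{h}$ must also act on the derivative $\tilde{\nabla}_\xi e_1=a_2e_1$ (producing the second $-a_2e_2$ that doubles the coefficient), so that the factor $2$ in $2a_2$ emerges correctly rather than a spurious factor $1$.
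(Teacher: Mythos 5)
Your proof is correct and follows essentially the same route as the paper: the paper's own proof also verifies the identity by evaluating $(\tilde{\nabla}_{\xi}\tilde{h})X$ and $2a_{2}\tilde{\varphi}\tilde{h}X$ on the frame $\{e_{1},e_{2},\xi\}$ using the covariant derivative formulas of Lemma \ref{de2}, merely leaving the Leibniz-rule bookkeeping implicit. Your explicit tracking of the term $\tilde{h}(\tilde{\nabla}_{\xi}e_{1})=a_{2}e_{2}$, which produces the factor $2$, is exactly the computation underlying the paper's displayed identities.
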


\begin{proof}
Using (\ref{iremm0}), we get%
\begin{eqnarray*}
(\tilde{\nabla}_{\xi }\tilde{h})\xi &=&0=(2a_{2}\tilde{\varphi}\tilde{h})\xi
, \\
(\tilde{\nabla}_{\xi }\tilde{h})e_{1} &=&-2a_{2}e_{2}=(2a_{2}\tilde{\varphi}%
\tilde{h})e_{1}, \\
(\tilde{\nabla}_{\xi }\tilde{h})e_{2} &=&0=(2a_{2}\tilde{\varphi}\tilde{h}%
)e_{2}.
\end{eqnarray*}
\end{proof}

Using (\ref{RICCI ZETA}), we have $\tilde{S}(\xi ,\xi )=-2.$ After
calculating $\tilde{h}^{2}-\tilde{\varphi}^{2}$ with respect to the basis
components, we get%
\begin{equation}
\tilde{h}^{2}\xi -\tilde{\varphi}^{2}\xi =\frac{\tilde{S}(\xi ,\xi )}{2}%
\tilde{\varphi}^{2}\xi =0,\text{ \ }\tilde{h}^{2}e_{1}-\tilde{\varphi}%
^{2}e_{1}=\frac{\tilde{S}(\xi ,\xi )}{2}\tilde{\varphi}^{2}e_{1},\text{ \ }%
\tilde{h}^{2}e_{2}-\tilde{\varphi}^{2}e_{2}=\frac{\tilde{S}(\xi ,\xi )}{2}%
\tilde{\varphi}^{2}e_{2}.  \label{iremm6}
\end{equation}%
So we have following

\begin{proposition}
\label{p3}Let $(M,\tilde{\varphi},\xi ,\eta ,\tilde{g})$ be a $3$%
-dimensional paracontact metric manifold. If $\tilde{h}$ is $\mathfrak{h}%
_{2} $ type then the following equation holds on $M.$%
\begin{equation}
\tilde{h}^{2}-\tilde{\varphi}^{2}=\frac{\tilde{S}(\xi ,\xi )}{2}\tilde{%
\varphi}^{2}.  \label{iremm5}
\end{equation}
\end{proposition}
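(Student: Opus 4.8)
The plan is to reduce the claimed tensorial identity to the two facts that really drive it: that the $\mathfrak{h}_2$ type forces $\tilde{h}^2 = 0$, and that consequently $\tilde{S}(\xi,\xi)$ equals the constant $-2$ in dimension three. Once these are in hand, the identity (\ref{iremm5}) collapses to something trivial, and the only remaining issue is to see that it propagates from the open set $\mathcal{U} = \{\tilde{h}\neq 0\}$ to all of $M$.

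First I would pin down the scalar factor on the right. By Remark \ref{with} the $\mathfrak{h}_2$ type gives $\tilde{h}^2 = 0$ on $\mathcal{U}$, so $\operatorname{tr}\tilde{h}^2 = 0$ there; since $\dim M = 3$ means $n = 1$, formula (\ref{RICCI ZETA}) yields $\tilde{S}(\xi,\xi) = -2n + \operatorname{tr}\tilde{h}^2 = -2$. Hence $\frac{\tilde{S}(\xi,\xi)}{2}\tilde{\varphi}^2 = -\tilde{\varphi}^2$, and the assertion (\ref{iremm5}) is equivalent to the single statement $\tilde{h}^2 = 0$.

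Next I would verify this by evaluating both operators on the adapted pseudo-orthonormal basis $\{e_1,e_2,e_3 = \xi\}$ of Lemma \ref{II}. Using $\tilde{h}e_1 = e_2$, $\tilde{h}e_2 = 0$ and $\tilde{h}\xi = 0$ one gets $\tilde{h}^2 e_1 = \tilde{h}e_2 = 0$, $\tilde{h}^2 e_2 = 0$ and $\tilde{h}^2\xi = 0$; on the other hand $\tilde{\varphi}^2 = I - \eta\otimes\xi$ fixes $e_1,e_2$ (since $\eta(e_1) = \tilde{g}(e_1,\xi) = 0 = \eta(e_2)$) and annihilates $\xi$. Comparing $\tilde{h}^2 - \tilde{\varphi}^2$ with $-\tilde{\varphi}^2$ on each basis vector reproduces exactly the three equalities (\ref{iremm6}), which is precisely the identity on $\mathcal{U}$.

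Finally I would extend the result to all of $M$. Both sides of (\ref{iremm5}) are continuous tensor fields agreeing on the open set $\mathcal{U}$; on the interior of its complement $\tilde{h}\equiv 0$, so $\tilde{h}^2 = 0$ and (\ref{RICCI ZETA}) again gives $\tilde{S}(\xi,\xi) = -2$, whence the identity reduces to $-\tilde{\varphi}^2 = -\tilde{\varphi}^2$. Since the union of $\mathcal{U}$ with this interior is dense in $M$, continuity forces (\ref{iremm5}) everywhere. There is essentially no analytic obstacle here, as the computation is short once $\tilde{h}^2 = 0$ is recognized; the only point demanding care is this density and continuity passage from $\mathcal{U}$ to $M$, which parallels the argument already used in the $\mathfrak{h}_1$ case.
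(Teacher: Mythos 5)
Your proof is correct and follows essentially the same route as the paper: both use Remark \ref{with} ($\tilde{h}^2=0$ for the $\mathfrak{h}_2$ type) together with (\ref{RICCI ZETA}) to get $\tilde{S}(\xi,\xi)=-2$, and then verify the identity componentwise on the adapted pseudo-orthonormal basis, which is exactly the computation (\ref{iremm6}). The only difference is that you make explicit the density/continuity argument extending the identity from $\mathcal{U}$ (and the interior of its complement) to all of $M$, a step the paper leaves implicit.
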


\begin{lemma}
\label{p4}Let $(M,\tilde{\varphi},\xi ,\eta ,\tilde{g})$ be a $3$%
-dimensional paracontact metric manifold with $\tilde{h}$ of $\mathfrak{h}%
_{2}$ type. Then the Ricci operator $\tilde{Q}$ is given by%
\begin{equation}
\tilde{Q}=\ddot{a}I+\ddot{b}\eta \otimes \xi -\tilde{\varphi}(\tilde{\nabla}%
_{\xi }\tilde{h})+\tilde{\sigma}(\tilde{\varphi}^{2})\otimes \xi +\tilde{%
\sigma}(e_{1})\eta \otimes e_{2},  \label{iremm7}
\end{equation}%
where $\ddot{a}$ and $\ddot{b}$ are smooth functions defined by $\ddot{a}$ $%
=1+\frac{r}{2}$ and $\ddot{b}=-3-\frac{r}{2},$ respectively.
\end{lemma}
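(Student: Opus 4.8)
The plan is to mirror, step by step, the derivation of Lemma \ref{Q} for the $\mathfrak{h}_{1}$ case, replacing the orthonormal-basis bookkeeping by the null (pseudo-orthonormal) basis $\{e_{1},e_{2},\xi\}$ appropriate to $\mathfrak{h}_{2}$ type. First I would start from the three-dimensional curvature identity (\ref{THREE DIM CURVATURE}) and set $Y=Z=\xi$ to obtain
\[
\tilde{l}X := \tilde{R}(X,\xi)\xi = \tilde{S}(\xi,\xi)X - \tilde{S}(X,\xi)\xi + \tilde{Q}X - \eta(X)\tilde{Q}\xi - \frac{r}{2}\left(X - \eta(X)\xi\right),
\]
exactly as at the opening of Lemma \ref{Q}. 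Independently, Zamkovoy's formula (\ref{irem000}) expresses $\tilde{R}(\xi,X)\xi$, and hence $\tilde{l}X$, through $\tilde{\varphi}^{2}$, $\tilde{h}^{2}$ and $\tilde{\varphi}(\tilde{\nabla}_{\xi}\tilde{h})$. Equating the two expressions and solving for $\tilde{Q}X$ produces the analog of (\ref{irem8}).

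The second step is to feed in the $\mathfrak{h}_{2}$-specific data. Since $\tilde{h}^{2}=0$ here, (\ref{RICCI ZETA}) gives $\tilde{S}(\xi,\xi)=-2$, and Proposition \ref{p3} (equation (\ref{iremm5})) supplies $\tilde{h}^{2} - \tilde{\varphi}^{2} = \frac{\tilde{S}(\xi,\xi)}{2}\tilde{\varphi}^{2}$; substituting this collapses the curvature terms into a single multiple of $\tilde{\varphi}^{2}$ plus the $\tilde{\varphi}(\tilde{\nabla}_{\xi}\tilde{h})$ term. I would then compute $\tilde{Q}\xi$ directly: contracting against the null frame and using $\tilde{g}(\tilde{Q}\xi, e_{i}) = \tilde{\sigma}(e_{i})$ together with $\tilde{\sigma}(e_{2})=0$ from (\ref{iremm3a}), one finds $\tilde{Q}\xi = \tilde{\sigma}(e_{1})e_{2} + \tilde{S}(\xi,\xi)\xi$, the $\mathfrak{h}_{2}$ counterpart of (\ref{irem9}).

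Finally, using $\tilde{S}(X,\xi) = \tilde{S}(\tilde{\varphi}^{2}X,\xi) + \eta(X)\tilde{S}(\xi,\xi)$ to split off the $\eta(X)$ part and inserting the expression just found for $\tilde{Q}\xi$, I would collect terms and read off the coefficients, giving $\ddot{a} = 1 + \frac{r}{2}$ and $\ddot{b} = -3 - \frac{r}{2}$ and the stated form (\ref{iremm7}). The main point requiring care is the null-basis bookkeeping: because $\tilde{g}(e_{1},e_{2})=1$ while $\tilde{g}(e_{1},e_{1})=\tilde{g}(e_{2},e_{2})=0$, the pairing $\tilde{g}(\tilde{Q}\xi,e_{i})$ picks out the \emph{opposite} component, which is exactly why the correction term appears as the asymmetric $\tilde{\sigma}(e_{1})\eta\otimes e_{2}$ rather than in the symmetric shape seen in Lemma \ref{Q}; keeping the raised and lowered indices straight in the null frame, and using $\tilde{\sigma}(e_{2})=0$, is what makes the single off-diagonal term survive. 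The remainder is the routine substitution of (\ref{iremm4}) for $\tilde{\nabla}_{\xi}\tilde{h}$.
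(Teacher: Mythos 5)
Your proposal is correct and follows essentially the same route as the paper's proof: the three-dimensional curvature identity with $Y=Z=\xi$, Zamkovoy's formula (\ref{irem000}), the $\mathfrak{h}_{2}$ facts $\tilde{h}^{2}=0$ and $\tilde{S}(\xi,\xi)=-2$, the computation $\tilde{Q}\xi=\tilde{\sigma}(e_{1})e_{2}+\tilde{S}(\xi,\xi)\xi$ via (\ref{iremm3a}), and the final collection of coefficients $\ddot{a}=1+\frac{r}{2}$, $\ddot{b}=-3-\frac{r}{2}$. Your explicit remark about the null-frame pairing $\tilde{g}(\tilde{Q}\xi,e_{i})$ picking out the opposite component is a correct clarification of a step the paper leaves implicit; note only that the substitution of (\ref{iremm4}) you mention at the end is not actually needed, since the lemma's statement keeps $\tilde{\varphi}(\tilde{\nabla}_{\xi}\tilde{h})$ unexpanded.
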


\begin{proof}
For $3$-dimensional case, we have%
\begin{equation*}
\tilde{l}X=\tilde{S}(\xi ,\xi )X-\tilde{S}(X,\xi )\xi +\tilde{Q}X-\eta (X)%
\tilde{Q}\xi -\frac{r}{2}(X-\eta (X)\xi ),
\end{equation*}%
where $\tilde{l}$ denotes the Jacobi operator and $X$ is a vector field.
Using (\ref{irem000}), the last equation implies%
\begin{equation}
\tilde{Q}X=-\tilde{\varphi}^{2}X+\tilde{h}^{2}X-\tilde{\varphi}(\tilde{\nabla%
}_{\xi }\tilde{h})X-\tilde{S}(\xi ,\xi )X+\tilde{S}(X,\xi )\xi +\eta (X)%
\tilde{Q}\xi +\frac{r}{2}(X-\eta (X)\xi ).  \label{QX1}
\end{equation}%
Using $\tilde{\varphi}^{2}=I-\eta \otimes \xi ,$ we get $\tilde{S}(X,\xi )=%
\tilde{S}(\tilde{\varphi}^{2}X,\xi )+\eta (X)\tilde{S}(\xi ,\xi )$. So (\ref%
{QX1}) becomes%
\begin{equation}
\tilde{Q}X=\frac{\tilde{S}(\xi ,\xi )}{2}\tilde{\varphi}^{2}X-\tilde{\varphi}%
(\tilde{\nabla}_{\xi }\tilde{h})X-\tilde{S}(\xi ,\xi )X+\tilde{S}(\tilde{%
\varphi}^{2}X,\xi )\xi +\eta (X)\tilde{S}(\xi ,\xi )\xi +\eta (X)\tilde{Q}%
\xi +\frac{r}{2}\tilde{\varphi}^{2}X.  \label{iremm8}
\end{equation}%
By the basis pseudo-orthonormal $\{e_{1},e_{2},\xi \}$ and (\ref{iremm3a}),
it follows that 
\begin{equation}
\tilde{Q}\xi =\tilde{\sigma}(e_{1})e_{2}+\tilde{S}(\xi ,\xi )\xi .
\label{iremm9}
\end{equation}%
Using (\ref{iremm9}) in (\ref{iremm8}), we have%
\begin{eqnarray}
\tilde{Q}X &=&\left( 1+\frac{r}{2}\right) X+\left( -3-\frac{r}{2}\right)
\eta (X)\xi  \label{iremm10} \\
&&-\tilde{\varphi}(\tilde{\nabla}_{\xi }\tilde{h})X+\tilde{\sigma}(\tilde{%
\varphi}^{2}X)\xi +\eta (X)\tilde{\sigma}(e_{1})e_{2},  \notag
\end{eqnarray}%
for arbitrary vector field $X$. Hence, proof comes from (\ref{iremm10}).
\end{proof}

A consequence of Lemma \ref{p4}, we can give the components of the Ricci
operator $\tilde{Q}$ by following,

\begin{eqnarray}
\tilde{Q}\xi &=&(\ddot{a}+\ddot{b})\xi +\tilde{\sigma}(e_{1})e_{2},  \notag
\\
\tilde{Q}e_{1} &=&\tilde{\sigma}(e_{1})\xi +\ddot{a}e_{1}-2ae_{2},
\label{iremm7a} \\
\tilde{Q}e_{2} &=&\ddot{a}e_{2}.  \notag
\end{eqnarray}

\textbf{The tensor }$\tilde{h}$\textbf{\ has the canonical form (III).} Let $%
(M,\tilde{\varphi},\xi ,\eta ,\tilde{g})$ be a $3$-dimensional paracontact
metric manifold and $p$ is a point of $M$. Then there exists a local
orthonormal $\tilde{\varphi}$-basis $\{\tilde{e},\tilde{\varphi}\tilde{e}%
,\xi \}$ in a neighborhood of $p$ where $-\tilde{g}(\tilde{e},\tilde{e})=%
\tilde{g}(\tilde{\varphi}\tilde{e},\tilde{\varphi}\tilde{e})=\tilde{g}(\xi
,\xi )=1.$ Now, let $U_{1}$ be the open subset of $M$ where $\tilde{h}\neq 0$
and let $U_{2}$ be the open subset of points $p\in M$ such that $\tilde{h}=0$
in a neighborhood of $p.$ $U_{1}\cup $ $U_{2}$ is an open subset of $M$. For
every $p\in U_{1}$ there exists an open neighborhood of $p$ such that $%
\tilde{h}\tilde{e}=\tilde{\lambda}\tilde{\varphi}\tilde{e},\tilde{h}\tilde{%
\varphi}\tilde{e}=-\tilde{\lambda}\tilde{e}$ and $\tilde{h}\xi =0$ where $%
\tilde{\lambda}$ is a non-vanishing smooth function. Since $tr\tilde{h}=0,\ $%
the matrix form of $\tilde{h}$ is$\ $given $\ $by%
\begin{equation}
\tilde{h}=\left( 
\begin{array}{ccc}
0 & -\tilde{\lambda} & 0 \\ 
\tilde{\lambda} & 0 & 0 \\ 
0 & 0 & 0%
\end{array}%
\right)  \label{A3}
\end{equation}%
with respect to local orthonormal basis $\{\tilde{e},\tilde{\varphi}\tilde{e}%
,\xi \}.$ In this case, we say that $\tilde{h}$ is of $\mathfrak{h}_{3}$ 
\textit{type}.

\begin{lemma}
\label{lem3}Let $(M,\tilde{\varphi},\xi ,\eta ,\tilde{g})$ be a $3$%
-dimensional paracontact metric manifold with $\tilde{h}$ of $\mathfrak{h}%
_{3}$ type. Then for the covariant derivative on $U_{1}$ the following
equations are valid 
\begin{eqnarray}
i)\text{ }\tilde{\nabla}_{\tilde{e}}\tilde{e} &=&a_{3}\tilde{\varphi}\tilde{e%
}+\tilde{\lambda}\xi ,\text{ \ }ii)\text{ }\tilde{\nabla}_{\tilde{e}}\tilde{%
\varphi}\tilde{e}=a_{3}\tilde{e}+\xi ,\text{ \ }iii)\text{ }\tilde{\nabla}_{%
\tilde{e}}\xi =-\tilde{\varphi}\tilde{e}+\tilde{\lambda}\tilde{e},  \notag \\
iv)\text{ }\tilde{\nabla}_{\tilde{\varphi}\tilde{e}}\tilde{e} &=&b_{3}\tilde{%
\varphi}\tilde{e}-\xi ,\text{ \ }v)\text{ }\tilde{\nabla}_{\tilde{\varphi}%
\tilde{e}}\tilde{\varphi}\tilde{e}=b_{3}\tilde{e}+\tilde{\lambda}\xi ,\text{
\ }vi)\text{ }\tilde{\nabla}_{\tilde{\varphi}\tilde{e}}\xi =-\tilde{e}-%
\tilde{\lambda}\tilde{\varphi}\tilde{e},  \notag \\
vii)\text{ }\tilde{\nabla}_{\xi }\tilde{e} &=&\tilde{b}_{3}\tilde{\varphi}%
\tilde{e},\text{ \ }viii)\text{ }\tilde{\nabla}_{\xi }\tilde{\varphi}\tilde{e%
}=\tilde{b}_{3}\tilde{e},\text{ \ \ }  \notag \\
ix)\text{ }[\tilde{e},\xi ] &=&\tilde{\lambda}\tilde{e}-(1+\tilde{b}_{3})%
\tilde{\varphi}\tilde{e},\text{ \ }x)\text{ }[\tilde{\varphi}\tilde{e},\xi
]=-(1+\tilde{b}_{3})\tilde{e}-\tilde{\lambda}\tilde{\varphi}\tilde{e},\text{
\ }xi)\text{ }[\tilde{e},\tilde{\varphi}\tilde{e}]=a_{3}\tilde{e}-b_{3}%
\tilde{\varphi}\tilde{e}+2\xi .  \label{iremmm0}
\end{eqnarray}%
where $a_{3}$, $b_{3}$ and $\tilde{b}_{3}$ are defined by%
\begin{equation*}
a_{3}=-\frac{1}{2\tilde{\lambda}}\left[ \sigma (\tilde{\varphi}\tilde{e})+(%
\tilde{\varphi}\tilde{e})(\tilde{\lambda})\right] ,\text{ \ }\tilde{\sigma}(%
\tilde{e})=S(\xi ,\tilde{e}),\text{\ }
\end{equation*}%
\begin{equation*}
b_{3}=\frac{1}{2\tilde{\lambda}}\left[ \sigma (\tilde{e})-\tilde{e}(\tilde{%
\lambda})\right] \text{\ },\text{ \ }\tilde{\sigma}(\tilde{\varphi}\tilde{e}%
)=S(\xi ,\tilde{\varphi}\tilde{e}),\text{\ \ }
\end{equation*}%
\begin{equation*}
\tilde{b}_{3}=\tilde{g}(\tilde{\nabla}_{\xi }\tilde{e},\tilde{\varphi}\tilde{%
e})
\end{equation*}%
respectively.
\end{lemma}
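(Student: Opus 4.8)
The plan is to determine, one at a time, all the connection coefficients of $\tilde{\nabla}$ in the frame $\{\tilde{e},\tilde{\varphi}\tilde{e},\xi\}$: first I fix those forced by the paracontact structure, leaving only the three ``gauge'' functions $a_3,b_3,\tilde{b}_3$, and then I pin down $a_3$ and $b_3$ by a curvature comparison. Recall from the hypotheses that $\tilde{g}(\tilde{e},\tilde{e})=-1$, $\tilde{g}(\tilde{\varphi}\tilde{e},\tilde{\varphi}\tilde{e})=\tilde{g}(\xi,\xi)=1$, that $\eta(\tilde{e})=\eta(\tilde{\varphi}\tilde{e})=0$, and that $\tilde{h}\tilde{e}=\tilde{\lambda}\tilde{\varphi}\tilde{e}$, $\tilde{h}\tilde{\varphi}\tilde{e}=-\tilde{\lambda}\tilde{e}$, $\tilde{h}\xi=0$.

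I would first read off the derivatives of $\xi$ directly from $\tilde{\nabla}\xi=-\tilde{\varphi}+\tilde{\varphi}\tilde{h}$, see \eqref{nablaxi}. Since $\tilde{\varphi}^{2}\tilde{e}=\tilde{e}$, one gets $\tilde{\nabla}_{\tilde{e}}\xi=-\tilde{\varphi}\tilde{e}+\tilde{\varphi}(\tilde{\lambda}\tilde{\varphi}\tilde{e})=\tilde{\lambda}\tilde{e}-\tilde{\varphi}\tilde{e}$, which is $iii)$; likewise $\tilde{\nabla}_{\tilde{\varphi}\tilde{e}}\xi=-\tilde{e}-\tilde{\lambda}\tilde{\varphi}\tilde{e}$, which is $vi)$, and $\tilde{\nabla}_{\xi}\xi=0$. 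Next I use metric compatibility: because the norms $\tilde{g}(\tilde{e},\tilde{e})$, $\tilde{g}(\tilde{\varphi}\tilde{e},\tilde{\varphi}\tilde{e})$ are constant, each $\tilde{\nabla}_{X}\tilde{e}$ is $\tilde{g}$-orthogonal to $\tilde{e}$, hence a combination of $\tilde{\varphi}\tilde{e}$ and $\xi$ only; its $\xi$-component is forced by $\tilde{g}(\tilde{\nabla}_{X}\tilde{e},\xi)=-\tilde{g}(\tilde{e},\tilde{\nabla}_{X}\xi)$ together with $iii)$, $vi)$ and $\tilde{\nabla}_{\xi}\xi=0$. This leaves exactly one free function in each derivative, namely $a_3:=\tilde{g}(\tilde{\nabla}_{\tilde{e}}\tilde{e},\tilde{\varphi}\tilde{e})$, $b_3:=\tilde{g}(\tilde{\nabla}_{\tilde{\varphi}\tilde{e}}\tilde{e},\tilde{\varphi}\tilde{e})$, $\tilde{b}_3:=\tilde{g}(\tilde{\nabla}_{\xi}\tilde{e},\tilde{\varphi}\tilde{e})$, and yields $i)$, $iv)$, $vii)$.

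The derivatives of $\tilde{\varphi}\tilde{e}$ in $ii)$, $v)$, $viii)$ then follow from $\tilde{\nabla}_{X}(\tilde{\varphi}\tilde{e})=(\tilde{\nabla}_{X}\tilde{\varphi})\tilde{e}+\tilde{\varphi}\,\tilde{\nabla}_{X}\tilde{e}$, where $(\tilde{\nabla}_{X}\tilde{\varphi})\tilde{e}$ is evaluated from the integrability identity \eqref{INTEGRABLE}; for instance $(\tilde{\nabla}_{\tilde{e}}\tilde{\varphi})\tilde{e}=-\tilde{g}(\tilde{e}-\tilde{h}\tilde{e},\tilde{e})\xi=\xi$, whence $\tilde{\nabla}_{\tilde{e}}\tilde{\varphi}\tilde{e}=\xi+\tilde{\varphi}(a_3\tilde{\varphi}\tilde{e}+\tilde{\lambda}\xi)=a_3\tilde{e}+\xi$, which is $ii)$; the cases $X=\tilde{\varphi}\tilde{e},\xi$ are analogous. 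Finally the brackets $ix)$--$xi)$ are immediate from $[X,Y]=\tilde{\nabla}_{X}Y-\tilde{\nabla}_{Y}X$ applied to the pairs of formulas just obtained.

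The substantive point, and the main obstacle, is to identify $a_3$ and $b_3$ with the stated Ricci expressions, so I would compute $\tilde{R}(\tilde{e},\tilde{\varphi}\tilde{e})\xi$ in two independent ways. On one side, expanding $\tilde{R}(\tilde{e},\tilde{\varphi}\tilde{e})\xi=\tilde{\nabla}_{\tilde{e}}\tilde{\nabla}_{\tilde{\varphi}\tilde{e}}\xi-\tilde{\nabla}_{\tilde{\varphi}\tilde{e}}\tilde{\nabla}_{\tilde{e}}\xi-\tilde{\nabla}_{[\tilde{e},\tilde{\varphi}\tilde{e}]}\xi$ with the formulas $i)$--$xi)$ gives, after the $\xi$-parts cancel, $\tilde{R}(\tilde{e},\tilde{\varphi}\tilde{e})\xi=-(2\tilde{\lambda}a_3+(\tilde{\varphi}\tilde{e})(\tilde{\lambda}))\tilde{e}-(2\tilde{\lambda}b_3+\tilde{e}(\tilde{\lambda}))\tilde{\varphi}\tilde{e}$. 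On the other side, substituting $X=\tilde{e}$, $Y=\tilde{\varphi}\tilde{e}$, $Z=\xi$ into the three-dimensional curvature identity \eqref{THREE DIM CURVATURE} and using $\eta(\tilde{e})=\eta(\tilde{\varphi}\tilde{e})=0$ collapses all but two terms to $\tilde{R}(\tilde{e},\tilde{\varphi}\tilde{e})\xi=\tilde{\sigma}(\tilde{\varphi}\tilde{e})\tilde{e}-\tilde{\sigma}(\tilde{e})\tilde{\varphi}\tilde{e}$, where $\tilde{\sigma}=\tilde{S}(\xi,\cdot)$. Matching the $\tilde{e}$- and $\tilde{\varphi}\tilde{e}$-components of the two expressions yields $\tilde{\sigma}(\tilde{\varphi}\tilde{e})=-2\tilde{\lambda}a_3-(\tilde{\varphi}\tilde{e})(\tilde{\lambda})$ and $\tilde{\sigma}(\tilde{e})=2\tilde{\lambda}b_3+\tilde{e}(\tilde{\lambda})$, and solving for $a_3,b_3$ gives exactly the asserted formulas. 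I expect the only delicate part to be the bookkeeping in the first computation of $\tilde{R}(\tilde{e},\tilde{\varphi}\tilde{e})\xi$, where one must track the directional derivatives $\tilde{e}(\tilde{\lambda})$, $(\tilde{\varphi}\tilde{e})(\tilde{\lambda})$ and the products of connection coefficients, and check that the spurious $\xi$-component indeed vanishes.
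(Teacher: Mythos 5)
Your proposal is correct and follows essentially the same route as the paper: fix $iii)$, $vi)$ from $\tilde{\nabla}\xi=-\tilde{\varphi}+\tilde{\varphi}\tilde{h}$, determine the remaining covariant derivatives by metric compatibility up to the three functions $a_{3},b_{3},\tilde{b}_{3}$, and then identify $a_{3},b_{3}$ by equating two computations of $\tilde{R}(\tilde{e},\tilde{\varphi}\tilde{e})\xi$, one of which uses the three-dimensional identity (\ref{THREE DIM CURVATURE}). The only (cosmetic) difference is that you expand $\tilde{R}(\tilde{e},\tilde{\varphi}\tilde{e})\xi$ directly from the definition of the curvature tensor, whereas the paper evaluates it via the identity (\ref{CURVATURE 4}) in terms of $(\tilde{\nabla}\tilde{\varphi}\tilde{h})$; both yield the same expression $(-2a_{3}\tilde{\lambda}-(\tilde{\varphi}\tilde{e})(\tilde{\lambda}))\tilde{e}+(-2b_{3}\tilde{\lambda}-\tilde{e}(\tilde{\lambda}))\tilde{\varphi}\tilde{e}$.
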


\begin{proof}
Replacing $X$ $\ $by $\tilde{e}$ and $Y$ $\ $by $\tilde{\varphi}\tilde{e}$
in equation $\tilde{\nabla}\xi =-\tilde{\varphi}+\tilde{\varphi}\tilde{h}$,
we have $iii),vi).$

For the proof of $viii)$ we \ have%
\begin{eqnarray*}
\tilde{\nabla}_{\xi }\tilde{\varphi}\tilde{e} &=&-\tilde{g}(\tilde{\nabla}%
_{\xi }\tilde{\varphi}\tilde{e},\tilde{e})\tilde{e}+\tilde{g}(\tilde{\nabla}%
_{\xi }\tilde{\varphi}\tilde{e},\tilde{\varphi}\tilde{e})\tilde{\varphi}%
\tilde{e}+\tilde{g}(\tilde{\nabla}_{\xi }\tilde{\varphi}\tilde{e},\xi )\xi \\
&=&\tilde{g}(\tilde{\varphi}\tilde{e},\tilde{\nabla}_{\xi }\tilde{e})\tilde{e%
},
\end{eqnarray*}

where $\tilde{b}_{3}$ $=\tilde{g}(\tilde{\nabla}_{\xi }\tilde{e},\tilde{%
\varphi}\tilde{e}).$ So we obtain $\tilde{\nabla}_{\xi }\tilde{\varphi}%
\tilde{e}=\tilde{b}_{3}\tilde{e}$ . The proofs of other covariant derivative
equalities are similar to $viii).$

Putting $X=\tilde{e},$ $Y=\tilde{\varphi}\tilde{e},$ \ $Z=\xi $ in the
equation (\ref{THREE DIM CURVATURE}), we have%
\begin{equation*}
\tilde{R}(\tilde{e},\tilde{\varphi}\tilde{e})\xi =-\tilde{g}(\tilde{Q}\tilde{%
e},\xi )\tilde{\varphi}\tilde{e}+\tilde{g}(\tilde{Q}\tilde{\varphi}\tilde{e}%
,\xi )\tilde{e}.
\end{equation*}%
Since $\tilde{\sigma}(X)$ is defined as $\tilde{g}(\tilde{Q}\xi ,X),$ we
have 
\begin{equation}
\tilde{R}(\tilde{e},\tilde{\varphi}\tilde{e})\xi =-\tilde{\sigma}(\tilde{e})%
\tilde{\varphi}\tilde{e}+\tilde{\sigma}(\tilde{\varphi}\tilde{e})\tilde{e}.
\label{iremmm2}
\end{equation}%
On the other hand, by using (\ref{CURVATURE 4}), we have 
\begin{eqnarray}
\tilde{R}(\tilde{e},\tilde{\varphi}\tilde{e})\xi &=&(\tilde{\nabla}_{\tilde{e%
}}\tilde{\varphi}\tilde{h})\tilde{\varphi}\tilde{e}-(\tilde{\nabla}_{\tilde{%
\varphi}\tilde{e}}\tilde{\varphi}\tilde{h})\tilde{e}  \notag \\
&=&(-2a_{3}\tilde{\lambda}-(\tilde{\varphi}\tilde{e})(\tilde{\lambda}))%
\tilde{e}+(-2b_{3}\tilde{\lambda}-\tilde{e}(\tilde{\lambda}))\tilde{\varphi}%
\tilde{e}.  \label{iremmm3}
\end{eqnarray}%
Comparing (\ref{iremmm3}) with (\ref{iremmm2}), we get%
\begin{equation*}
\tilde{\sigma}(\tilde{e})=\tilde{e}(\tilde{\lambda})+2b_{3}\tilde{\lambda}%
,~\ \tilde{\sigma}(\tilde{\varphi}\tilde{e})=-(\tilde{\varphi}\tilde{e})(%
\tilde{\lambda})-2a_{3}\tilde{\lambda}.\ 
\end{equation*}%
Hence, the functions $a_{3}$ and $b_{3}$ are obtained from the last equation.
\end{proof}

Next, we derive a useful formula for $\tilde{\nabla}_{\xi }\tilde{h}.$

\begin{proposition}
\label{de3}Let $(M,\tilde{\varphi},\xi ,\eta ,\tilde{g})$ be a $3$%
-dimensional paracontact metric manifold with $\tilde{h}$ of $\mathfrak{h}%
_{3}$ type. So, on $U_{1}$ we have%
\begin{equation}
\tilde{\nabla}_{\xi }\tilde{h}=-2\tilde{b}_{3}\tilde{h}\tilde{\varphi}+\xi (%
\tilde{\lambda})s,  \label{iremmm4}
\end{equation}%
where $s$ is the $(1,1)$-type tensor defined by $s\xi =0,$ $s\tilde{e}=%
\tilde{\varphi}\tilde{e},$ \ $s\tilde{\varphi}\tilde{e}=-\tilde{e}.$
\end{proposition}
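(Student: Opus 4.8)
The plan is to verify the claimed identity directly on the local orthonormal $\tilde{\varphi}$-basis $\{\tilde{e},\tilde{\varphi}\tilde{e},\xi\}$ on $\mathcal{U}_1$, exactly in the spirit of the proofs of Proposition \ref{d2} and Proposition \ref{p2}. Since $\tilde{\nabla}_{\xi}\tilde{h}$ and the proposed right-hand side $-2\tilde{b}_{3}\tilde{h}\tilde{\varphi}+\xi(\tilde{\lambda})s$ are both $(1,1)$-tensors, it suffices to show that they agree when applied to each of the three frame vectors. I would write $(\tilde{\nabla}_{\xi}\tilde{h})X=\tilde{\nabla}_{\xi}(\tilde{h}X)-\tilde{h}(\tilde{\nabla}_{\xi}X)$ and expand by the Leibniz rule, so that the terms in which $\tilde{\lambda}$ is differentiated produce precisely the $\xi(\tilde{\lambda})$ contributions.

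The two ingredients I need are the $\mathfrak{h}_{3}$-action $\tilde{h}\tilde{e}=\tilde{\lambda}\tilde{\varphi}\tilde{e}$, $\tilde{h}\tilde{\varphi}\tilde{e}=-\tilde{\lambda}\tilde{e}$, $\tilde{h}\xi=0$ recorded in \eqref{A3}, together with the $\xi$-directional covariant derivatives $vii)$ and $viii)$ of Lemma \ref{lem3}, namely $\tilde{\nabla}_{\xi}\tilde{e}=\tilde{b}_{3}\tilde{\varphi}\tilde{e}$ and $\tilde{\nabla}_{\xi}\tilde{\varphi}\tilde{e}=\tilde{b}_{3}\tilde{e}$. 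I also note $\tilde{\nabla}_{\xi}\xi=0$, which follows at once from \eqref{nablaxi} since $\tilde{\varphi}\xi=0$ and $\tilde{h}\xi=0$. On $\xi$ both sides vanish immediately. On $\tilde{e}$ the computation gives $(\tilde{\nabla}_{\xi}\tilde{h})\tilde{e}=\xi(\tilde{\lambda})\tilde{\varphi}\tilde{e}+2\tilde{\lambda}\tilde{b}_{3}\tilde{e}$, which I match against $-2\tilde{b}_{3}\tilde{h}\tilde{\varphi}\tilde{e}+\xi(\tilde{\lambda})s\tilde{e}=2\tilde{b}_{3}\tilde{\lambda}\tilde{e}+\xi(\tilde{\lambda})\tilde{\varphi}\tilde{e}$, using $\tilde{h}\tilde{\varphi}\tilde{e}=-\tilde{\lambda}\tilde{e}$ and $s\tilde{e}=\tilde{\varphi}\tilde{e}$; the case $\tilde{\varphi}\tilde{e}$ is entirely analogous, relying on $\tilde{\varphi}^{2}\tilde{\varphi}\tilde{e}=\tilde{\varphi}(\tilde{\varphi}^{2}\tilde{e})=\tilde{e}$ on $\ker\eta$ and on $s\tilde{\varphi}\tilde{e}=-\tilde{e}$.

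There is no genuine obstacle here: the statement is a three-line verification on a frame, completely parallel to the $\mathfrak{h}_{1}$ and $\mathfrak{h}_{2}$ cases already treated. The only point demanding care is the sign bookkeeping, and it is exactly this that forces the auxiliary tensor $s$ to be defined here as the \emph{rotation} $s\tilde{e}=\tilde{\varphi}\tilde{e}$, $s\tilde{\varphi}\tilde{e}=-\tilde{e}$, rather than the reflection used in Proposition \ref{d2}: in the $\mathfrak{h}_{3}$ type $\tilde{h}$ interchanges $\tilde{e}$ and $\tilde{\varphi}\tilde{e}$ (up to sign) instead of scaling them, so the $\xi(\tilde{\lambda})$-part of $\tilde{\nabla}_{\xi}\tilde{h}$ inherits this off-diagonal structure. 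Keeping that distinction straight is all that is needed to complete the identification on the basis, and hence on $\mathcal{U}_{1}$.
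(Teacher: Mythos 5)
Your proposal is correct and is essentially the paper's own proof: both verify the identity $(\tilde{\nabla}_{\xi}\tilde{h})X=\bigl(-2\tilde{b}_{3}\tilde{h}\tilde{\varphi}+\xi(\tilde{\lambda})s\bigr)X$ on each frame vector $\xi,\tilde{e},\tilde{\varphi}\tilde{e}$ via the Leibniz rule, the $\mathfrak{h}_{3}$ action \eqref{A3}, and items $vii)$, $viii)$ of Lemma \ref{lem3} (the paper's citation of \eqref{irem0} there is evidently a typo for \eqref{iremmm0}), and your computed values $2\tilde{b}_{3}\tilde{\lambda}\tilde{e}+\xi(\tilde{\lambda})\tilde{\varphi}\tilde{e}$ and $-2\tilde{b}_{3}\tilde{\lambda}\tilde{\varphi}\tilde{e}-\xi(\tilde{\lambda})\tilde{e}$ agree with the paper's. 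The only blemish is the aside ``$\tilde{\varphi}^{2}\tilde{\varphi}\tilde{e}=\tilde{\varphi}(\tilde{\varphi}^{2}\tilde{e})=\tilde{e}$,'' which should read $\tilde{\varphi}(\tilde{\varphi}\tilde{e})=\tilde{\varphi}^{2}\tilde{e}=\tilde{e}$; this is a notational slip, not a gap, since that is the identity your computation actually uses.
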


\begin{proof}
Using (\ref{iremmm0}), we get%
\begin{eqnarray*}
(\tilde{\nabla}_{\xi }\tilde{h})\xi &=&0=(-2\tilde{b}_{3}\tilde{h}\tilde{%
\varphi}+\xi (\tilde{\lambda})s)\xi , \\
(\tilde{\nabla}_{\xi }\tilde{h})\tilde{e} &=&2\tilde{b}_{3}\tilde{\lambda}%
\tilde{e}+\xi (\tilde{\lambda})\tilde{\varphi}\tilde{e}=(-2\tilde{b}_{3}%
\tilde{h}\tilde{\varphi}+\xi (\tilde{\lambda})s)\tilde{e}, \\
(\tilde{\nabla}_{\xi }\tilde{h})\tilde{\varphi}\tilde{e} &=&-2\tilde{b}_{3}%
\tilde{\lambda}\tilde{\varphi}\tilde{e}-\xi (\tilde{\lambda})\tilde{e}=(-2%
\tilde{b}_{3}\tilde{h}\tilde{\varphi}+\xi (\tilde{\lambda})s)\tilde{\varphi}%
\tilde{e}.
\end{eqnarray*}%
The last equations complete the proof of (\ref{iremmm4}).
\end{proof}

\begin{proposition}
\label{pe3}Let $(M,\tilde{\varphi},\xi ,\eta ,\tilde{g})$ be a $3$%
-dimensional paracontact metric manifold with $\tilde{h}$ of $\mathfrak{h}%
_{3}$ type. Then the following equation holds on $M.$%
\begin{equation}
\tilde{h}^{2}-\tilde{\varphi}^{2}=\frac{\tilde{S}(\xi ,\xi )}{2}\tilde{%
\varphi}^{2}.  \label{iremmm5}
\end{equation}
\end{proposition}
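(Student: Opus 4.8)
The plan is to follow exactly the strategy used for the $\mathfrak{h}_1$ and $\mathfrak{h}_2$ cases in Propositions \ref{h-fi} and \ref{p3}: first compute $\tilde{S}(\xi,\xi)$ explicitly from the form of $\tilde{h}$, and then verify the operator identity (\ref{iremmm5}) by evaluating both sides on the local orthonormal $\tilde{\varphi}$-basis $\{\tilde{e},\tilde{\varphi}\tilde{e},\xi\}$ of eigenvectors.

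First I would compute $\tilde{h}^2$ on $\mathcal{U}_1$. By (\ref{A3}) we have $\tilde{h}\tilde{e}=\tilde{\lambda}\tilde{\varphi}\tilde{e}$ and $\tilde{h}\tilde{\varphi}\tilde{e}=-\tilde{\lambda}\tilde{e}$, so applying $\tilde{h}$ twice yields $\tilde{h}^2\tilde{e}=-\tilde{\lambda}^2\tilde{e}$, $\tilde{h}^2\tilde{\varphi}\tilde{e}=-\tilde{\lambda}^2\tilde{\varphi}\tilde{e}$ and $\tilde{h}^2\xi=0$. This is the essential sign difference from the $\mathfrak{h}_1$ case: the skew (off-diagonal) action of $\tilde{h}$ in canonical form (III) produces $-\tilde{\lambda}^2$ upon squaring, rather than the $+\tilde{\lambda}^2$ obtained in the $\mathfrak{h}_1$ situation of Proposition \ref{h-fi}. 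Hence $\mathrm{tr}\,\tilde{h}^2=-2\tilde{\lambda}^2$, and (\ref{RICCI ZETA}) with $n=1$ gives $\tilde{S}(\xi,\xi)=-2-2\tilde{\lambda}^2=-2(1+\tilde{\lambda}^2)$.

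Next I would check (\ref{iremmm5}) componentwise, using the defining relation $\tilde{\varphi}^2=I-\eta\otimes\xi$, so that $\tilde{\varphi}^2\xi=0$, $\tilde{\varphi}^2\tilde{e}=\tilde{e}$ and $\tilde{\varphi}^2\tilde{\varphi}\tilde{e}=\tilde{\varphi}\tilde{e}$. On $\xi$ both sides vanish; on $\tilde{e}$ the left-hand side is $\tilde{h}^2\tilde{e}-\tilde{\varphi}^2\tilde{e}=-\tilde{\lambda}^2\tilde{e}-\tilde{e}=-(1+\tilde{\lambda}^2)\tilde{e}$, which agrees with $\frac{\tilde{S}(\xi,\xi)}{2}\tilde{\varphi}^2\tilde{e}$; the verification on $\tilde{\varphi}\tilde{e}$ is identical. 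This establishes (\ref{iremmm5}) on $\mathcal{U}_1$.

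The only remaining point, which is routine rather than a genuine obstacle, is the passage from $\mathcal{U}_1$ to all of $M$. On $\mathcal{U}_2$ the operator $\tilde{h}$ vanishes, so $\tilde{S}(\xi,\xi)=-2$ by (\ref{RICCI ZETA}) and both sides of (\ref{iremmm5}) reduce to $-\tilde{\varphi}^2$; thus the identity already holds on the open dense set $\mathcal{U}_1\cup\mathcal{U}_2$, and therefore on all of $M$ by continuity. The whole argument is an essentially mechanical verification on the preferred basis; the single place requiring care is keeping track of the sign of $\tilde{\lambda}^2$ produced by the skew action of $\tilde{h}$ in form (III), which is precisely what makes the relation $\tilde{S}(\xi,\xi)=-2(1+\tilde{\lambda}^2)$ differ from its $\mathfrak{h}_1$ counterpart while leaving the final structural identity unchanged.
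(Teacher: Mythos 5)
Your proposal is correct and follows essentially the same route as the paper's proof: compute $\tilde{S}(\xi,\xi)$ from (\ref{RICCI ZETA}) and verify (\ref{iremmm5}) componentwise on the basis $\{\tilde{e},\tilde{\varphi}\tilde{e},\xi\}$, with your added continuity argument on $\mathcal{U}_{1}\cup\mathcal{U}_{2}$ being a routine (and welcome) completion. One substantive remark: your value $\tilde{S}(\xi,\xi)=-2(1+\tilde{\lambda}^{2})$ is the correct one, since $\mathrm{tr}\,\tilde{h}^{2}=-2\tilde{\lambda}^{2}$ for canonical form (III); the paper's proof instead writes $\tilde{S}(\xi,\xi)=2(1+\tilde{\lambda}^{2})$, a sign slip which contradicts (\ref{RICCI ZETA}), Lemma \ref{le3} (where $\bar{a}+\bar{b}=-2(1+\tilde{\lambda}^{2})$), and Case 3 of the proof of Theorem \ref{DESTAR}, and with which the claimed identity (\ref{iremmm5}) would in fact fail, so your version is the one that makes the proposition true.
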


\begin{proof}
Using (\ref{RICCI ZETA}), we have $\tilde{S}(\xi ,\xi )=2(1+\tilde{\lambda}%
^{2}).$ After calculating $\tilde{h}^{2}-\tilde{\varphi}^{2}$ with respect
to the basis components, we get%
\begin{equation}
\tilde{h}^{2}\xi -\tilde{\varphi}^{2}\xi =\frac{\tilde{S}(\xi ,\xi )}{2}%
\tilde{\varphi}^{2}\xi =0,\text{ \ }\tilde{h}^{2}\tilde{e}-\tilde{\varphi}%
^{2}\tilde{e}=\frac{\tilde{S}(\xi ,\xi )}{2}\tilde{\varphi}^{2}\tilde{e},%
\text{ \ }\tilde{h}^{2}\tilde{\varphi}\tilde{e}-\tilde{\varphi}^{3}\tilde{e}=%
\frac{\tilde{S}(\xi ,\xi )}{2}\tilde{\varphi}^{2}\tilde{\varphi}\tilde{e}.
\label{iremmm6}
\end{equation}%
(\ref{iremmm6}) completes the proof of (\ref{iremmm5}).
\end{proof}

\begin{lemma}
\label{le3}Let $(M,\tilde{\varphi},\xi ,\eta ,\tilde{g})$ be a $3$%
-dimensional paracontact metric manifold with $\tilde{h}$ of $\mathfrak{h}%
_{3}$ type. Then the Ricci operator $\tilde{Q}$ is given by%
\begin{equation}
\tilde{Q}=\bar{a}\text{ }I+\bar{b}\eta \otimes \xi -\tilde{\varphi}(\tilde{%
\nabla}_{\xi }\tilde{h})+\tilde{\sigma}(\tilde{\varphi}^{2})\otimes \xi -%
\tilde{\sigma}(\tilde{e})\eta \otimes \tilde{e}+\tilde{\sigma}(\tilde{\varphi%
}\tilde{e})\eta \otimes \tilde{\varphi}\tilde{e},  \label{iremmm7}
\end{equation}%
where $\bar{a}$ and $\bar{b}$ are smooth functions defined by $\bar{a}$ $=1+%
\tilde{\lambda}^{2}+\frac{r}{2}$ and $\bar{b}=-3(\tilde{\lambda}^{2}+1)-%
\frac{r}{2},$ respectively. Moreover the components of the Ricci operator $%
\tilde{Q}$ are given by%
\begin{eqnarray}
\tilde{Q}\xi &=&(\bar{a}+\bar{b})\xi -\tilde{\sigma}(\tilde{e})\tilde{e}+%
\tilde{\sigma}(\tilde{\varphi}\tilde{e})\tilde{\varphi}\tilde{e},  \notag \\
\tilde{Q}\tilde{e} &=&\tilde{\sigma}(\tilde{e})\xi +(\bar{a}+\xi (\tilde{%
\lambda}))\tilde{e}-2\tilde{b}_{3}\tilde{\lambda}\tilde{\varphi}\tilde{e},
\label{iremmm7a} \\
\tilde{Q}\tilde{\varphi}\tilde{e} &=&\tilde{\sigma}(\tilde{\varphi}\tilde{e}%
)\xi +2\tilde{b}_{3}\tilde{\lambda}\tilde{e}+(\bar{a}+\xi (\tilde{\lambda}))%
\tilde{\varphi}\tilde{e}.  \notag
\end{eqnarray}
\end{lemma}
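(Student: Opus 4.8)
The plan is to reproduce, line for line, the scheme already used in Lemmas \ref{Q} and \ref{p4} for the $\mathfrak h_1$ and $\mathfrak h_2$ cases; the only genuinely new ingredients are the value of $\tilde S(\xi,\xi)$ and the formula for $\tilde\nabla_\xi\tilde h$ special to $\mathfrak h_3$ type. First I would specialize the three-dimensional curvature identity \eqref{THREE DIM CURVATURE} by setting $Y=Z=\xi$; using $\tilde g(\xi,\xi)=1$, $\tilde g(X,\xi)=\eta(X)$, $\tilde g(\tilde Q\xi,\xi)=\tilde S(\xi,\xi)$ and $\tilde g(\tilde QX,\xi)=\tilde S(X,\xi)$ this gives $\tilde lX:=\tilde R(X,\xi)\xi=\tilde S(\xi,\xi)X-\tilde S(X,\xi)\xi+\tilde QX-\eta(X)\tilde Q\xi-\frac r2(X-\eta(X)\xi)$, the same identity that opens the other two proofs. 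Independently I would take Zamkovoy's formula \eqref{irem000}, insert $\tilde R(\xi,X)\xi=-\tilde lX$, apply $\tilde\varphi$, and use the anticommutation $\tilde h\tilde\varphi=-\tilde\varphi\tilde h$ together with $\eta\circ\tilde h=0$ (so that $\tilde\varphi\tilde h^2\tilde\varphi=\tilde h^2$ and $\tilde\varphi^2\tilde lX=\tilde lX$) to recast the curvature term as $\tilde lX=-\tilde\varphi^2X+\tilde h^2X-\tilde\varphi(\tilde\nabla_\xi\tilde h)X$. Equating the two expressions for $\tilde lX$ and solving for $\tilde QX$ then reproduces verbatim the master identity \eqref{irem8}/\eqref{QX1}.

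Next I would simplify with the $\mathfrak h_3$-specific data. Proposition \ref{pe3} replaces $-\tilde\varphi^2+\tilde h^2$ by $\frac{\tilde S(\xi,\xi)}2\tilde\varphi^2$; the splitting $\tilde S(X,\xi)=\tilde\sigma(\tilde\varphi^2X)+\eta(X)\tilde S(\xi,\xi)$ and $\frac r2(X-\eta(X)\xi)=\frac r2\tilde\varphi^2X$ put the remaining terms in canonical shape; and expanding $\tilde Q\xi$ in the orthonormal $\tilde\varphi$-basis $\{\tilde e,\tilde\varphi\tilde e,\xi\}$, where $\tilde g(\tilde e,\tilde e)=-1$, $\tilde g(\tilde\varphi\tilde e,\tilde\varphi\tilde e)=\tilde g(\xi,\xi)=1$ and $\tilde\sigma(\cdot)=\tilde g(\tilde Q\xi,\cdot)$, yields $\tilde Q\xi=-\tilde\sigma(\tilde e)\tilde e+\tilde\sigma(\tilde\varphi\tilde e)\tilde\varphi\tilde e+\tilde S(\xi,\xi)\xi$. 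Substituting this back and collecting the coefficients of $X$ and of $\eta(X)\xi$ delivers the operator formula \eqref{iremmm7} with $\bar a=-\frac{\tilde S(\xi,\xi)}2+\frac r2$ and $\bar b=\frac{3\tilde S(\xi,\xi)}2-\frac r2$. It then remains to insert $\tilde S(\xi,\xi)$: from \eqref{RICCI ZETA} with $n=1$ and $\operatorname{tr}\tilde h^2=-2\tilde\lambda^2$ one gets $\tilde S(\xi,\xi)=-2(1+\tilde\lambda^2)$, which turns $\bar a$ and $\bar b$ into the stated $1+\tilde\lambda^2+\frac r2$ and $-3(1+\tilde\lambda^2)-\frac r2$. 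The component formulas \eqref{iremmm7a} then follow by evaluating \eqref{iremmm7} on $\xi$, $\tilde e$, $\tilde\varphi\tilde e$, replacing $\tilde\varphi(\tilde\nabla_\xi\tilde h)$ via Proposition \ref{de3} and reading $\tilde\sigma(\tilde e)$, $\tilde\sigma(\tilde\varphi\tilde e)$ off Lemma \ref{lem3}.

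The step I expect to be delicate is the sign bookkeeping, which is exactly what distinguishes $\mathfrak h_3$ from $\mathfrak h_1$. In canonical form (III) the operator $\tilde h$ is not real-diagonalizable, so $\tilde h^2=-\tilde\lambda^2\tilde\varphi^2$ carries the opposite sign to the $\mathfrak h_1$ case; this flips $\operatorname{tr}\tilde h^2$ to $-2\tilde\lambda^2$ and hence $\tilde S(\xi,\xi)$ to $-2(1+\tilde\lambda^2)$, and it is precisely this sign that converts $-\frac{\tilde S(\xi,\xi)}2+\frac r2$ into $\bar a=1+\tilde\lambda^2+\frac r2$ rather than its negative. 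One must likewise retain the minus sign on the $\tilde e$-term in the basis expansion of $\tilde Q\xi$, coming from $\tilde g(\tilde e,\tilde e)=-1$, and track the signs entering through $\tilde b_3$ and $\xi(\tilde\lambda)$ in \eqref{iremmm4} when computing $\tilde\varphi(\tilde\nabla_\xi\tilde h)$. Once these are handled consistently, every remaining manipulation is mechanical and mirrors the two preceding lemmas.
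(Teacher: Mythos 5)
Your proposal is correct and follows essentially the same route as the paper's own proof: the identity for $\tilde{R}(X,\xi)\xi$ from (\ref{THREE DIM CURVATURE}), Zamkovoy's formula (\ref{irem000}) composed with $\tilde{\varphi}$ (using $\tilde{\varphi}\tilde{h}^{2}\tilde{\varphi}=\tilde{h}^{2}$ and $\tilde{\varphi}^{2}\tilde{R}(\xi,X)\xi=\tilde{R}(\xi,X)\xi$), Proposition \ref{pe3}, the splitting $\tilde{S}(X,\xi)=\tilde{\sigma}(\tilde{\varphi}^{2}X)+\eta(X)\tilde{S}(\xi,\xi)$, the basis expansion of $\tilde{Q}\xi$, and the final collection of coefficients of $X$ and $\eta(X)\xi$. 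Two sign remarks confirm rather than undermine your computation: your value $\tilde{S}(\xi,\xi)=-2(1+\tilde{\lambda}^{2})$ is the one actually required to produce $\bar{a}=1+\tilde{\lambda}^{2}+\frac{r}{2}$ and $\bar{b}=-3(\tilde{\lambda}^{2}+1)-\frac{r}{2}$ (the value $2(1+\tilde{\lambda}^{2})$ printed in the proof of Proposition \ref{pe3} is a typo, as the paper's own use of $\tilde{Q}\xi=-2(1+\tilde{\lambda}^{2})\xi$ elsewhere shows), and if you carry out the concluding evaluation via Proposition \ref{de3} you will find $\tilde{Q}\tilde{e}=\tilde{\sigma}(\tilde{e})\xi+(\bar{a}-\xi(\tilde{\lambda}))\tilde{e}-2\tilde{b}_{3}\tilde{\lambda}\tilde{\varphi}\tilde{e}$, i.e.\ the sign of $\xi(\tilde{\lambda})$ in the $\tilde{Q}\tilde{e}$ line of (\ref{iremmm7a}) as printed is inconsistent with (\ref{iremmm7}), whereas the $\tilde{Q}\tilde{\varphi}\tilde{e}$ line matches.
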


\begin{proof}
By (\ref{THREE DIM CURVATURE}), we have%
\begin{equation*}
\tilde{R}(X,\xi )\xi =\tilde{S}(\xi ,\xi )X-\tilde{S}(X,\xi )\xi +\tilde{Q}%
X-\eta (X)\tilde{Q}\xi -\frac{r}{2}(X-\eta (X)\xi ),
\end{equation*}%
for any vector field $X.$ Using (\ref{irem000}), the last equation implies%
\begin{equation}
\tilde{Q}X=-\tilde{\varphi}^{2}X+\tilde{h}^{2}X-\tilde{\varphi}(\tilde{\nabla%
}_{\xi }\tilde{h})X-\tilde{S}(\xi ,\xi )X+\tilde{S}(X,\xi )\xi +\eta (X)%
\tilde{Q}\xi +\frac{r}{2}(X-\eta (X)\xi ).  \label{QX}
\end{equation}%
By writing $\tilde{S}(X,\xi )=\tilde{S}(\tilde{\varphi}^{2}X,\xi )+\eta (X)%
\tilde{S}(\xi ,\xi )$ in (\ref{QX}), we obtain 
\begin{equation}
\tilde{Q}X=\frac{\tilde{S}(\xi ,\xi )}{2}\tilde{\varphi}^{2}X-\tilde{\varphi}%
(\tilde{\nabla}_{\xi }\tilde{h})X-\tilde{S}(\xi ,\xi )X+\tilde{S}(\tilde{%
\varphi}^{2}X,\xi )\xi +\eta (X)\tilde{S}(\xi ,\xi )\xi +\eta (X)\tilde{Q}%
\xi +\frac{r}{2}\tilde{\varphi}^{2}X.  \label{iremmm8}
\end{equation}%
We know that the Ricci tensor $\tilde{S}$ with respect to the orthonormal
basis $\{\tilde{e},\tilde{\varphi}\tilde{e},\xi \}$ is given by 
\begin{equation}
\tilde{Q}\xi =-\tilde{\sigma}(\tilde{e})\tilde{e}+\tilde{\sigma}(\tilde{%
\varphi}\tilde{e})\tilde{\varphi}\tilde{e}+\tilde{S}(\xi ,\xi )\xi .
\label{iremmm9}
\end{equation}%
Using (\ref{iremmm9}) in (\ref{iremmm8}), we have%
\begin{eqnarray}
\tilde{Q}X &=&\left( 1+\tilde{\lambda}^{2}+\frac{r}{2}\right) X+\left( -3(%
\tilde{\lambda}^{2}+1)-\frac{r}{2}\right) \eta (X)\xi  \label{iremmm10} \\
&&-\tilde{\varphi}(\tilde{\nabla}_{\xi }\tilde{h})X+\tilde{\sigma}(\tilde{%
\varphi}^{2}X)\xi -\eta (X)\tilde{\sigma}(\tilde{e})\tilde{e}+\eta (X)\tilde{%
\sigma}(\tilde{\varphi}\tilde{e})\tilde{\varphi}\tilde{e},  \notag
\end{eqnarray}%
for arbitrary vector field $X$. This ends the proof.
\end{proof}

\textbf{The tensor }$\tilde{h}$\textbf{\ has the canonical form (IV).} Let $%
(M,\tilde{\varphi},\xi ,\eta ,\tilde{g})$ be a $3$-dimensional paracontact
metric manifold and $p$ is a point of $M$. Then there exists a local
pseudo-orthonormal basis $\{e_{1},e_{2},e_{3}\}$ in a neighborhood of $p$
where $\tilde{g}(e_{1},e_{1})=\tilde{g}(e_{2},e_{2})=\tilde{g}(e_{1},e_{3})=%
\tilde{g}(e_{2},e_{3})=0$ and $\tilde{g}(e_{1},e_{2})=\tilde{g}%
(e_{3},e_{3})=1.$ Since the tensor $\tilde{h}$ has canonical form (IV) (with
respect to a pseudo-orthonormal basis $\{e_{1},e_{2},e_{3}\})$ then $\tilde{h%
}e_{1}=\tilde{\lambda}e_{1}+e_{3},$ \ $\tilde{h}e_{2}=\tilde{\lambda}e_{2}$
and $\tilde{h}e_{3}=e_{2}+\tilde{\lambda}e_{3}.$ Since $0=tr\tilde{h}=\tilde{%
g}(\tilde{h}e_{1},e_{2})+\tilde{g}(\tilde{h}e_{2},e_{1})+\tilde{g}(\tilde{h}%
e_{3},e_{3})=3\tilde{\lambda}$, then $\tilde{\lambda}=0.$ We write $\xi =%
\tilde{g}(\xi ,e_{2})e_{1}+\tilde{g}(\xi ,e_{1})e_{2}+\tilde{g}(\xi
,e_{3})e_{3}$ respect to the pseudo-orthonormal basis $\{e_{1},e_{2},e_{3}\}$%
. Since $\tilde{h}\xi =0,$ we have $0=\tilde{g}(\xi ,e_{2})e_{3}+\tilde{g}%
(\xi ,e_{3})e_{2}.$ Hence we get $\xi =\tilde{g}(\xi ,e_{1})e_{2}$ which
leads to a contradiction with $\tilde{g}(\xi ,\xi )=1$. Thus, this case does
not occur.

\textbf{The proof of Theorem \ref{DESTAR}:}

According to a suitable (pseudo)-orthonormal basis, $\tilde{h}$ can be put
into one of the three forms. So we will need three cases.

\textit{Case 1:} \ Firstly, we suppose that $\tilde{h}$ is $\mathfrak{h}_{1}$
type.

From (\ref{NAMBLASTAR}) we have%
\begin{eqnarray*}
\tilde{\nabla}^{\ast }\tilde{\nabla}\xi &=&-\tilde{\nabla}_{\tilde{e}}\tilde{%
\nabla}_{\tilde{e}}\xi +\tilde{\nabla}_{\tilde{\nabla}_{\tilde{e}}\tilde{e}%
}\xi +\tilde{\nabla}_{\tilde{\varphi}\tilde{e}}\tilde{\nabla}_{\tilde{\varphi%
}\tilde{e}}\xi -\tilde{\nabla}_{\tilde{\nabla}_{\tilde{\varphi}\tilde{e}}%
\tilde{\varphi}\tilde{e}}\xi \\
&=&-\tilde{\sigma}(\tilde{e})\tilde{e}+\tilde{\sigma}(\tilde{\varphi}\tilde{e%
})\tilde{\varphi}\tilde{e}+2(\tilde{\lambda}^{2}+1)\xi .
\end{eqnarray*}

By using (\ref{NAMBLASTAR4}) and (\ref{irem7a}) we obtain $\tilde{\sigma}(%
\tilde{e})=\tilde{\sigma}(\tilde{\varphi}\tilde{e})=0$ and $\tilde{Q}\xi =2(%
\tilde{\lambda}^{2}-1)\xi .$

\textit{Case 2:} \ Secondly, we assume that $\tilde{h}$ is $\mathfrak{h}_{2}$
type.

We construct an orthonormal basis $\{\tilde{e},\tilde{\varphi}\tilde{e},\xi
\}$ from the pseudo-orthonormal basis $\{e_{1},e_{2},\xi \}$ such that 
\begin{equation}
\tilde{e}=\frac{e_{1}-e_{2}}{\sqrt{2}},\text{ \ \ }\tilde{\varphi}\tilde{e}=%
\frac{e_{1}+e_{2}}{\sqrt{2}}\text{, \ }\tilde{g}(\tilde{e},\tilde{e})=-1%
\text{ and }\tilde{g}(\tilde{\varphi}\tilde{e},\tilde{\varphi}\tilde{e})=1%
\text{.}  \label{cm1}
\end{equation}%
\ Then $\tilde{h}$ with respect to this new basis takes the form,%
\begin{equation}
\tilde{h}\tilde{e}=\tilde{h}\tilde{\varphi}\tilde{e}=\frac{1}{2}(-\tilde{e}+%
\tilde{\varphi}\tilde{e}).  \label{cm2}
\end{equation}%
By Lemma \ref{de2} we have%
\begin{eqnarray}
\tilde{\nabla}_{\tilde{e}}\tilde{e} &=&\frac{1}{\sqrt{2}}(-b_{2}-\frac{1}{2}%
\tilde{\sigma}(e_{1}))\tilde{\varphi}\tilde{e}+\frac{1}{2}\xi ,\text{ }%
\tilde{\nabla}_{\tilde{e}}\tilde{\varphi}\tilde{e}=-\frac{1}{\sqrt{2}}(b_{2}+%
\frac{1}{2}\tilde{\sigma}(e_{1}))\tilde{e}+\frac{3}{2}\xi ,  \label{irem3} \\
\text{ }\tilde{\nabla}_{\tilde{e}}\xi &=&\frac{\tilde{e}-3\tilde{\varphi}%
\tilde{e}}{2}\text{, \ \ }\tilde{\nabla}_{\tilde{\varphi}\tilde{e}}\tilde{e}%
=-\frac{1}{\sqrt{2}}(b_{2}-\frac{1}{2}\tilde{\sigma}(e_{1}))\tilde{\varphi}%
\tilde{e}-\frac{1}{2}\xi ,  \notag \\
\text{ \ \ }\tilde{\nabla}_{\tilde{\varphi}\tilde{e}}\tilde{\varphi}\tilde{e}
&=&-\frac{1}{\sqrt{2}}(b_{2}-\frac{1}{2}\tilde{\sigma}(e_{1}))\tilde{e}+%
\frac{1}{2}\xi ,\text{ }\tilde{\nabla}_{\tilde{\varphi}\tilde{e}}\xi =\frac{-%
\tilde{e}-\tilde{\varphi}\tilde{e}}{2}\text{\ \ .}  \notag
\end{eqnarray}%
Using (\ref{NAMBLASTAR}) and (\ref{irem3}) we get%
\begin{eqnarray}
\tilde{\nabla}^{\ast }\tilde{\nabla}\xi &=&-\tilde{\nabla}_{\tilde{e}}\tilde{%
\nabla}_{\tilde{e}}\xi +\tilde{\nabla}_{\tilde{\nabla}_{\tilde{e}}\tilde{e}%
}\xi +\tilde{\nabla}_{\tilde{\varphi}\tilde{e}}\tilde{\nabla}_{\tilde{\varphi%
}\tilde{e}}\xi -\tilde{\nabla}_{\tilde{\nabla}_{\tilde{\varphi}\tilde{e}%
\tilde{\varphi}}\tilde{e}}\xi  \label{irem laplace} \\
&=&\frac{\tilde{\sigma}(e_{1})}{\sqrt{2}}(\tilde{e}-\tilde{\varphi}\tilde{e}%
)+2\xi .  \notag
\end{eqnarray}%
From (\ref{NAMBLASTAR4}) and (\ref{irem laplace}), we obtain $\tilde{\sigma}%
(e_{1})=0.$ By help of (\ref{iremm7a}), $\xi $ is an eigenvector of the
Ricci operator.

\textit{Case 3:} Finally, let $\tilde{h}$ \ be $\mathfrak{h}_{3}$ type.

Again using (\ref{NAMBLASTAR}) we obtain%
\begin{eqnarray*}
\tilde{\nabla}^{\ast }\tilde{\nabla}\xi &=&-\tilde{\nabla}_{\tilde{e}}\tilde{%
\nabla}_{\tilde{e}}\xi +\tilde{\nabla}_{\tilde{\nabla}_{\tilde{e}}\tilde{e}%
}\xi +\tilde{\nabla}_{\tilde{\varphi}\tilde{e}}\tilde{\nabla}_{\tilde{\varphi%
}\tilde{e}}\xi -\tilde{\nabla}_{\tilde{\nabla}_{\tilde{\varphi}\tilde{e}}%
\tilde{\varphi}\tilde{e}}\xi \\
&=&-\tilde{\sigma}(\tilde{e})\tilde{e}+\tilde{\sigma}(\tilde{\varphi}\tilde{e%
})\tilde{\varphi}\tilde{e}+2(1-\tilde{\lambda}^{2})\xi .
\end{eqnarray*}%
From (\ref{NAMBLASTAR4}), we obtain $\tilde{\sigma}(\tilde{e})=\tilde{\sigma}%
(\tilde{\varphi}\tilde{e})=0.$ By (\ref{iremmm7a}) we have $\tilde{Q}\xi
=-2(1+\tilde{\lambda}^{2})\xi .$

This completes the proof.

\textbf{The proof of Theorem \ref{k mu vu}:}

We give proof of the theorem for three cases respect to chosen
(pseudo)-orthonormal basis.

\textit{Case 1: }We assume that $\tilde{h}$ is $\mathfrak{h}_{1}$ type.

Since $\xi $ is a harmonic vector field, $\xi $ is an eigenvector of $\tilde{%
Q}.$ Hence we obtain that $\tilde{\sigma}=0.$ Putting $s=\frac{1}{\tilde{%
\lambda}}\tilde{h}$ in (\ref{irem7}) we have%
\begin{equation}
\tilde{Q}=a_{1}I+b_{1}\eta \otimes \xi -2b\tilde{h}-\frac{\xi (\tilde{\lambda%
})}{\tilde{\lambda}}\tilde{\varphi}\tilde{h}.  \label{irem11}
\end{equation}%
Setting $Z=\xi $ in (\ref{THREE DIM CURVATURE}) and using (\ref{irem11}), we
obtain%
\begin{equation*}
\tilde{R}(X,Y)\xi =(\tilde{\lambda}^{2}-1)(\eta (Y)X-\eta (X)Y)-2b(\eta (Y)%
\tilde{h}X-\eta (X)\tilde{h}Y)-\frac{\xi (\tilde{\lambda})}{\tilde{\lambda}}%
(\eta (Y)\tilde{\varphi}\tilde{h}X-\eta (X)\tilde{\varphi}\tilde{h}Y),
\end{equation*}%
where the functions $\tilde{\kappa},$ $\tilde{\mu}$ and $\tilde{\nu}$
defined by $\tilde{\kappa}=\frac{\tilde{S}(\xi ,\xi )}{2},$ $\tilde{\mu}%
=-2b, $ $\tilde{\nu}=-\frac{\xi (\tilde{\lambda})}{\tilde{\lambda}},$
respectively. So, it is obvious that for this type $\tilde{\kappa}>-1$.
Moreover, using (\ref{irem11}), we have $\tilde{Q}\tilde{\varphi}-\tilde{%
\varphi}\tilde{Q}=2\tilde{\mu}\tilde{h}\tilde{\varphi}-2\tilde{\nu}\tilde{h}%
. $

\textit{Case 2: }Secondly, let $\tilde{h}$ be $\mathfrak{h}_{2}$ type.

Putting $\tilde{\sigma}=0$ in (\ref{iremm7}) we get%
\begin{equation}
\tilde{Q}=\ddot{a}I+\ddot{b}\eta \otimes \xi -2a_{2}\tilde{h},
\label{iremm11}
\end{equation}%
which yields%
\begin{equation}
\tilde{Q}\xi =\tilde{S}(\xi ,\xi )\xi ,  \label{iremm12}
\end{equation}%
for any vector fields on $M.$ Putting $\xi $ instead of $Z$ in (\ref{THREE
DIM CURVATURE}) we obtain%
\begin{eqnarray}
\tilde{R}(X,Y)\xi &=&-\tilde{S}(X,\xi )+\tilde{S}(Y,\xi )-\eta (X)\tilde{Q}Y
\label{iremm13} \\
&&+\eta (Y)\tilde{Q}X+\frac{r}{2}(\eta (X)Y-\eta (Y)X),  \notag
\end{eqnarray}%
for any vector field $X.$ Using (\ref{iremm11}) and (\ref{iremm12}) in (\ref%
{iremm13}), we obtain%
\begin{equation*}
\tilde{R}(X,Y)\xi =-(\eta (Y)X-\eta (X)Y)-2a_{2}(\eta (Y)\tilde{h}X-\eta (X)%
\tilde{h}Y),
\end{equation*}%
where the functions $\tilde{\kappa}$ and $\tilde{\mu}$ defined by $\tilde{%
\kappa}=\frac{\tilde{S}(\xi ,\xi )}{2},$ $\tilde{\mu}=-2a_{2},$
respectively. So, it is obvious that for this type $\tilde{\kappa}=-1$.
Furthermore, by (\ref{iremm11}), we have $\tilde{Q}\tilde{\varphi}-\tilde{%
\varphi}\tilde{Q}=2\tilde{\mu}\tilde{h}\tilde{\varphi}.$

\textit{Case 3:}\ Finally, we suppose that $\tilde{h}$ is $\mathfrak{h}_{3}$
type.

Since $M$ is a $H$-paracontact metric manifold we have $\tilde{\sigma}=0$.
Putting $s=\frac{1}{\tilde{\lambda}}\tilde{h}$ in (\ref{iremmm7}) we get%
\begin{equation}
\tilde{Q}=\bar{a}I+\bar{b}\eta \otimes \xi -2\tilde{b}_{3}\tilde{h}-\left( 
\frac{\xi (\tilde{\lambda})}{\tilde{\lambda}}\right) \tilde{\varphi}\tilde{h}%
,  \label{iremmm11}
\end{equation}%
which yields%
\begin{equation}
\tilde{Q}\xi =\tilde{S}(\xi ,\xi )\xi ,  \label{iremmm12}
\end{equation}%
for any vector fields on $M.$ Setting $\xi =$ $Z$ in (\ref{THREE DIM
CURVATURE}) we find%
\begin{eqnarray}
\tilde{R}(X,Y)\xi &=&-\tilde{S}(X,\xi )+\tilde{S}(Y,\xi )-\eta (X)\tilde{Q}Y
\label{iremmm13} \\
&&+\eta (Y)\tilde{Q}X+\frac{r}{2}(\eta (X)Y-\eta (Y)X),  \notag
\end{eqnarray}%
for any vector field $X.$ Using (\ref{iremmm11}) and (\ref{iremmm12}) in (%
\ref{iremmm13}), we get%
\begin{equation*}
\tilde{R}(X,Y)\xi =(-1-\tilde{\lambda}^{2})(\eta (Y)X-\eta (X)Y)-2\tilde{b}%
_{3}(\eta (Y)\tilde{h}X-\eta (X)\tilde{h}Y)-\frac{\xi (\tilde{\lambda})}{%
\tilde{\lambda}}(\eta (Y)\tilde{\varphi}\tilde{h}X-\eta (X)\tilde{\varphi}%
\tilde{h}Y),
\end{equation*}%
where the functions $\tilde{\kappa},$ $\tilde{\mu}$ and $\tilde{\nu}$
defined by $\tilde{\kappa}=\frac{\tilde{S}(\xi ,\xi )}{2},$ $\tilde{\mu}=-2%
\tilde{b}_{3},$ $\tilde{\nu}=-\frac{\xi (\tilde{\lambda})}{\tilde{\lambda}}$%
, respectively. So, it is obvious that for this type $\tilde{\kappa}<-1.$ By
help of (\ref{iremmm11}), we get $\tilde{Q}\tilde{\varphi}-\tilde{\varphi}%
\tilde{Q}=-2\tilde{\mu}\tilde{h}\tilde{\varphi}-2\tilde{\nu}\tilde{h}.$

Conversely, let $M$ is a paracontact metric $($ $\tilde{\kappa},\tilde{\mu},%
\tilde{\nu})$-manifold. Using Teorem \ref{DESTAR} and (\ref{Riczeta}), we
conclude that $\xi $ is harmonic vector field.

This completes the proof.

\textbf{Concluding Lemma:} \textit{\ Let }$(M,\tilde{\varphi},\xi ,\eta ,%
\tilde{g})$\textit{\ be a }$3$\textit{-dimensional paracontact metric
manifold. Then a canonical form of }$\tilde{h}$\textit{\ stays constant in
an open neighborhood of any point for }$\tilde{h}$\textit{. }

\textbf{Proof:} Let $U_{1}$ be open subset of $M$ where $\tilde{h}\neq 0$
and $p,q\in U_{1}$ with $p\neq q.$

\textit{Case 1}. Now we assume that $\tilde{h}_{p}$ has canonical form (I)
at $T_{p}M_{1}^{3}.$ In this case, there exists an orthonormal $\tilde{%
\varphi}$-basis $\{\tilde{e},\tilde{\varphi}\tilde{e},\xi \}$ such that 
\begin{equation}
\tilde{h}_{p}\tilde{e}=\tilde{\lambda}(p)\tilde{e}\text{, \ }\tilde{h}_{p}(%
\tilde{\varphi}\tilde{e})=-\text{\ }\tilde{\lambda}(p)\tilde{\varphi}\tilde{e%
}\text{, \ \ }\tilde{h}_{p}\xi =0,  \label{h1}
\end{equation}%
with $-\tilde{g}(\tilde{e},\tilde{e})=\tilde{g}(\tilde{\varphi}\tilde{e},%
\tilde{\varphi}\tilde{e})=\tilde{g}(\xi ,\xi )=1.$

In $T_{q}M_{1}^{3}$ we suppose that $\tilde{h}_{q}$ has canonical form (II).
By Lemma \ref{II} we can construct a pseudo-orthonormal basis $%
\{e_{1},e_{2},e_{3}\}$ in a neighborhood of $q$ such that $\tilde{h}%
_{q}e_{1}=e_{2},\tilde{h}_{q}e_{2}=0,\tilde{h}_{q}e_{3}=0$ and $\tilde{%
\varphi}e_{1}=e_{1},$ $\tilde{\varphi}e_{2}=-e_{2},\tilde{\varphi}e_{3}=0$
and also $\xi =e_{3}$ with $\tilde{g}(e_{1},e_{1})=\tilde{g}(e_{2},e_{2})=%
\tilde{g}(e_{1},e_{3})=\tilde{g}(e_{2},e_{3})=0$ and $\tilde{g}(e_{1},e_{2})=%
\tilde{g}(e_{3},e_{3})=1$. Putting 
\begin{equation}
\tilde{E}=\frac{e_{1}-e_{2}}{\sqrt{2}},\text{ \ \ \ }\tilde{\varphi}\tilde{E}%
=\frac{e_{1}+e_{2}}{\sqrt{2}}  \label{hxy}
\end{equation}%
we get an orthonormal basis such that $-\tilde{g}(\tilde{E},\tilde{E})=%
\tilde{g}(\tilde{\varphi}\tilde{E},\tilde{\varphi}\tilde{E})=\tilde{g}(\xi
,\xi )=1$ and also%
\begin{equation}
\tilde{h}_{q}\tilde{E}=\tilde{h}_{q}\tilde{\varphi}\tilde{E}=\frac{1}{2}(-%
\tilde{E}+\tilde{\varphi}\tilde{E}).  \label{hx}
\end{equation}%
So tangent spaces $T_{p}M$ $=span\{\tilde{e},\tilde{\varphi}\tilde{e},\xi \}$
and $T_{q}M=span\{\tilde{E},\tilde{\varphi}\tilde{E},\xi )$ have same
dimension and index. Thus there exist a linear isometry $F_{\ast }$ from $%
T_{p}M$ to $T_{q}M$ such that $F(p)=q$ and 
\begin{equation}
F_{\ast }(\tilde{e})=\tilde{E},\text{ \ \ }F_{\ast }(\tilde{\varphi}\tilde{e}%
)=\tilde{\varphi}\tilde{E},\text{ \ \ }F_{\ast }(\xi )=\xi .  \label{h2}
\end{equation}%
By (\ref{h1}) and (\ref{h2}) we have%
\begin{equation}
F_{\ast }(\tilde{h}_{p}\tilde{e})=\tilde{\lambda}(q)\tilde{E},\text{ }%
F_{\ast }(\tilde{h}_{p}\tilde{\varphi}\tilde{e})\text{\ }=-\tilde{\lambda}(q)%
\tilde{\varphi}\tilde{E}.  \label{h3}
\end{equation}%
Using (\ref{hx}) and (\ref{h3}) we obtain 
\begin{equation*}
0=tr\tilde{h}_{q}^{2}=2\tilde{\lambda}^{2}(q),
\end{equation*}%
which implies that $\tilde{\lambda}(q)=0$. Hence we contradict the fact that 
$q\in U_{1}$.

\textit{Case 2}. Again we assume that $\tilde{h}_{p}$ has canonical form (I)
at $T_{p}M_{1}^{3}.$ Suppose to contrary that $\tilde{h}_{q}$ has canonical
form (III) in $T_{q}M.$ One can construct a local orthonormal $\tilde{\varphi%
}$-basis $\{\tilde{f}_{1},\tilde{\varphi}\tilde{f}_{1},\xi \}$ in a
neighborhood of $q$ such that $-\tilde{g}(\tilde{f}_{1},\tilde{f}_{1})=%
\tilde{g}(\tilde{\varphi}\tilde{f}_{1},\tilde{\varphi}\tilde{f}_{1})=\tilde{g%
}(\xi ,\xi )=1,\tilde{h}_{q}(\tilde{f}_{1})=\tilde{\lambda}_{1}(q)\tilde{%
\varphi}\tilde{f}_{1},\tilde{h}_{q}(\tilde{\varphi}\tilde{f}_{1})=-\tilde{%
\lambda}_{1}(q)\tilde{f}_{1}$. Since tangent spaces $T_{p}M_{1}^{3}$ $=span\{%
\tilde{e},\tilde{\varphi}\tilde{e},\xi \}$ and $T_{q}M_{1}^{3}=span\{\tilde{f%
}_{1},\tilde{\varphi}\tilde{f}_{1},\xi )$ have same dimension and index, we
can construct a linear isometry $T_{\ast }$ from $T_{p}M_{1}^{3}$ to $%
T_{q}M_{1}^{3}$ such that $T(p)=q$ and 
\begin{equation}
T_{\ast }(\tilde{e})=\tilde{f}_{1},\text{ \ \ }T_{\ast }(\tilde{\varphi}%
\tilde{e})=\tilde{\varphi}\tilde{f}_{1},\text{ \ \ }T_{\ast }(\xi )=\xi .
\label{h4}
\end{equation}%
So we get 
\begin{equation}
T_{\ast }(\tilde{h}_{p}\tilde{e})=\tilde{\lambda}(q)\tilde{f}_{1},T_{\ast }(%
\tilde{h}_{p}\tilde{\varphi}\tilde{e})\ =-\tilde{\lambda}(q)\tilde{\varphi}%
\tilde{f}_{1}.  \label{h5}
\end{equation}%
From (\ref{h4}) and (\ref{h5}) we find%
\begin{equation*}
-2\tilde{\lambda}_{1}^{2}(q)=tr\tilde{h}_{q}^{2}=2\tilde{\lambda}^{2}(q)
\end{equation*}%
and this last equation gives $\tilde{\lambda}_{1}(q)=\tilde{\lambda}(q)=0$
which contradicts with $q\in U_{1}.$

\textit{Case 3}\textbf{.} Let us consider that $\tilde{h}_{p}$ has canonical
form (III) at $T_{p}M_{1}^{3}.$ In this case, there exist an orthonormal $%
\tilde{\varphi}$-basis $\{\tilde{e},\tilde{\varphi}\tilde{e},\xi \}$ such
that 
\begin{equation*}
\tilde{h}_{p}\tilde{e}=\tilde{\lambda}(p)\tilde{\varphi}\tilde{e}\text{, \ }%
\tilde{h}_{p}(\tilde{\varphi}\tilde{e})=-\tilde{\lambda}(p)\tilde{e}\text{,
\ \ }\tilde{h}_{p}\xi =0
\end{equation*}%
with $-\tilde{g}(\tilde{e},\tilde{e})=\tilde{g}(\tilde{\varphi}\tilde{e},%
\tilde{\varphi}\tilde{e})=\tilde{g}(\xi ,\xi )=1$. In $T_{q}M_{1}^{3}$ we
suppose that $\tilde{h}_{q}$ has canonical form (II). Similar arguments as
in Case 1, we obtain 
\begin{equation*}
0=tr\tilde{h}_{q}^{2}=2\tilde{\lambda}^{2}(q)
\end{equation*}%
which leads to a contradiction of chosen $q$. This completes proof of the
concluding lemma.

Now we will give some examples of $3$-dimensional $(\tilde{\kappa},$ $\tilde{%
\mu},\tilde{\nu})$-paracontact metric manifolds according to the cases $%
\tilde{\kappa}>-1$, $\tilde{\kappa}=-1$ and $\tilde{\kappa}<-1.$

\begin{example}
\label{E1}We consider the $3$-dimensional manifold 
\begin{equation*}
M=\{(x,y,z)\in R^{3}\mid 2y+z\neq 0,\text{ }z\neq 0\}
\end{equation*}%
and the vector fields%
\begin{equation*}
e_{1}=\text{ }\frac{\partial }{\partial x},\text{ \ \ }e_{2}=\frac{\partial 
}{\partial y},\text{ \ \ }e_{3}=(2y+z)\frac{\partial }{\partial x}-(2zx-%
\frac{1}{2z}y)\frac{\partial }{\partial y}+\frac{\partial }{\partial z}.
\end{equation*}%
The $1$-form $\eta =dx-(2y+z)dz$ \ defines a contact structure on $M$ with
characteristic vector field $\xi =\frac{\partial }{\partial x}$. We define
the structure tensor $\tilde{\varphi}\tilde{e}_{1}=0,$ $\tilde{\varphi}%
\tilde{e}_{2}=\tilde{e}_{3}$ and $\tilde{\varphi}\tilde{e}_{3}=\tilde{e}%
_{2}. $ Let $\tilde{g}$ be Lorentzian metric defined by $\tilde{g}%
(e_{1},e_{1})=-\tilde{g}(e_{2},e_{2})=\tilde{g}(e_{3},e_{3})=1$ and $\tilde{g%
}(e_{1},e_{2})=\tilde{g}(e_{1},e_{3})=\tilde{g}(e_{2},e_{3})=0.$ Then $(%
\tilde{\varphi},\xi ,\eta ,\tilde{g})$ is a paracontact metric structure on $%
M$. Using \ Lemma \ref{K>-1}, we conclude that $M$ is a generalized $(\tilde{%
\kappa}$,$\tilde{\mu}$) paracontact metric manifold with $\tilde{\kappa}%
=-1+z^{2},\tilde{\mu}=2(1-z).$
\end{example}

\begin{example}
\label{ex2}Consider the $3$-dimensional manifold 
\begin{equation*}
M=\left\{ (x,y,z)\in 
\mathbb{R}
^{3}\mid 2y-z\neq 0,\right\} ,
\end{equation*}%
where $(x,y,z)$ are the cartesian coordinates in $%
\mathbb{R}
^{3}$. We define three vector fields on $M$ as 
\begin{equation*}
e_{1}=(-2y+z)\frac{\partial }{\partial x}+(x-2y-z)\frac{\partial }{\partial y%
}+\frac{\partial }{\partial z},\text{ \ \ }e_{2}=\frac{\partial }{\partial y}%
,\text{ \ \ }e_{3}=\xi =\frac{\partial }{\partial x}.
\end{equation*}
The pseudo-Riemannian metric $\tilde{g}$, and the $(1,1)$-tensor field $%
\tilde{\varphi}$ given by 
\begin{equation*}
\tilde{g}=\left( 
\begin{array}{ccc}
1 & 0 & (2y-z)/2 \\ 
0 & 0 & \frac{1}{2} \\ 
(2y-z)/2 & \frac{1}{2} & (-2y+z)^{2}-2(x-2y-z)%
\end{array}%
\right) ,\text{ \ \ }\tilde{\varphi}\text{\ }=\left( 
\begin{array}{ccc}
0 & 0 & -2y+z \\ 
0 & -1 & x-2y-z \\ 
0 & 0 & 1%
\end{array}%
\right) .
\end{equation*}%
So we easily obtain $\tilde{g}(e_{1},e_{1})=\tilde{g}(e_{2},e_{2})=\tilde{g}%
(e_{1},e_{3})=\tilde{g}(e_{2},e_{3})=0$ and $\tilde{g}(e_{1},e_{2})=\tilde{g}%
(e_{3},e_{3})=1$. Moreover we have $\eta =dx+(2y-z)dz$\ and\ $\tilde{h}$\ $%
=\left( 
\begin{array}{ccc}
0 & 0 & 0 \\ 
0 & 0 & 1 \\ 
0 & 0 & 0%
\end{array}%
\right) $ with respect to the basis $\frac{\partial }{\partial x},\frac{%
\partial }{\partial y},\frac{\partial }{\partial z}$. By direct calculations
we get%
\begin{equation*}
\tilde{R}(X,Y)\xi =-(\eta (Y)X-\eta (X)Y)+2(\eta (Y)\tilde{h}X-\eta (X)%
\tilde{h}Y).
\end{equation*}%
Finally we deduce that $M$ is a $(-1,2,0)$-paracontact metric manifold.
\end{example}

\begin{remark}
\label{rem2}To our knowledge, the above example is the first numerical
example \ satisfying $\tilde{\kappa}=-1$ and $\tilde{h}\neq 0$ in $%
\mathbb{R}
^{3}$.
\end{remark}

\begin{example}
\label{ex3}In \cite{KMP} Koufogiorgos et al. construct following example.

Consider $3$-dimensional manifold 
\begin{equation*}
M=\left\{ (x,y,z)\in 
\mathbb{R}
^{3}\mid 2x+e^{y+z}>0,~\ y\neq z\right\}
\end{equation*}

and the vector fields $e_{1}=\frac{\partial }{\partial x},$%
\begin{eqnarray*}
e_{2} &=&\left( -\left( \frac{y^{2}+z^{2}}{2}\right) (2x+e^{y+z})^{\frac{1}{2%
}}\right) \frac{\partial }{\partial x}+\left( \frac{z(2x+e^{y+z})^{\frac{1}{2%
}}}{y-z}+\frac{(2x+e^{y+z})^{-\frac{1}{2}}}{y-z}\right) \frac{\partial }{%
\partial y} \\
&&\ +\left( \frac{y(2x+e^{y+z})^{\frac{1}{2}}}{z-y}+\frac{(2x+e^{y+z})^{-%
\frac{1}{2}}}{z-y}\right) \frac{\partial }{\partial z},
\end{eqnarray*}%
\begin{eqnarray*}
e_{3} &=&\left( \left( \frac{y^{2}+z^{2}}{2}\right) (2x+e^{y+z})^{\frac{1}{2}%
}\right) \frac{\partial }{\partial x}+\left( \frac{z(2x+e^{y+z})^{\frac{1}{2}%
}}{z-y}+\frac{(2x+e^{y+z})^{-\frac{1}{2}}}{y-z}\right) \frac{\partial }{%
\partial y} \\
&&+\left( \frac{y(2x+e^{y+z})^{\frac{1}{2}}}{y-z}+\frac{(2x+e^{y+z})^{-\frac{%
1}{2}}}{z-y}\right) \frac{\partial }{\partial z}.
\end{eqnarray*}%
Let $\eta $ be the $1$-form dual to $e_{1}$. The contact Riemannian
structure is defined as follows 
\begin{equation*}
\xi =e_{1},\varphi e_{1}=0,\varphi e_{2}=e_{3}\text{and }\varphi
e_{3}=-e_{2},
\end{equation*}%
\begin{equation*}
g(e_{i},e_{j})=\delta _{ij}\text{ }\ \text{for any }i, j\in \{1,2,3\}.\text{
\ \ \ \ \ \ \ \ \ \ }
\end{equation*}%
Thus \ it can be deduced that $(M,\varphi ,\xi ,\eta ,g)$ \ is a $(\kappa
,\mu ,\nu )$-contact metric manifold with $\kappa =1-\frac{1}{%
(2x+e^{y+z})^{2}},\mu =2$ and $\nu =\frac{-2}{(2x+e^{y+z})}.$

Next,using (\ref{PARA k<-1}) we can construct paracontact structure as
follows 
\begin{equation*}
\tilde{e}_{1}=\xi ,\tilde{e}_{2}=\frac{1}{\sqrt{2}}(e_{2}-e_{3}),\tilde{e}%
_{3}=\frac{1}{\sqrt{2}}(e_{2}+e_{3})\text{ such that }\tilde{\varphi}\tilde{e%
}_{1}=0,\tilde{\varphi}\tilde{e}_{2}=\tilde{e}_{3}\text{ and }\tilde{\varphi}%
\tilde{e}_{3}=\tilde{e}_{2}
\end{equation*}%
\begin{eqnarray*}
\tilde{g}(\tilde{e}_{1},\tilde{e}_{1}) &=&1,\text{ \ }\tilde{g}(\tilde{e}%
_{2},\tilde{e}_{2})=-1,\text{ \ }\tilde{g}(\tilde{e}_{3},\tilde{e}_{3})=1%
\text{ and} \\
\tilde{g}(\tilde{e}_{1},\tilde{e}_{2}) &=&\tilde{g}(\tilde{e}_{1},\tilde{e}%
_{3})=\tilde{g}(\tilde{e}_{2},\tilde{e}_{3})=0.\text{ \ \ \ \ \ \ \ \ \ \ }
\end{eqnarray*}%
Moreover, the matrix form of $\tilde{h}$ is given by 
\begin{equation*}
\tilde{h}=\left( 
\begin{array}{ccc}
0 & -\lambda & 0 \\ 
\lambda & 0 & 0 \\ 
0 & 0 & 0%
\end{array}%
\right) ,
\end{equation*}%
where $\lambda =\sqrt{1-\kappa }.$ After calculations, we finally deduce
that $(M,\tilde{\varphi},\xi ,\eta ,\tilde{g})$ is a $(\tilde{\kappa},\tilde{%
\mu},\tilde{\nu})$-paracontact metric manifold $\tilde{\kappa}=\kappa -2$, $%
\tilde{\mu}=2$ and $\tilde{\nu}=-\nu $.
\end{example}

\begin{remark}
\label{rema1}In the last example $\nu $ is a non-constant smooth function.
\end{remark}

\begin{remark}
\label{rema2}Choosing $\nu $ is a constant function, we can construct a
family of $(\tilde{\kappa}<-1,\tilde{\mu}=2,\tilde{\nu}=-\nu )$-paracontact
metric manifolds.
\end{remark}

We will finish this section by the following theorem.

\begin{theorem}
\label{TANGENT SPHERE}\textit{Let }$(M,\tilde{\varphi},\xi ,\eta ,\tilde{g})$%
\textit{\ be a }$3$\textit{-dimensional\textit{\ paracontact }}$(\tilde{%
\kappa},\tilde{\mu},\tilde{\nu})$\textit{\textit{-manifold }. If \ the
characteristic vector field }$\xi :(M,\tilde{g})\rightarrow (T_{1}M,\tilde{g}%
^{s})$ is harmonic map\textit{\ then paracontact }$(\tilde{\kappa},\tilde{\mu%
},\tilde{\nu})$\textit{-manifold is paracontact }$(\tilde{\kappa},\tilde{\mu}%
)$-manifold, i.e. $\tilde{\nu}=0$.
\end{theorem}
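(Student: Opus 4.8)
The plan is to use the \emph{second} condition built into the harmonicity of a map into the unit tangent sphere bundle. By \eqref{TM}, $\xi :(M,g)\rightarrow (T_{1}M,g^{s})$ is a harmonic map if and only if $\xi $ is a harmonic vector field \emph{and}, in addition, $tr[\tilde{R}(\tilde{\nabla}_{\cdot }\xi ,\xi )\cdot ]=0$. Since $M$ is assumed to be a paracontact $(\tilde{\kappa},\tilde{\mu},\tilde{\nu})$-manifold, the converse part of Theorem \ref{k mu vu} already guarantees that $\xi $ is a harmonic vector field; hence the whole content of the hypothesis is the vanishing of the trace term, and this is the identity I would turn into an equation for $\tilde{\nu}$.

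First I would fix a pseudo-orthonormal frame $\{e_{1},e_{2},e_{3}\}$ with $\tilde{g}(e_{i},e_{i})=\varepsilon _{i}$, set $Z_{i}:=\tilde{\nabla}_{e_{i}}\xi $, and write the trace term as $\sum_{i}\varepsilon _{i}\tilde{R}(Z_{i},\xi )e_{i}$. Substituting $Z_{i}=-\tilde{\varphi}e_{i}+\tilde{\varphi}\tilde{h}e_{i}$ from \eqref{nablaxi}, using $\tilde{R}(Z_{i},\xi )e_{i}=-\tilde{R}(\xi ,Z_{i})e_{i}$ and then the explicit formula \eqref{RZETAXY} for $\tilde{R}_{\xi X}Y$, the sum splits into a $\tilde{\kappa}$-part, a $\tilde{\mu}$-part and a $\tilde{\nu}$-part, each being a combination of traces of endomorphisms built from $\tilde{\varphi}$ and $\tilde{h}$ together with terms of the form $\sum_{i}\varepsilon _{i}\eta (e_{i})(\cdots )$.

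The key step is the evaluation of these traces. I expect the $\tilde{\kappa}$- and $\tilde{\mu}$-parts to cancel completely: their scalar coefficients reduce to $tr\,\tilde{\varphi}$, $tr(\tilde{\varphi}\tilde{h})$ and $tr(\tilde{h}\tilde{\varphi}\tilde{h})$, all of which vanish because $tr\,\tilde{h}=0$, $\tilde{\varphi}\tilde{h}=-\tilde{h}\tilde{\varphi}$ and $\tilde{\varphi}^{2}\tilde{h}=\tilde{h}$; while every term of the form $\sum_{i}\varepsilon _{i}\eta (e_{i})Z_{i}$ vanishes since $\sum_{i}\varepsilon _{i}\eta (e_{i})e_{i}=\xi $ and $\tilde{\varphi}\xi =\tilde{h}\xi =0$. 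The surviving $\tilde{\nu}$-part rests on $tr(\tilde{\varphi}\tilde{h}\tilde{\varphi}\tilde{h})=-tr(\tilde{\varphi}^{2}\tilde{h}^{2})$, which by \eqref{H2} equals $-(1+\tilde{\kappa})\,tr\,\tilde{\varphi}^{2}=-2(1+\tilde{\kappa})$. Collecting the pieces I expect to arrive at
\begin{equation*}
\sum_{i}\varepsilon _{i}\tilde{R}(\tilde{\nabla}_{e_{i}}\xi ,\xi )e_{i}=2\tilde{\nu}(1+\tilde{\kappa})\xi ,
\end{equation*}
so that the harmonic-map condition forces $\tilde{\nu}(1+\tilde{\kappa})=0$.

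Finally I would close with a case distinction governed by the canonical form of $\tilde{h}$. On the locus where $\tilde{\kappa}\neq -1$ — that is, where $\tilde{h}$ is of type $\mathfrak{h}_{1}$ or $\mathfrak{h}_{3}$ — the relation $\tilde{\nu}(1+\tilde{\kappa})=0$ yields $\tilde{\nu}=0$ immediately. The relation degenerates only where $\tilde{\kappa}=-1$; there either $\tilde{h}=0$, in which case the $\tilde{\varphi}\tilde{h}$-term of \eqref{PARAKMU} vanishes identically and $\tilde{\nu}$ is immaterial, or $\tilde{h}$ is of $\mathfrak{h}_{2}$ type, in which case the computation of Case~2 in the proof of Theorem \ref{k mu vu} already shows that $\tilde{R}(X,Y)\xi $ carries no $\tilde{\varphi}\tilde{h}$-term, i.e.\ $\tilde{\nu}=0$. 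Hence $\tilde{\nu}\equiv 0$ on the open dense set $U_{1}\cup U_{2}$, and therefore on all of $M$, so that $M$ is a paracontact $(\tilde{\kappa},\tilde{\mu})$-manifold. The only genuinely delicate point is the sign and trace bookkeeping in the middle step; note that the $\tilde{\kappa}=-1$ locus is handled not by the trace identity (which there reads $0=0$) but through the canonical form of $\tilde{h}$.
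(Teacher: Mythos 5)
Your proposal is correct, and it reaches the paper's conclusion by a genuinely different computation. The paper, like you, first reduces the hypothesis to the trace condition \eqref{TM} (the harmonic vector field part being automatic for a $(\tilde{\kappa},\tilde{\mu},\tilde{\nu})$-manifold), but it then evaluates $tr[\tilde{R}(\tilde{\nabla}_{\cdot}\xi ,\xi )\cdot]$ three separate times, once for each canonical form of $\tilde{h}$, using the explicit frame formulas of Lemmas \ref{K>-1}, \ref{de2} and \ref{lem3} together with \eqref{CEM}: it finds $2\tilde{\lambda}^{2}\tilde{\nu}\xi$ in the $\mathfrak{h}_{1}$ case, $0$ in the $\mathfrak{h}_{2}$ case, and $-2\tilde{\lambda}^{2}\tilde{\nu}\xi$ in the $\mathfrak{h}_{3}$ case. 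Your frame-independent computation — substituting $\tilde{\nabla}\xi =-\tilde{\varphi}+\tilde{\varphi}\tilde{h}$ into \eqref{RZETAXY} and using $tr\,\tilde{\varphi}=tr(\tilde{\varphi}\tilde{h})=tr(\tilde{h}\tilde{\varphi}\tilde{h})=0$, $\tilde{\varphi}\tilde{h}\tilde{\varphi}\tilde{h}=-\tilde{\varphi}^{2}\tilde{h}^{2}$ and \eqref{H2} — yields the single identity $tr[\tilde{R}(\tilde{\nabla}_{\cdot}\xi ,\xi )\cdot]=2\tilde{\nu}(1+\tilde{\kappa})\xi$, whose specializations $1+\tilde{\kappa}=\tilde{\lambda}^{2},\,0,\,-\tilde{\lambda}^{2}$ are exactly the paper's three answers; I checked your trace bookkeeping and the signs, and they are right. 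What your route buys is brevity and an explanation of the structure: the coefficient $2(1+\tilde{\kappa})\tilde{\nu}$ makes it transparent why the only delicate locus is $\tilde{\kappa}=-1$, where the trace identity is vacuous. What the paper's route buys is consistency with the case-based machinery it has already built, and explicitness of the frames. On the degenerate locus both arguments are the same: the paper's phrase ``in this case we know that $\tilde{\nu}$ vanishes'' is precisely your observation that in the $\mathfrak{h}_{2}$ case $\tilde{\varphi}\tilde{h}=-\tilde{h}$, so the $\tilde{\nu}$-term is absorbed and Case 2 of the proof of Theorem \ref{k mu vu} exhibits $\tilde{R}(X,Y)\xi$ with no $\tilde{\varphi}\tilde{h}$-component; this normalization step is no less rigorous in your write-up than in the paper's.
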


\begin{proof}
Using the definition of paracontact $(\tilde{\kappa},\tilde{\mu},\tilde{\nu}%
) $-manifold and properties of curvature tensor one has%
\begin{equation}
\tilde{R}(\xi ,W)X=\tilde{\kappa}(\tilde{g}(X,W)\xi -\eta (X)W)+\tilde{\mu}(%
\tilde{g}(\tilde{h}X,W)\xi -\eta (X)\tilde{h}W)+\tilde{\nu}(\tilde{g}(\tilde{%
\varphi}\tilde{h}X,W)\xi -\eta (X)\tilde{\varphi}\tilde{h}W).  \label{CEM}
\end{equation}%
Since the characteristic vector field $\xi $ is harmonic vector field for
the paracontact $(\tilde{\kappa},\tilde{\mu},\tilde{\nu})$-manifold it is
enough to calculate (\ref{TM}).

\textit{Case 1: }Assume that\textit{\ }$\tilde{\kappa}>-1.$

Using (\ref{A1}), (\ref{irem0}) and (\ref{CEM}) in (\ref{TM}), we get%
\begin{equation*}
tr[R(\nabla .\xi ,\xi ).]=(\tilde{\lambda}-1)\tilde{R}(\xi ,\tilde{\varphi}%
\tilde{e})\tilde{e}+(\tilde{\lambda}+1)\tilde{R}(\xi ,\tilde{e})\tilde{%
\varphi}\tilde{e}=2\tilde{\lambda^2}\tilde{\nu}\xi .
\end{equation*}%
So, we conclude that $tr[R(\nabla .\xi ,\xi ).]=0$ if and only if $\tilde{\nu%
}=0.$

\textit{Case 2: \ }We \ suppose that \textit{\ }$\tilde{\kappa}=-1.$ In this
case we know that $\tilde{\nu}$ vanishes.

By help of (\ref{cm1}), (\ref{cm2}), (\ref{irem3}) and (\ref{CEM}), we
obtain directly $tr[R(\nabla .\xi ,\xi ).]=0.$

\textit{Case 3: }Now we consider $\tilde{\kappa}<-1.$

Using (\ref{A3}), (\ref{iremmm0}) and (\ref{CEM}) in (\ref{TM}), we have%
\begin{equation*}
tr[R(\nabla .\xi ,\xi ).]=-2\tilde{\lambda}^{2}\tilde{\nu}\xi .
\end{equation*}%
Therefore, we deduce that $tr[R(\nabla .\xi ,\xi ).]=0$ if and only if $%
\tilde{\nu}=0.$

Thus, we complete the proof of the theorem.
\end{proof}

\section{An Application}

\label{fifth}

Now we will give some properties of $3$-dimensional contact metric manifolds.

Let $(M,\varphi ,\xi ,\eta ,g)$ be a contact metric $3$-manifold. Let%
\begin{eqnarray*}
U &=&\left\{ p\in M\mid h(p)\neq 0\right\} \subset M, \\
U_{0} &=&\left\{ p\in M\mid h(p)=0\text{, in a neighborhood of p}\right\}
\subset M.
\end{eqnarray*}

That $h$ is a smooth function on $M$ implies $U\cup U_{0}$ is an open and
dense subset of $M$, so any property satisfied in $U_{0}\cup U$ is also
satisfied in $M.$ For any point $p\in U\cup U_{0}$, there exists a local
orthonormal basis $\left\{ e,\varphi e,\xi \right\} $ of smooth eigenvectors
of h in a neighborhood of $p~$(this we call a $\varphi $-basis). On $U$, we
put $he=\lambda e,$ $h\varphi e=-\lambda \varphi e$, where $\lambda $ is a
nonvanishing smooth function assumed to be positive.

\begin{lemma}[\protect\cite{FP1}]
\label{CALVARUSO}(see also \cite{cal}) On the open set $U$ we have%
\begin{eqnarray}
\nabla _{\xi }e &=&a\varphi e,\text{ }\nabla _{e}e=b\varphi e,\text{ }\nabla
_{\varphi e}e=-c\varphi e+(\lambda -1)\xi ,  \label{3.1} \\
\nabla _{\xi }\varphi e &=&-ae,\nabla _{e}\varphi e=-be+(1+\lambda )\xi
,\nabla _{\varphi e}\varphi e=ce,  \label{3.2} \\
\nabla _{\xi }\xi &=&0,\text{\ }\nabla _{e}\xi =-(1+\lambda )\varphi e,\text{
}\nabla _{\varphi e}\xi =(1-\lambda )e,  \label{3.3} \\
\nabla _{\xi }h &=&-2ah\varphi +\xi (\lambda )s,  \label{3.4}
\end{eqnarray}%
\textit{where }$a$\textit{\ is a smooth function,}
\end{lemma}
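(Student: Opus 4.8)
The plan is to extract all four displayed identities from a single structural ingredient, the contact metric equation $\nabla_X\xi=-\varphi X-\varphi hX$, combined with the compatibility relations forced by $\nabla g=0$ on the orthonormal $\varphi$-basis $\{e,\varphi e,\xi\}$. Everything is carried out on $U$, where $\lambda$ is nonvanishing and smooth and $e,\varphi e$ are smooth unit eigenvector fields of the symmetric operator $h$, so that covariant differentiation of the frame is legitimate. First I would prove \eqref{3.3}: substituting $X=\xi,e,\varphi e$ into $\nabla_X\xi=-\varphi X-\varphi hX$ and using $he=\lambda e$, $h\varphi e=-\lambda\varphi e$, $h\xi=0$, $\varphi\xi=0$ together with $\varphi^2=-I+\eta\otimes\xi$ (hence $\varphi^2 e=-e$), each line collapses after one step; for instance $\nabla_{\varphi e}\xi=-\varphi^2 e+\lambda\varphi^2 e=(1-\lambda)e$, and similarly $\nabla_e\xi=-(1+\lambda)\varphi e$ and $\nabla_\xi\xi=0$.

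Next I would derive \eqref{3.1} and \eqref{3.2} from orthonormality. Because $\{e,\varphi e,\xi\}$ is orthonormal, $\nabla g=0$ gives $g(\nabla_X e,e)=g(\nabla_X\varphi e,\varphi e)=0$ together with the skew-symmetry relations $g(\nabla_X e,\varphi e)=-g(\nabla_X\varphi e,e)$, $g(\nabla_X e,\xi)=-g(e,\nabla_X\xi)$ and $g(\nabla_X\varphi e,\xi)=-g(\varphi e,\nabla_X\xi)$ for each $X$. The last two relations express the $\xi$-components of $\nabla_X e$ and $\nabla_X\varphi e$ entirely through the already-known $\nabla_X\xi$ of \eqref{3.3}, leaving only one free coefficient per direction. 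Introducing the smooth functions $a:=g(\nabla_\xi e,\varphi e)$, $b:=g(\nabla_e e,\varphi e)$ and $c:=g(\nabla_{\varphi e}\varphi e,e)$ then reproduces \eqref{3.1} and \eqref{3.2} verbatim; for instance, from $\nabla_{\varphi e}\xi=(1-\lambda)e$ one gets $g(\nabla_{\varphi e}e,\xi)=\lambda-1$, so that $\nabla_{\varphi e}e=-c\varphi e+(\lambda-1)\xi$, and likewise $\nabla_e e=b\varphi e$, $\nabla_e\varphi e=-be+(1+\lambda)\xi$, $\nabla_\xi e=a\varphi e$, $\nabla_\xi\varphi e=-ae$.

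Finally, \eqref{3.4} follows algebraically. I would evaluate $(\nabla_\xi h)Y=\nabla_\xi(hY)-h(\nabla_\xi Y)$ on the frame, using $he=\lambda e$, $h\varphi e=-\lambda\varphi e$, $h\xi=0$ and the relations $\nabla_\xi e=a\varphi e$, $\nabla_\xi\varphi e=-ae$, $\nabla_\xi\xi=0$ just obtained. This yields $(\nabla_\xi h)\xi=0$, $(\nabla_\xi h)e=\xi(\lambda)e+2a\lambda\varphi e$ and $(\nabla_\xi h)\varphi e=2a\lambda e-\xi(\lambda)\varphi e$, which agree with $-2ah\varphi+\xi(\lambda)s$ applied to the frame once $s$ is taken to be the $(1,1)$-tensor with $s\xi=0$, $se=e$, $s\varphi e=-\varphi e$.

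The computations are routine, so there is no serious obstacle; the two points I would watch are the sign convention in $\nabla_X\xi=-\varphi X-\varphi hX$ and the bookkeeping separating determined from free coefficients. The conceptual crux, and the step I would verify most carefully, is that \eqref{3.3} already fixes every $\xi$-component of the connection, so that metric compatibility leaves precisely the three functions $a,b,c$ undetermined and the identities hold with no appeal to the torsion-free or contact integrability conditions beyond what is already encoded in $\nabla_X\xi$.
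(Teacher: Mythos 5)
Your proof is correct and follows essentially the same route as the cited source: the paper itself states this lemma without proof (quoting it from \cite{FP1}, see also \cite{cal}), and the derivation there is exactly yours — \eqref{3.3} from $\nabla_X\xi=-\varphi X-\varphi hX$, then \eqref{3.1}--\eqref{3.2} from metric compatibility on the orthonormal frame with the three free coefficients $a=g(\nabla_\xi e,\varphi e)$, $b=g(\nabla_e e,\varphi e)$, $c=g(\nabla_{\varphi e}\varphi e,e)$, and \eqref{3.4} by evaluating $\nabla_\xi h$ on the frame. No gaps; your sign conventions ($\varphi^2=-I+\eta\otimes\xi$ for the Riemannian contact case) match the paper's.
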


\begin{eqnarray}
b &=&\frac{1}{2\lambda }(\varphi e(\lambda )+A)\text{ \ with \ }A=\eta
(Qe)=S(\xi ,e),  \label{3.5} \\
c &=&\frac{1}{2\lambda }(e(\lambda )+B)\text{ \ with \ }B=\eta (Q\varphi
e)=S(\xi ,\varphi e),  \label{3.6}
\end{eqnarray}%
\textit{and }$s$\textit{\ is the type }$(1,1)$\textit{\ tensor field defined
by }$s\xi =0$\textit{, }$se=e$\textit{\ and }$s\varphi e=-\varphi e.$

In \cite{BO}, Boeckx provided a local classification of non-Sasakian $%
(\kappa ,\mu )$-contact metric manifold respect to the number 
\begin{equation}
I_{M}=\frac{1-\frac{\mu }{2}}{\sqrt{1-\kappa }},  \label{Boeckx invariant}
\end{equation}%
which is an invariant of a $(\kappa ,\mu )$-contact metric manifold up to ${%
\mathcal{D}}_{\alpha }$-homothetic deformations.

For $(\kappa ,%
\mu
,\nu )$-contact metric manifolds, T. Koufogiorgos et al. \cite{KMP} proved
that the following relations hold 
\begin{eqnarray}
h^{2} &=&(\kappa -1)\varphi ^{2}\text{ for }\kappa \leq 1,  \label{HAR1} \\
\xi (\kappa ) &=&2\nu (\kappa -1),\text{ \ }\xi (\lambda )=\nu (\lambda ),
\label{HAR3} \\
\nabla _{\xi }h &=&\mu h\varphi +\nu h.  \label{HAR3a}
\end{eqnarray}%
Recently, in \cite{KEM}, the authors gave the following local classification
of a non-Sasakian $(\kappa ,%
\mu
,\nu )$-contact metric manifolds with $\xi (I_{M})=0$.

\begin{theorem}[\protect\cite{KEM}]
\label{ZETAMU}Let $(M,\varphi ,\xi ,\eta ,g)$ be a non-Sasakian $(\kappa
,\mu ,\nu =const.)$-contact metric manifold and $\xi (I_{M})=0$, where $\nu
=const.$ $\neq 0$. Then

$1)$ At any point of $M$, precisely one of the following relations is valid: 
$\mu =2(1+\sqrt{1-\kappa }),$ or $\mu =2(1-\sqrt{1-\kappa })$

$2)$ At any point $p\in M$ there exists a chart $(U,(x,y,z))$ with $p\in
U\subseteq M,$ such that

\ \ \ \ \ \ $i)$ the functions $\kappa ,\mu $ depend only on the variables $%
x $, $z.$

\ \ \ \ \ \ $ii)$ if $\mu =2(1+\sqrt{1-\kappa }),$ $($resp. $\mu =2(1-\sqrt{%
1-\kappa })),$ the tensor fields $\eta $, $\xi $, $\varphi $, $g$, $h$ are
given by the relations,%
\begin{equation*}
\xi =\frac{\partial }{\partial x},\text{ \ \ }\eta =dx-adz,
\end{equation*}%
\begin{equation*}
g=\left( 
\begin{array}{ccc}
1 & 0 & -a \\ 
0 & 1 & -b \\ 
-a & -b & 1+a^{2}+b^{2}%
\end{array}%
\right) \text{ \ \ \ \ }\left( \text{resp. \ \ }g=\left( 
\begin{array}{ccc}
1 & 0 & -a \\ 
0 & 1 & -b \\ 
-a & -b & 1+a^{2}+b^{2}%
\end{array}%
\right) \right) ,
\end{equation*}%
\begin{equation*}
\varphi =\left( 
\begin{array}{ccc}
0 & a & -ab \\ 
0 & b & -1-b^{2} \\ 
0 & 1 & -b%
\end{array}%
\right) \text{ \ \ \ \ }\left( \text{resp. \ \ }\varphi =\left( 
\begin{array}{ccc}
0 & -a & ab \\ 
0 & -b & 1+b^{2} \\ 
0 & -1 & b%
\end{array}%
\right) \right) ,
\end{equation*}%
\begin{equation*}
h=\left( 
\begin{array}{ccc}
0 & 0 & -a\lambda \\ 
0 & \lambda & -2\lambda b \\ 
0 & 0 & -\lambda%
\end{array}%
\right) \text{ \ \ \ \ \ }\left( \text{resp. \ \ }h=\left( 
\begin{array}{ccc}
0 & 0 & a\lambda \\ 
0 & -\lambda & 2\lambda b \\ 
0 & 0 & \lambda%
\end{array}%
\right) \right) ,
\end{equation*}%
with respect to the basis $\left( \frac{\partial }{\partial x},\frac{%
\partial }{\partial y},\frac{\partial }{\partial z}\right) ,$ where $%
a=2y+f(z)$ \ \ (resp.\ $a=-2y+f(z)$), $b=-\frac{y^{2}}{2}\nu -y\frac{f(z)}{2}%
\nu -\frac{y}{2}\frac{r^{^{\prime }}(z)}{r(z)}+\frac{2}{\nu }r(z)e^{\nu
x}+s(z)$ (resp. $b=\frac{y^{2}}{2}\nu -y\frac{f(z)}{2}\nu -\frac{y}{2}\frac{%
r^{^{\prime }}(z)}{r(z)}+\frac{2}{\nu }r(z)e^{\nu x}+s(z)$),\ $\lambda
=\lambda (x,z)=r(z)e^{\nu x}$ \ and $f(z)$, $r(z)$, $s(z)$ are arbitrary
smooth functions of $z$.
\end{theorem}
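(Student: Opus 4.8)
The plan is to work throughout on the open set $U=\{p\in M\mid h(p)\neq 0\}$, where $\lambda=\sqrt{1-\kappa}$ is a positive smooth function by \eqref{HAR1} and $\{e,\varphi e,\xi\}$ is the $\varphi$-basis of Lemma~\ref{CALVARUSO}. First I would record what the $(\kappa,\mu,\nu)$-condition forces in this frame. Since $R(X,Y)\xi$ has the prescribed form, $\xi$ is a Ricci eigenvector with $Q\xi=2\kappa\xi$, whence $S(\xi,e)=S(\xi,\varphi e)=0$; so the constants $A,B$ of \eqref{3.5}--\eqref{3.6} vanish and $b=\varphi e(\lambda)/(2\lambda)$, $c=e(\lambda)/(2\lambda)$. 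Comparing the two expressions \eqref{HAR3a} and \eqref{3.4} for $\nabla_\xi h$ (and using $h=\lambda s$ on the basis) gives $a=-\mu/2$ together with $\xi(\lambda)=\nu\lambda$, where $a$ is the function of \eqref{3.1}. In particular $1-\mu/2=1+a$, so the invariant \eqref{Boeckx invariant} reads $I_M=(1+a)/\lambda$, and the content of Part~(1) is exactly $I_M=\pm1$, i.e. $(1+a)^2=\lambda^2$.

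For Part~(1) I would next compute the transverse derivatives of $a$. Evaluating $R(\xi,e)\varphi e$ and $R(\xi,\varphi e)e$ from the structure equations \eqref{3.1}--\eqref{3.3} and matching them with \eqref{THREE DIM CURVATURE} (in which $Q\xi=2\kappa\xi$ kills every cross term, leaving only a $\xi$-component, so the $e$- and $\varphi e$-components of the frame computation must vanish), and combining this with the integrability of $\lambda$ obtained by applying $[\xi,e]$ and $[\xi,\varphi e]$ to $\lambda$ (which gives $\xi(b)=-(1+a-\lambda)c$ and $\xi(c)=(1+a+\lambda)b$), yields the two relations $e(a)=e(\lambda)$ and $\varphi e(a)=-\varphi e(\lambda)$. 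The hypothesis $\xi(I_M)=0$ together with $\xi(\lambda)=\nu\lambda$ gives the last derivative $\xi(a)=\nu(1+a)$.

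The decisive step is then a closed one-form argument. Set $w:=(1+a)^2-\lambda^2$, so Part~(1) is the claim $w\equiv0$. Using crucially that $\nu$ is constant, the integrability conditions $[\xi,e]\,a=\xi(e(a))-e(\xi(a))$ and $[\xi,\varphi e]\,a=\xi(\varphi e(a))-\varphi e(\xi(a))$ collapse, after substituting the derivatives above, to $b\,(1+a+\lambda)=0$ and $c\,(1+a-\lambda)=0$. Feeding these into the transverse derivatives of $w$ gives $e(w)=4\lambda c(1+a-\lambda)=0$ and $\varphi e(w)=-4\lambda b(1+a+\lambda)=0$, while $\xi(w)=2\nu w$. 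Expanding in the coframe dual to $\{e,\varphi e,\xi\}$ we get $dw=2\nu w\,\eta$, and applying $d$ once more, $0=d^{2}w=2\nu\,dw\wedge\eta+2\nu w\,d\eta=2\nu w\,d\eta$, because $dw\wedge\eta\propto\eta\wedge\eta=0$. Since $\nu\neq0$ and $d\eta$ is nondegenerate on $\ker\eta$, this forces $w\equiv0$ on $U$. Hence $1+a=\pm\lambda$, i.e. $\mu=2(1\mp\sqrt{1-\kappa})$, and continuity selects a single sign on each connected component, which is Part~(1).

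For Part~(2) I would exploit that the branch is now fixed: if $I_M=1$ then $1+a=\lambda$, so $b(1+a+\lambda)=0$ forces $b=0$ (and symmetrically $c=0$ when $I_M=-1$), which greatly simplifies \eqref{3.1}--\eqref{3.3}. Since $\nabla_\xi\xi=0$ the integral curves of $\xi$ are geodesics, so I would choose a coordinate $x$ with $\xi=\partial/\partial x$ and integrate $\xi(\lambda)=\nu\lambda$ to obtain $\lambda=r(z)e^{\nu x}$. Completing $(x,y,z)$ adapted to the remaining frame directions and integrating the simplified structure equations expresses $e,\varphi e$ in the coordinate basis, and reading off $\eta,g,\varphi,h$ gives the listed matrices, with the free functions $f(z),r(z),s(z)$ entering as integration data. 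I expect the main obstacle to be precisely this explicit integration: organizing the non-autonomous system so that the coordinate frame closes up and the stated closed forms (for the coordinate functions denoted $a=2y+f(z)$, $b$, and for $\lambda$) emerge, and then verifying a posteriori that the resulting tensors satisfy all the contact metric axioms and the $(\kappa,\mu,\nu)$-relation with the prescribed $\mu$.
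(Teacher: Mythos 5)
The paper itself does not prove this theorem: it is imported verbatim from \cite{KEM}, so your proposal has to be judged on its own merits rather than against an internal argument. Your Part (1) is correct, and every step checks out against Lemma \ref{CALVARUSO} and the identities \eqref{HAR1}--\eqref{HAR3a}: the $(\kappa ,\mu ,\nu )$-condition gives $Q\xi =2\kappa \xi $, hence $A=B=0$ in \eqref{3.5}--\eqref{3.6}; comparing \eqref{HAR3a} with \eqref{3.4} gives $a=-\mu /2$ and $\xi (\lambda )=\nu \lambda $; the curvature identities for $R(\xi ,e)\varphi e$ and $R(\xi ,\varphi e)e$ (whose $e$- and $\varphi e$-components vanish by \eqref{THREE DIM CURVATURE} and $Q\xi =2\kappa \xi $), together with $[\xi ,e]\lambda $ and $[\xi ,\varphi e]\lambda $, yield $\xi (b)=-(1+a-\lambda )c$, $\xi (c)=(1+a+\lambda )b$, $e(a)=e(\lambda )=2\lambda c$ and $\varphi e(a)=-\varphi e(\lambda )=-2\lambda b$; the commutator identities applied to $a$ (where constancy of $\nu $ enters, exactly as you note) give $b(1+a+\lambda )=c(1+a-\lambda )=0$; and then $dw=2\nu w\,\eta $ for $w=(1+a)^{2}-\lambda ^{2}$, so $0=d^{2}w=2\nu w\,d\eta $, and $\nu \neq 0$ plus the contact condition force $w\equiv 0$. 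This $d^{2}=0$ argument for Part (1) is clean and correct.

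Part (2), however, is a genuine gap: what you give is a plan, not a proof, and the plan omits the one structural fact that makes the integration possible. The content of Part (2) is the explicit local model --- the existence of the chart, statement 2(i) that $\kappa ,\mu $ depend only on $x,z$, and the closed formulas $a=\pm 2y+f(z)$, the expression for $b$ involving $r^{\prime }(z)/r(z)$, and $\lambda =r(z)e^{\nu x}$. Your outline correctly extracts $b=0$ on the branch $I_{M}=1$ and $c=0$ on the branch $I_{M}=-1$, but ``choose $x$ with $\xi =\partial /\partial x$ and integrate $\xi (\lambda )=\nu \lambda $'' only yields $\lambda =F(y,z)e^{\nu x}$ with an undetermined $F$; to kill the $y$-dependence one must first observe that on the branch $c=0$ (resp. $b=0$) one has $[e,\xi ]=0$ and $e(\lambda )=0$ (resp. $[\varphi e,\xi ]=0$ and $\varphi e(\lambda )=0$), so that $\xi $ and the $h$-eigenvector field along which $\lambda $ is constant can be simultaneously straightened to $\partial /\partial x$ and $\partial /\partial y$; the remaining brackets, e.g. $[e,\xi ]=-2\lambda \varphi e$ and $[e,\varphi e]=c\varphi e+2\xi $ on the branch $b=0$, must then be integrated to produce the third coordinate and the stated matrices, with $f$, $r$, $s$ appearing as integration data, and finally one must verify a posteriori that the resulting tensors satisfy the contact metric axioms and the $(\kappa ,\mu ,\nu )$-relation. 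None of this is carried out --- you flag it yourself as ``the main obstacle'' --- so the classification statement, which is the bulk of the theorem, remains unproven in your proposal.
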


Now we are going to give a natural relation between non-Sasakian $(\kappa ,%
\mu
,\nu )$-contact metric manifolds with $\xi (I_{M})=0$ and $3$-dimensional
paracontact metric manifolds.

If Theorem \ref{motivation} is adapted for$\ $the $3$-dimensional
non-Sasakain $(\kappa ,\mu ,\nu )$-contact metric manifold and used same
procedure for proof then we have same result. Hence we can give following
theorem.

\begin{theorem}[\protect\cite{MOTE}]
\label{kmuvu}Let $(M,\varphi ,\xi ,\eta ,g)$ be a non-Sasakian $(\kappa ,\mu
,\nu )$-contact metric manifold. Then $M$ admits a canonical paracontact
metric structure $(\tilde{\varphi},\xi ,\eta ,\tilde{g})$ is given by 
\begin{equation}
\tilde{\varphi}:=\frac{1}{\sqrt{1-\kappa }}h,\ \ \tilde{g}:=\frac{1}{\sqrt{%
1-\kappa }}d\eta (\cdot ,h\cdot )+\eta \otimes \eta .  \label{PARA k<-1}
\end{equation}
\end{theorem}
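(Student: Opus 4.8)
The plan is to verify directly that the pair $(\tilde{\varphi},\tilde{g})$ defined by \eqref{PARA k<-1}, together with the original $\xi$ and $\eta$, satisfies all the axioms of a paracontact metric structure. This is a purely algebraic check that runs exactly as in the proof of Theorem \ref{motivation}; the only structural input is the relation \eqref{HAR1}, $h^{2}=(\kappa -1)\varphi ^{2}$, rather than the full $(\kappa ,\mu )$-nullity condition. First I would record the ambient contact metric identities to be used: $\varphi ^{2}=-I+\eta \otimes \xi $, $d\eta (X,Y)=g(X,\varphi Y)$, $h\xi =0$, $\eta \circ h=0$, together with the facts that $h$ is $g$-symmetric and anticommutes with $\varphi $. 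Since $M$ is non-Sasakian, $h\neq 0$ on the open dense set $U$, and because $h$ is self-adjoint with respect to the Riemannian metric $g$, the relation \eqref{HAR1} forces $\kappa <1$ there; hence $\sqrt{1-\kappa }$ is a well-defined positive function on $U$ and the formulas \eqref{PARA k<-1} make sense. Securing this inequality is the one point that must be checked before anything else, and it is where non-Sasakianity genuinely enters.

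Next I would check the almost paracontact axioms. Writing $\lambda :=\sqrt{1-\kappa }$, from \eqref{HAR1} one gets $\tilde{\varphi}^{2}=\frac{1}{\lambda ^{2}}h^{2}=\frac{\kappa -1}{1-\kappa }\varphi ^{2}=-\varphi ^{2}=I-\eta \otimes \xi $, while $\tilde{\varphi}\xi =\frac{1}{\lambda }h\xi =0$ and $\eta \circ \tilde{\varphi}=0$ follow from $h\xi =0$ and $\eta \circ h=0$. The eigenvalues of $\tilde{\varphi}$ on $\mathcal{D}=\ker \eta $ are $\pm 1$ (the $\pm \lambda $-eigenspaces of $h$, rescaled), and since $h\varphi =-\varphi h$ the isomorphism $\varphi $ interchanges these two eigenspaces, so they have equal dimension $n$; this is exactly the almost paracomplex condition. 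Thus $(\tilde{\varphi},\xi ,\eta )$ is an almost paracontact structure.

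I would then verify metric compatibility. First note that $\varphi h$ is $g$-self-adjoint (combining skew-symmetry of $\varphi $, symmetry of $h$, and $\varphi h=-h\varphi $), so $\tilde{g}$ is symmetric. Using \eqref{HAR1} in the form $h^{2}=\lambda ^{2}(I-\eta \otimes \xi )$ together with $\eta \circ h=0$, a short contraction gives $\tilde{g}(X,\tilde{\varphi}Y)=\frac{1}{\lambda ^{2}}g(X,\varphi h^{2}Y)=g(X,\varphi Y)=d\eta (X,Y)$, which is the contact condition; the same identities yield $\tilde{g}(\tilde{\varphi}X,\tilde{\varphi}Y)=\frac{1}{\lambda }g(hX,\varphi Y)=-\tilde{g}(X,Y)+\eta (X)\eta (Y)$, the defining compatibility of an almost paracontact metric structure.

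Finally I would settle the signature, which I expect to be the only genuinely delicate point. Choosing a local $\varphi $-basis $\{e_{i},\varphi e_{i},\xi \}$ of eigenvectors of $h$ with $he_{i}=\lambda e_{i}$, orthonormal in the Riemannian sense, a direct evaluation yields $\tilde{g}(e_{i},e_{i})=\tilde{g}(\varphi e_{i},\varphi e_{i})=0$ and $\tilde{g}(e_{i},\varphi e_{i})=1$, so $\tilde{g}$ restricts to a hyperbolic (signature $(1,1)$) form on each plane $\mathrm{span}\{e_{i},\varphi e_{i}\}$, while $\tilde{g}(\xi ,\xi )=\eta (\xi )^{2}=1$. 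Hence $\tilde{g}$ is nondegenerate of signature $(n+1,n)$, and $(\tilde{\varphi},\xi ,\eta ,\tilde{g})$ is a paracontact metric structure. The main obstacle is simply keeping the sign bookkeeping consistent across these contractions; once $\kappa <1$ is secured, no idea beyond the proof of Theorem \ref{motivation} is needed, since none of those computations used constancy of $\kappa $, $\mu $, or the vanishing of $\nu $.
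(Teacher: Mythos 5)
Your proposal is correct and follows essentially the same route as the paper: the paper simply observes that the proof of Theorem \ref{motivation} from \cite{MOTE} carries over verbatim, since its algebraic verifications use only pointwise identities (chiefly $h^{2}=(\kappa -1)\varphi ^{2}$, i.e.\ \eqref{HAR1}, together with $h\xi =0$, $h\varphi =-\varphi h$ and the symmetry of $h$), none of which require constancy of $\kappa ,\mu $ or vanishing of $\nu $. Your write-up just makes that adaptation explicit, including the correct observation that non-Sasakianity forces $\kappa <1$ on the relevant open dense set so that $\sqrt{1-\kappa }$ is defined.
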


After a long but straightforward calculation as in \cite{MOTE}, we get

\begin{proposition}
\label{RConnection}Let $(M,\varphi ,\xi ,\eta ,g)$ be a non-Sasakian $%
(\kappa ,\mu ,\nu )$-contact metric manifold. Then the Levi-Civita
connections $\nabla $ and $\tilde{\nabla}$ of $g$ and $\tilde{g}$ are
related as 
\begin{eqnarray}
\tilde{\nabla}_{X}Y &=&\nabla _{X}Y+\frac{1}{2(1-\kappa )}\varphi h(\nabla
_{X}\varphi h)Y-\frac{1}{\sqrt{1-\kappa }}\eta (Y)hX-\frac{1}{\sqrt{1-\kappa 
}}\eta (X)hY  \notag \\
&&-\frac{1}{2}\eta (Y)\varphi hX-\frac{(1-\mu )}{2}\eta (Y)\varphi X-\frac{%
\nu }{2}\eta (Y)\varphi ^{2}X  \notag \\
&&+(\frac{1}{2\sqrt{1-\kappa }}g(hX,Y)+\sqrt{1-\kappa }g(X,Y)-\sqrt{1-\kappa 
}\eta (X)\eta (Y)  \label{RELATED CONNECTION} \\
&&+\frac{(1-\mu )}{2\sqrt{1-\kappa }}g(hX,Y)-g(X,\varphi Y)+X(\eta (Y))-\eta
(\nabla _{X}Y))\xi  \notag \\
&&-\frac{1}{2}(1-\kappa )(X(\frac{1}{\sqrt{1-\kappa }})\varphi ^{2}Y+Y(\frac{%
1}{\sqrt{1-\kappa }})\varphi ^{2}X+  \notag \\
&&+\frac{1}{(1-\kappa )}g(X,\varphi hY)\varphi h \text{grad}(\frac{1}{\sqrt{%
1-\kappa }})),  \notag
\end{eqnarray}%
for any $X,Y\in \Gamma (TM)$.
\end{proposition}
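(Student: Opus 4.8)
The plan is to determine the difference tensor $D(X,Y):=\tilde{\nabla}_{X}Y-\nabla_{X}Y$ directly. Since both $\nabla$ and $\tilde{\nabla}$ are torsion-free, $D$ is symmetric in $X,Y$, and since $\tilde{\nabla}$ is the Levi-Civita connection of $\tilde{g}$ it is characterized by $\tilde{\nabla}\tilde{g}=0$. First I would rewrite the defining relation \eqref{PARA k<-1} in operator form: using $d\eta(X,Y)=g(X,\varphi Y)$ and the $g$-symmetry of $\varphi h$, one obtains $\tilde{g}(X,Y)=g(PX,Y)$, where $P:=\frac{1}{\sqrt{1-\kappa}}\,\varphi h+\eta\otimes\xi$. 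The crucial preliminary observation is that $P$ is a $g$-symmetric involution: using $(\varphi h)^{2}=(1-\kappa)(\mathrm{id}-\eta\otimes\xi)$, which follows from \eqref{HAR1}, a short computation gives $P^{2}=\mathrm{id}$, hence $P^{-1}=P$. This makes raising an index with respect to $\tilde{g}$ completely explicit: a $\tilde{g}$-metric dual is obtained from the corresponding $g$-metric dual by applying $P$.

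Next I would compute $\nabla_{X}\tilde{g}$. Because $\nabla g=0$, differentiating $\tilde{g}=\frac{1}{\sqrt{1-\kappa}}\,g(\varphi h\,\cdot\,,\cdot)+\eta\otimes\eta$ produces exactly three kinds of contributions: the scalar term $X\!\left(\tfrac{1}{\sqrt{1-\kappa}}\right)g(\varphi hY,Z)$, the term $\frac{1}{\sqrt{1-\kappa}}\,g\big((\nabla_{X}\varphi h)Y,Z\big)$, and the $\eta$-terms coming from $(\nabla_{X}\eta)(Y)=g(\nabla_{X}\xi,Y)=-g(\varphi X+\varphi hX,Y)$. I would then feed $\nabla\tilde{g}$ into the Koszul-type identity
\[
2\,\tilde{g}(D(X,Y),Z)=(\nabla_{X}\tilde{g})(Y,Z)+(\nabla_{Y}\tilde{g})(X,Z)-(\nabla_{Z}\tilde{g})(X,Y),
\]
which follows at once from $\tilde{\nabla}\tilde{g}=0$ together with the symmetry of $D$. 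Solving for $D(X,Y)$ then requires only applying $P=P^{-1}$ to the $g$-metric dual of the right-hand side.

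To turn this into the explicit formula \eqref{RELATED CONNECTION}, I would insert the structure data of a non-Sasakian $(\kappa,\mu,\nu)$-contact metric manifold. The covariant derivatives of the frame vectors in Lemma~\ref{CALVARUSO}, together with \eqref{HAR3a}, namely $\nabla_{\xi}h=\mu h\varphi+\nu h$, and the scalar relations \eqref{HAR3}, namely $\xi(\kappa)=2\nu(\kappa-1)$ and $\xi(\lambda)=\nu\lambda$, allow one to write $\nabla_{X}\varphi h$ in closed form and to evaluate every term of the Koszul expression on the $\varphi$-basis $\{e,\varphi e,\xi\}$. Collecting the resulting pieces and re-expressing $\varphi h$-factors by means of $\lambda^{2}=1-\kappa$ and $P^{2}=\mathrm{id}$ reproduces, group by group, the six blocks of terms displayed in \eqref{RELATED CONNECTION}.

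The main obstacle is organizational rather than conceptual: the non-constancy of $\kappa,\mu,\nu$ forces the appearance of the gradient contributions $X\!\left(\tfrac{1}{\sqrt{1-\kappa}}\right)\varphi^{2}Y$, its symmetric partner in $Y$, and above all the last term $\frac{1}{1-\kappa}\,g(X,\varphi hY)\,\varphi h\,\mathrm{grad}\!\left(\tfrac{1}{\sqrt{1-\kappa}}\right)$, which arises precisely when the $\tilde{g}$-raising operator $P$ is applied to the scalar-derivative part of $\nabla\tilde{g}$. Keeping these $d\kappa$-terms cleanly separated from the $(\kappa,\mu)$-type terms --- which is exactly what distinguishes the present computation from the constant-coefficient case treated in \cite{MOTE} --- is the delicate bookkeeping step, and care is needed so that the $\xi$-components and the $\mathcal{D}$-components are assembled with the correct coefficients.
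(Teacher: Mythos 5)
Your proposal is correct and takes essentially the same route as the paper: the paper's entire proof is the ``long but straightforward calculation as in \cite{MOTE}'', i.e.\ the standard Koszul/difference-tensor computation relating the Levi-Civita connections of $g$ and $\tilde{g}$, which is precisely what you set up (the identity $2\tilde{g}(D(X,Y),Z)=(\nabla_{X}\tilde{g})(Y,Z)+(\nabla_{Y}\tilde{g})(X,Z)-(\nabla_{Z}\tilde{g})(X,Y)$, the three contributions to $\nabla\tilde{g}$, and index-raising via $\tilde{g}=g(P\cdot,\cdot)$). Your observations that $P=\frac{1}{\sqrt{1-\kappa}}\varphi h+\eta\otimes\xi$ is a $g$-symmetric involution by \eqref{HAR1}, and that the non-constancy of $\kappa,\mu,\nu$ is exactly what produces the gradient terms absent from the constant-coefficient case of \cite{MOTE}, are faithful refinements of that same computation rather than a different method.
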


Now we will give a relation between $\tilde{h}$ and $h$ by following lemma.

\begin{lemma}
\label{HB}Let $(M,\varphi ,\xi ,\eta ,g)$ be a non-Sasakian $(\kappa ,\mu
,\nu )$-contact metric manifold and let $(\tilde{\varphi},\xi ,\eta ,\tilde{g%
})$ be the canonical paracontact metric structure induced on $M$, according
to Theorem \ref{kmuvu}. Then we have%
\begin{equation}
\tilde{h}=\frac{1}{2\sqrt{1-\kappa }}((2-\mu )\varphi \circ h+2(1-\kappa
)\varphi ).  \label{H BAR2}
\end{equation}
\end{lemma}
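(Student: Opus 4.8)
The plan is to compute $\tilde{h}$ straight from its definition $\tilde{h}=\frac{1}{2}\mathcal{L}_{\xi}\tilde{\varphi}$, substituting the expression $\tilde{\varphi}=\frac{1}{\sqrt{1-\kappa}}h$ furnished by Theorem \ref{kmuvu}. Since $\frac{1}{\sqrt{1-\kappa}}$ is a function on $M$, I would first apply the Leibniz rule for the Lie derivative of a scalar multiple of a $(1,1)$-tensor field,
\begin{equation*}
\mathcal{L}_{\xi}\tilde{\varphi}=\mathcal{L}_{\xi}\left(\frac{1}{\sqrt{1-\kappa}}h\right)=\xi\left(\frac{1}{\sqrt{1-\kappa}}\right)h+\frac{1}{\sqrt{1-\kappa}}\mathcal{L}_{\xi}h,
\end{equation*}
reducing the problem to the scalar derivative $\xi\left(\frac{1}{\sqrt{1-\kappa}}\right)$ and the tensorial Lie derivative $\mathcal{L}_{\xi}h$.

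For the tensorial term I would use the (metric-independent) identity $(\mathcal{L}_{\xi}h)Y=(\nabla_{\xi}h)Y-\nabla_{hY}\xi+h\nabla_{Y}\xi$, with $\nabla$ the Levi-Civita connection of $g$. Inserting the standard contact metric relation $\nabla\xi=-\varphi-\varphi h$ (equivalently \eqref{3.3}) together with $h\varphi=-\varphi h$, the relation $h^{2}=(\kappa-1)\varphi^{2}$ from \eqref{HAR1}, and $\varphi^{3}=-\varphi$, the two connection terms collapse to $-\nabla_{hY}\xi+h\nabla_{Y}\xi=2\varphi hY+2(1-\kappa)\varphi Y$. Substituting $\nabla_{\xi}h=\mu h\varphi+\nu h$ from \eqref{HAR3a} and rewriting $\mu h\varphi=-\mu\varphi h$ then yields
\begin{equation*}
\mathcal{L}_{\xi}h=(2-\mu)\varphi h+\nu h+2(1-\kappa)\varphi.
\end{equation*}

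For the scalar factor I would differentiate and invoke \eqref{HAR3}: from $\xi(\kappa)=2\nu(\kappa-1)$ one gets $\xi\left(\frac{1}{\sqrt{1-\kappa}}\right)=\frac{\xi(\kappa)}{2(1-\kappa)^{3/2}}=-\frac{\nu}{\sqrt{1-\kappa}}$, so that the contribution $\xi\left(\frac{1}{\sqrt{1-\kappa}}\right)h=-\frac{\nu}{\sqrt{1-\kappa}}h$ cancels precisely the $\frac{1}{\sqrt{1-\kappa}}\nu h$ term arising from $\mathcal{L}_{\xi}h$. Assembling the surviving pieces gives $\tilde{h}=\frac{1}{2\sqrt{1-\kappa}}\left((2-\mu)\varphi\circ h+2(1-\kappa)\varphi\right)$, which is the claimed identity.

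The main obstacle I expect is the bookkeeping inside $\mathcal{L}_{\xi}h$: the operator $h$ is not parallel, and the connection terms must be simplified by repeated use of $h\varphi=-\varphi h$, $h^{2}=(\kappa-1)\varphi^{2}$ and $\varphi^{2}=-I+\eta\otimes\xi$. The second, more conceptual, subtlety is that $\kappa$ and $\nu$ are genuinely non-constant here, so the scalar-derivative term cannot be dropped; the cancellation of the two $\nu h$ contributions, one produced by $\xi\left(\frac{1}{\sqrt{1-\kappa}}\right)$ and one by $\nabla_{\xi}h$ through \eqref{HAR3a}, is exactly what renders the final formula free of $\nu$, and checking this cancellation is the crux of the argument. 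As a cross-check, the whole computation could instead be run in the local $\varphi$-basis $\{e,\varphi e,\xi\}$ of Lemma \ref{CALVARUSO}, verifying the identity on each basis vector and thereby bypassing the invariant Lie-derivative formula.
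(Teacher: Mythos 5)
Your proposal is correct and follows essentially the same route as the paper: both expand $\mathcal{L}_{\xi}\bigl(\tfrac{1}{\sqrt{1-\kappa}}h\bigr)$ by the Leibniz rule, use $\xi(\kappa)=2\nu(\kappa-1)$ to produce the $-\tfrac{\nu}{\sqrt{1-\kappa}}h$ term that cancels the $\nu h$ coming from $\nabla_{\xi}h=\mu h\varphi+\nu h$, and then simplify with $\nabla\xi=-\varphi-\varphi h$, $h\varphi=-\varphi h$ and $h^{2}=(\kappa-1)\varphi^{2}$. The only cosmetic difference is that you compute $\mathcal{L}_{\xi}h$ directly from the torsion-free identity $(\mathcal{L}_{\xi}h)Y=(\nabla_{\xi}h)Y-\nabla_{hY}\xi+h\nabla_{Y}\xi$, whereas the paper writes it as $\tfrac{1}{2}\mathcal{L}_{\xi}(\mathcal{L}_{\xi}\varphi)$ and cites an external computation, so your version is in fact more self-contained.
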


\begin{proof}
By help the equations (\ref{HAR3}), (\ref{PARA k<-1}) and the definitons of $%
\tilde{h}$ and $h$ we have%
\begin{eqnarray}
2\tilde{h} &=&L_{\xi }\tilde{\varphi}=L_{\xi }(\frac{1}{\sqrt{1-\kappa }}h) 
\notag \\
&=&-\frac{\nu }{\sqrt{1-\kappa }}h+\frac{1}{2\sqrt{1-\kappa }}L_{\xi
}(L_{\xi }\varphi ).  \label{HAN1}
\end{eqnarray}%
Using the identities $\nabla \xi =-\varphi -\varphi h,\nabla _{\xi }\varphi
=0$ and $\varphi ^{2}h=-h,$ and following same procedure cf. [\cite{MOTE}
pg. 270], we get%
\begin{equation*}
2\tilde{h}=-\frac{\nu }{\sqrt{1-\kappa }}h+\frac{1}{2\sqrt{1-\kappa }}%
(2\nabla _{\xi }h+4h^{2}\varphi -4h\varphi ).
\end{equation*}%
By using (\ref{HAR1}) and (\ref{HAR3a}), we obtain the claimed relation.
\end{proof}

Let $(M,\varphi ,\xi ,\eta ,g)$ be a non-Sasakian $(\kappa ,\mu ,\nu )$%
-contact metric manifold. Choosing a local orthonormal $\varphi $-basis $%
\{e,\varphi e,\xi \}$ on $(M,\varphi ,\xi ,\eta ,g)$ and using Proposition %
\ref{RConnection}, Lemma \ref{HB} and Lemma \ref{CALVARUSO}, one can give
following proposition.

\begin{proposition}
\label{HB2}Let $(M,\varphi ,\xi ,\eta ,g)$ be a non-Sasakian $(\kappa ,\mu
,\nu )$-contact metric manifold and let $(\tilde{\varphi},\xi ,\eta ,\tilde{g%
})$ be the canonical paracontact metric structure induced on $M$, according
to Theorem \ref{kmuvu}. Then we have%
\begin{eqnarray*}
i)\text{ }\tilde{\nabla}_{e}e &=&-\frac{1}{2\lambda }e(\lambda )e+(\lambda
+1-\frac{\mu }{2})\xi \text{ \ \ }ii)\text{ }\tilde{\nabla}_{e}\varphi e=%
\frac{1}{2\lambda }e(\lambda )\varphi e+\xi ,\text{ \ }iii)\text{ }\tilde{%
\nabla}_{e}\xi =-e+(\frac{\mu }{2}-1-\lambda )\varphi e, \\
iv)\text{ }\tilde{\nabla}_{\varphi e}e &=&\frac{1}{2\lambda }(\varphi
e)(\lambda )e-\xi ,\text{ \ }v)\text{ }\tilde{\nabla}_{\varphi e}\varphi e=-%
\frac{1}{2\lambda }(\varphi e)(\lambda )\varphi e+(\lambda -1+\frac{\mu }{2}%
)\xi ,\text{ \ } \\
vi)\text{ }\tilde{\nabla}_{\varphi e}\xi &=&-e-(\lambda -1+\frac{\mu }{2}%
)\varphi e,\text{ \ \ } \\
vii)\text{ }\tilde{\nabla}_{\xi }e &=&-e,\text{ \ }viii)\text{ }\tilde{\nabla%
}_{\xi }\varphi e=\varphi e,\text{ \ \ } \\
ix)\text{ }[e,\xi ] &=&(\frac{\mu }{2}-1-\lambda )\varphi e,\text{\ }x)\text{
}[\varphi e,\xi ]=-e-(\lambda +\frac{\mu }{2})\varphi e,\text{ \ }xi)\text{ }%
[e,\varphi e]=-\frac{1}{2\lambda }(\varphi e)(\lambda )e+\frac{1}{2\lambda }%
e(\lambda )\varphi e+2\xi \text{.}
\end{eqnarray*}

Moreover, $\tilde{g}(e,e)=\tilde{g}(\varphi e,\varphi e)=\tilde{g}(\varphi
e,\xi )=0$ and $\tilde{g}(e,\varphi e)=\tilde{g}(\xi ,\xi )=1$.
\end{proposition}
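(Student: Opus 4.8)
The plan is to determine $\tilde\nabla_A B$ for every pair $A,B$ in the $\varphi$-basis $\{e,\varphi e,\xi\}$ by substituting the known connection, tensor and curvature data into the comparison formula \eqref{RELATED CONNECTION} of Proposition \ref{RConnection}, after first disposing of the metric normalizations. From $\tilde g=\tfrac{1}{\sqrt{1-\kappa}}\,d\eta(\cdot,h\cdot)+\eta\otimes\eta$ in \eqref{PARA k<-1}, the contact identity $d\eta(X,Y)=g(X,\varphi Y)$, the eigenrelations $he=\lambda e$, $h\varphi e=-\lambda\varphi e$ with $\lambda=\sqrt{1-\kappa}$, and the $g$-orthonormality of $\{e,\varphi e,\xi\}$, one first gets $\varphi he=\lambda\varphi e$ and $\varphi h\varphi e=\lambda e$, and then reads off $\tilde g(e,e)=\tilde g(\varphi e,\varphi e)=\tilde g(e,\xi)=\tilde g(\varphi e,\xi)=0$ and $\tilde g(e,\varphi e)=\tilde g(\xi,\xi)=1$. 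The decisive structural point for what follows is that all of these frame inner products are constant.

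Next I would record three identities that collapse the coefficient functions of Lemma \ref{CALVARUSO}. Comparing the two expressions \eqref{3.4} and \eqref{HAR3a} for $\nabla_\xi h$ on the basis forces $a=-\tfrac{\mu}{2}$ and recovers $\xi(\lambda)=\nu\lambda$, consistent with \eqref{HAR3}. Moreover the $(\kappa,\mu,\nu)$-curvature condition gives $R(e,\varphi e)\xi=0$, from which a short trace yields $S(\xi,e)=S(\xi,\varphi e)=0$; thus $\xi$ is a Ricci eigenvector of $g$ and the functions $A,B$ in \eqref{3.5}--\eqref{3.6} vanish, leaving $b=\tfrac{1}{2\lambda}(\varphi e)(\lambda)$ and $c=\tfrac{1}{2\lambda}e(\lambda)$. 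After these substitutions every coefficient in \eqref{3.1}--\eqref{3.3} is an expression in $\lambda,\mu$ and the directional derivatives $e(\lambda),(\varphi e)(\lambda)$.

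The covariant derivatives of $\xi$ come fastest from the paracontact identity \eqref{nablaxi}, $\tilde\nabla\xi=-\tilde\varphi+\tilde\varphi\tilde h$, using $\tilde\varphi=\tfrac1\lambda h$ and the explicit $\tilde h$ of \eqref{H BAR2}: evaluating $\tilde he$ and $\tilde h\varphi e$ on the basis produces $\tilde\nabla_e\xi$, $\tilde\nabla_{\varphi e}\xi$ and $\tilde\nabla_\xi\xi=0$. For the six remaining derivatives I would feed $\nabla_A B$ from \eqref{3.1}--\eqref{3.2} together with $\tilde h$ into \eqref{RELATED CONNECTION}; here the many $\eta(\cdot)$-weighted terms vanish whenever $A,B\in\{e,\varphi e\}$ because $\eta(e)=\eta(\varphi e)=0$, which prunes the formula substantially. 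The Lie brackets then follow either as $[A,B]=\tilde\nabla_AB-\tilde\nabla_BA$ or, independently, straight from \eqref{3.1}--\eqref{3.3}, giving a built-in consistency check.

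The main obstacle is entirely the size and delicacy of the bookkeeping in \eqref{RELATED CONNECTION}. Because $\kappa$, and hence $\lambda$, is not constant, the gradient terms carrying $X(\tfrac{1}{\sqrt{1-\kappa}})$ and $\tfrac{1}{1-\kappa}g(X,\varphi hY)\,\varphi h\,\mathrm{grad}(\tfrac{1}{\sqrt{1-\kappa}})$ genuinely survive, and they are exactly what generate the $\tfrac{1}{2\lambda}e(\lambda)$ and $\tfrac{1}{2\lambda}(\varphi e)(\lambda)$ coefficients in the final formulas; tracking which directional derivative of $\lambda$ each term contributes, and verifying that the $\nu h$-pieces reorganize into the stated $\mu$-dependent coefficients, is where a sign or index slip is most likely. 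As a safeguard I would check, on each computed $\tilde\nabla_A B$, both torsion-freeness against the Lie brackets and metric compatibility, the latter reducing here to the clean relation $\tilde g(\tilde\nabla_A B,C)=-\tilde g(B,\tilde\nabla_A C)$ since $A\,\tilde g(B,C)=0$ for all frame triples. Indeed this compatibility is so rigid that I would be tempted to bypass \eqref{RELATED CONNECTION} and run the Koszul formula instead, which degenerates precisely because the frame inner products are constant to $2\tilde g(\tilde\nabla_A B,C)=\tilde g([A,B],C)-\tilde g([A,C],B)-\tilde g([B,C],A)$, recovering all of (i)--(viii) from the brackets (ix)--(xi) alone.
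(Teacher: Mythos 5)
Your strategy is essentially the paper's own: the proposition is obtained there precisely by feeding Lemma \ref{CALVARUSO} and Lemma \ref{HB} into the comparison formula \eqref{RELATED CONNECTION} of Proposition \ref{RConnection}, and your preliminary reductions (the constant $\tilde{g}$-frame products, $a=-\frac{\mu}{2}$ from matching \eqref{3.4} against \eqref{HAR3a}, and $A=B=0$ from $R(e,\varphi e)\xi =0$ via \eqref{THREE DIM CURVATURE}) are all correct and are exactly the normalizations that make the substitution tractable. Your two shortcuts are genuine improvements rather than departures: getting $\tilde{\nabla}_{e}\xi$ and $\tilde{\nabla}_{\varphi e}\xi$ straight from \eqref{nablaxi} with $\tilde{\varphi}=\frac{1}{\lambda}h$ and \eqref{H BAR2}, and, better, noting that since all frame inner products are constant the degenerate Koszul formula recovers (i)--(viii) from the brackets alone, while the brackets are metric-independent and drop out of Lemma \ref{CALVARUSO} with no reference to \eqref{RELATED CONNECTION} and its gradient terms at all.

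Be aware, however, that a faithful execution of your plan will \emph{not} reproduce items (vi) and (x) as printed, because those two items are in error. The Lie bracket does not depend on the metric, and Lemma \ref{CALVARUSO} gives $[\varphi e,\xi ]=\nabla _{\varphi e}\xi -\nabla _{\xi }\varphi e=(1-\lambda +a)e=(1-\lambda -\frac{\mu }{2})e$, which is always proportional to $e$; hence the printed $x)$ $[\varphi e,\xi ]=-e-(\lambda +\frac{\mu }{2})\varphi e$ is impossible. Correspondingly your \eqref{nablaxi}-computation (or the Koszul route) yields $\tilde{\nabla}_{\varphi e}\xi =\varphi e-(\lambda +\frac{\mu }{2}-1)e$, not the printed $vi)$. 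You can cross-check this against the paper itself: Proposition \ref{HB3}, items (iii), (vi), (ix), (x), rewritten through $\tilde{e}_{1}=(e-\varphi e)/\sqrt{2}$, $\tilde{\varphi}\tilde{e}_{1}=(e+\varphi e)/\sqrt{2}$, give exactly these corrected formulas, and they are consistent with every other item of the present proposition, whereas the printed (vi) and (x) are not. So your method is sound --- indeed your own proposed consistency check, comparing brackets computed from \eqref{3.1}--\eqref{3.3} with differences of the $\tilde{\nabla}$'s, is precisely what exposes the misprint --- but your write-up must state the corrected (vi) and (x) rather than promise the printed ones.
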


We assume that $(M,\varphi ,\xi ,\eta ,g)$ be a non-Sasakian $(\kappa ,\mu
,\nu )$-contact metric manifold, $(\tilde{\varphi},\xi ,\eta ,\tilde{g})$ be
the canonical paracontact metric structure induced on $M$, according to
Theorem \ref{kmuvu}. Let \ $\{e,\varphi e,\xi \}$ be an orthonormal $\varphi 
$-basis in a neighborhood of $p\in M.$ Then one can always construct an
orthonormal $\tilde{\varphi}$-basis $\{\tilde{e}_{1},\tilde{\varphi}\tilde{e}%
_{1}=\tilde{e}_{2},\xi \}$, for instance $\tilde{e}_{1}=(e-\varphi e)/\sqrt{2%
},$ $\tilde{\varphi}\tilde{e}_{1}=(e+\varphi e)/\sqrt{2},$ such that $\tilde{%
g}(\tilde{e}_{1},\tilde{e}_{1})=-1,$ $\tilde{g}(\tilde{\varphi}\tilde{e}_{1},%
\tilde{\varphi}\tilde{e}_{1})=1,$ $\tilde{g}(\xi ,\xi )=1$. Moreover, from
Lemma \ref{HB}, the matrix form of $\tilde{h}$ is given by 
\begin{equation}
\tilde{h}=\left( 
\begin{array}{ccc}
-1+\frac{\mu }{2} & -\lambda & 0 \\ 
\lambda & 1-\frac{\mu }{2} & 0 \\ 
0 & 0 & 0%
\end{array}%
\right)  \label{HSAPKA}
\end{equation}%
with respect to local orthonormal basis $\{\tilde{e}_{1},\tilde{\varphi}%
\tilde{e}_{1},\xi \}$. By using Proposition \ref{HB2} \ we have following
proposition.

\begin{proposition}
\label{HB3}Let $(M,\varphi ,\xi ,\eta ,g)$ be a non-Sasakian $(\kappa ,\mu
,\nu )$-contact metric manifold and let $(\tilde{\varphi},\xi ,\eta ,\tilde{g%
})$ be the canonical paracontact metric structure induced on $M$, according
to Theorem \ref{kmuvu}. Then we have%
\begin{eqnarray}
i)\text{ }\tilde{\nabla}_{\tilde{e}_{1}}\tilde{e}_{1} &=&-\frac{1}{2\lambda }%
(\tilde{\varphi}\tilde{e}_{1})(\lambda )\tilde{\varphi}\tilde{e}_{1}+\lambda
\xi ,\text{ \ \ }ii)\text{ }\tilde{\nabla}_{\tilde{e}_{1}}\tilde{\varphi}%
\tilde{e}_{1}=-\frac{1}{2\lambda }(\tilde{\varphi}\tilde{e}_{1})(\lambda )%
\tilde{e}_{1}+(2-\frac{\mu }{2})\xi ,\text{ }  \notag \\
iii)\text{ }\tilde{\nabla}_{\tilde{e}_{1}}\xi &=&\lambda \tilde{e}_{1}+(%
\frac{\mu }{2}-2)\tilde{\varphi}\tilde{e}_{1},\text{ \ }iv)\text{ }\tilde{%
\nabla}_{\tilde{\varphi}\tilde{e}_{1}}\tilde{e}_{1}=-\frac{1}{2\lambda }%
\tilde{e}_{1}(\lambda )\tilde{\varphi}\tilde{e}_{1}-\frac{\mu }{2}\xi , 
\notag \\
v)\text{ }\tilde{\nabla}_{\tilde{\varphi}\tilde{e}_{1}}\tilde{\varphi}\tilde{%
e}_{1} &=&-\frac{1}{2\lambda }\tilde{e}_{1}(\lambda )\tilde{e}_{1}+\lambda
\xi ,\text{ \ \ }vi)\text{ }\tilde{\nabla}_{\tilde{\varphi}\tilde{e}_{1}}\xi
=-\frac{\mu }{2}\tilde{e}_{1}-\lambda \tilde{\varphi}\tilde{e}_{1},  \notag
\\
vii)\text{ }\tilde{\nabla}_{\xi }\tilde{e}_{1} &=&-\tilde{\varphi}\tilde{e}%
_{1},\text{ \ \ }viii)\text{ }\tilde{\nabla}_{\xi }\tilde{\varphi}\tilde{e}%
_{1}=-\tilde{e}_{1},\text{ \ }  \notag \\
ix)\text{ }[\tilde{e}_{1},\xi ] &=&\lambda \tilde{e}_{1}+(\frac{\mu }{2}-1)%
\tilde{\varphi}\tilde{e}_{1}\text{ \ }x)\text{ }[\tilde{\varphi}\tilde{e}%
_{1},\xi ]=(1-\frac{\mu }{2})\tilde{e}_{1}-\lambda \tilde{\varphi}\tilde{e}%
_{1},\text{ }  \notag \\
xi)\text{ }[\tilde{e}_{1},\tilde{\varphi}\tilde{e}_{1}] &=&-\frac{1}{%
2\lambda }(\tilde{\varphi}\tilde{e}_{1})(\lambda )\tilde{e}_{1}+\frac{1}{%
2\lambda }\tilde{e}_{1}(\lambda )\tilde{\varphi}\tilde{e}_{1}+2\xi .
\label{IM00}
\end{eqnarray}
\end{proposition}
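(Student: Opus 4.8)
The plan is to derive all eleven identities by transporting the already-established Proposition \ref{HB2}, which records $\tilde{\nabla}$ in the orthonormal $\varphi$-basis $\{e,\varphi e,\xi\}$, into the $\tilde{\varphi}$-basis $\{\tilde{e}_1,\tilde{\varphi}\tilde{e}_1,\xi\}$. The bridge between the frames is $\tilde{e}_1=\tfrac{1}{\sqrt2}(e-\varphi e)$, $\tilde{\varphi}\tilde{e}_1=\tfrac{1}{\sqrt2}(e+\varphi e)$, with inverse $e=\tfrac{1}{\sqrt2}(\tilde{e}_1+\tilde{\varphi}\tilde{e}_1)$ and $\varphi e=\tfrac{1}{\sqrt2}(\tilde{\varphi}\tilde{e}_1-\tilde{e}_1)$. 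Because these coefficients are constant, $\tilde{\nabla}$ is bilinear over them; for example $\tilde{\nabla}_{\tilde{e}_1}\tilde{e}_1=\tfrac12\bigl(\tilde{\nabla}_e e-\tilde{\nabla}_e\varphi e-\tilde{\nabla}_{\varphi e}e+\tilde{\nabla}_{\varphi e}\varphi e\bigr)$. I would also fix the companion dictionary for directional derivatives, $\sqrt2\,\tilde{e}_1(\lambda)=e(\lambda)-(\varphi e)(\lambda)$ and $\sqrt2\,(\tilde{\varphi}\tilde{e}_1)(\lambda)=e(\lambda)+(\varphi e)(\lambda)$, since the terms $\tfrac{1}{2\lambda}e(\lambda)$ occurring in Proposition \ref{HB2} must be re-expressed in the new frame.

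First I would establish the four derivatives $\tilde{\nabla}_{\tilde{e}_1}\tilde{e}_1$, $\tilde{\nabla}_{\tilde{e}_1}\tilde{\varphi}\tilde{e}_1$, $\tilde{\nabla}_{\tilde{\varphi}\tilde{e}_1}\tilde{e}_1$, $\tilde{\nabla}_{\tilde{\varphi}\tilde{e}_1}\tilde{\varphi}\tilde{e}_1$ of $i),ii),iv),v)$ by substituting the six horizontal formulas of Proposition \ref{HB2} into the appropriate bilinear combination, then separating the $e$-, $\varphi e$- and $\xi$-components and converting the first two back through the inverse dictionary. In each case the two $\tfrac{1}{2\lambda}$-weighted terms merge into a single $\tilde{e}_1(\lambda)$- or $(\tilde{\varphi}\tilde{e}_1)(\lambda)$-term, while the $\xi$-components collapse to a pure constant ($\lambda$, $2-\tfrac{\mu}{2}$, $-\tfrac{\mu}{2}$, $\lambda$ respectively). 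The two vertical derivatives $vii),viii)$ follow at once from $\tilde{\nabla}_\xi e=-e$ and $\tilde{\nabla}_\xi\varphi e=\varphi e$ by linearity.

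For the two $\xi$-derivatives in $iii)$ and $vi)$ the cleanest route is not the frame change but the intrinsic identity (\ref{nablaxi}), $\tilde{\nabla}_X\xi=-\tilde{\varphi}X+\tilde{\varphi}\tilde{h}X$, used together with the matrix form (\ref{HSAPKA}) of $\tilde{h}$ and $\tilde{\varphi}^2=I-\eta\otimes\xi$. Reading $\tilde{h}\tilde{e}_1=(\tfrac{\mu}{2}-1)\tilde{e}_1+\lambda\tilde{\varphi}\tilde{e}_1$ and $\tilde{h}\tilde{\varphi}\tilde{e}_1=-\lambda\tilde{e}_1+(1-\tfrac{\mu}{2})\tilde{\varphi}\tilde{e}_1$ off (\ref{HSAPKA}), applying $\tilde{\varphi}$, and subtracting $\tilde{\varphi}\tilde{e}_1$ (resp. $\tilde{e}_1$) yields $iii)$ and $vi)$ directly. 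Finally, the three Lie brackets $ix),x),xi)$ are immediate from torsion-freeness, $[X,Y]=\tilde{\nabla}_XY-\tilde{\nabla}_YX$, applied to the connection components just obtained.

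The argument carries no conceptual content; the only real obstacle is bookkeeping. The delicate point is the sign discipline forced by the change of signature: the $\varphi$-basis is Riemannian with $g(e,e)=1$, whereas the $\tilde{\varphi}$-basis is Lorentzian with $\tilde{g}(\tilde{e}_1,\tilde{e}_1)=-1$, so the symmetric and antisymmetric recombinations $e\pm\varphi e$ and the transported $\lambda$-derivatives must be tracked carefully, and a single misplaced sign propagates through several identities. I would therefore cross-check each output against (\ref{nablaxi}) for the $\xi$-terms and against metric compatibility $\tilde{g}(\tilde{\nabla}_XY,Z)+\tilde{g}(Y,\tilde{\nabla}_XZ)=0$ (the frame metric values being constant), which every one of the eleven formulas must satisfy.
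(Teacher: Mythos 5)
Your proposal is correct, and in outline it is what the paper itself does: the paper's entire justification for Proposition \ref{HB3} is the sentence ``By using Proposition \ref{HB2} we have following proposition'', i.e.\ exactly the constant-coefficient frame change you describe, with the Lie brackets then read off from torsion-freeness. However, your one deviation --- obtaining $iii)$ and $vi)$ from the intrinsic identity (\ref{nablaxi}) together with the matrix (\ref{HSAPKA}), rather than by transporting Proposition \ref{HB2} --- is more than a matter of taste, and it is worth recording why. Formula $vi)$ of Proposition \ref{HB2} as printed, $\tilde{\nabla}_{\varphi e}\xi =-e-(\lambda -1+\frac{\mu }{2})\varphi e$, is a misprint: it violates the compatibility check you yourself propose, since $\tilde{g}(\tilde{\nabla}_{\varphi e}\varphi e,\xi )+\tilde{g}(\varphi e,\tilde{\nabla}_{\varphi e}\xi )=(\lambda -1+\frac{\mu }{2})-1=\lambda +\frac{\mu }{2}-2\neq 0$ in general, whereas the value forced by (\ref{nablaxi}) and Lemma \ref{HB} is $\tilde{\nabla}_{\varphi e}\xi =(1-\frac{\mu }{2}-\lambda )e+\varphi e$. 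Had you transported the printed $vi)$ mechanically, you would have obtained $\tilde{\nabla}_{\tilde{e}_{1}}\xi =(1-\frac{\mu }{2})\tilde{e}_{1}+(\frac{\mu }{2}-1)\tilde{\varphi}\tilde{e}_{1}$, contradicting $iii)$ of the statement; with the corrected value the frame change reproduces $iii)$, $vi)$ and $x)$ exactly as stated. The remaining items are untouched by this: $i)$, $ii)$, $iv)$, $v)$ follow from parts $i)$--$v)$ of Proposition \ref{HB2} precisely as you compute (e.g.\ the $\xi$-components $\frac{1}{2}[(\lambda +1-\frac{\mu }{2})-1+1+(\lambda -1+\frac{\mu }{2})]=\lambda$ for $i)$, and $\frac{1}{2}[(\lambda+1-\frac{\mu}{2})-1-1-(\lambda-1+\frac{\mu}{2})]=-\frac{\mu}{2}$ for $iv)$), $vii)$, $viii)$ are immediate, and the brackets follow from torsion-freeness applied to the components of Proposition \ref{HB3} itself. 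So your hybrid route both proves the proposition as stated and, via the cross-checks against (\ref{nablaxi}) and metric compatibility, detects and repairs the inconsistency in the paper's own Proposition \ref{HB2}.
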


\begin{proposition}
\label{HB4}Let $(M,\varphi ,\xi ,\eta ,g)$ be a non-Sasakian $(\kappa ,\mu
,\nu =const.)$-contact metric manifold with $\xi (I_{M})=0$ and suppose that 
$(\tilde{\varphi},\xi ,\eta ,\tilde{g})$ be the canonical paracontact metric
structure induced on $M.$ Then the following equation holds%
\begin{equation}
\tilde{\nabla}_{\xi }\tilde{h}=2\tilde{h}\tilde{\varphi}+\nu \tilde{h}\text{.%
}  \label{IM1}
\end{equation}
\end{proposition}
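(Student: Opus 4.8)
The plan is to verify the claimed operator identity componentwise on the local orthonormal $\tilde{\varphi}$-basis $\{\tilde{e}_1,\tilde{\varphi}\tilde{e}_1,\xi\}$ in which $\tilde{h}$ is given by the matrix \eqref{HSAPKA} and the connection coefficients are listed in Proposition \ref{HB3}. The first step is to convert the hypothesis $\xi(I_M)=0$ into a pointwise relation among the structure functions. Writing the Boeckx invariant \eqref{Boeckx invariant} as $I_M=(1-\tfrac{\mu}{2})/\sqrt{1-\kappa}=(1-\tfrac{\mu}{2})/\lambda$ and using $\xi(\lambda)=\nu\lambda$ from \eqref{HAR3}, a direct differentiation shows that $\xi(I_M)=0$ is equivalent to the scalar identity $\xi(\mu)=\nu(\mu-2)$. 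This single relation is the engine of the whole computation.

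Next I would compute the left-hand side through the defining formula $(\tilde{\nabla}_\xi\tilde{h})X=\tilde{\nabla}_\xi(\tilde{h}X)-\tilde{h}\,\tilde{\nabla}_\xi X$ on each basis vector. On $\xi$ it vanishes, since $\tilde{h}\xi=0$ and $\tilde{\nabla}_\xi\xi=-\tilde{\varphi}\xi+\tilde{\varphi}\tilde{h}\xi=0$ by \eqref{nablaxi}. On $\tilde{e}_1$ and $\tilde{\varphi}\tilde{e}_1$ I expand $\tilde{h}\tilde{e}_1=(-1+\tfrac{\mu}{2})\tilde{e}_1+\lambda\,\tilde{\varphi}\tilde{e}_1$ and $\tilde{h}\tilde{\varphi}\tilde{e}_1=-\lambda\,\tilde{e}_1+(1-\tfrac{\mu}{2})\tilde{\varphi}\tilde{e}_1$ from \eqref{HSAPKA}, apply the Leibniz rule, and substitute $\tilde{\nabla}_\xi\tilde{e}_1=-\tilde{\varphi}\tilde{e}_1$ and $\tilde{\nabla}_\xi\tilde{\varphi}\tilde{e}_1=-\tilde{e}_1$ from parts $vii)$--$viii)$ of Proposition \ref{HB3}. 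The derivatives of the scalar coefficients are handled by $\xi(\lambda)=\nu\lambda$ and by $\xi(\mu)=\nu(\mu-2)$, the latter being exactly the translated hypothesis; this produces an explicit expansion of $(\tilde{\nabla}_\xi\tilde{h})\tilde{e}_1$ and $(\tilde{\nabla}_\xi\tilde{h})\tilde{\varphi}\tilde{e}_1$ in the basis $\{\tilde{e}_1,\tilde{\varphi}\tilde{e}_1\}$.

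Finally I would compute the right-hand side $2\tilde{h}\tilde{\varphi}+\nu\tilde{h}$ on the same three vectors, using $\tilde{\varphi}(\tilde{\varphi}\tilde{e}_1)=\tilde{\varphi}^2\tilde{e}_1=\tilde{e}_1$ (as $\tilde{e}_1\in\ker\eta$) and $\tilde{\varphi}\xi=0$ together with \eqref{HSAPKA}, and then compare the coefficients of $\tilde{e}_1$ and $\tilde{\varphi}\tilde{e}_1$ with those found for the left-hand side; agreement on each basis vector establishes the identity. I expect the main obstacle to be purely one of bookkeeping: several terms carry competing signs coming from the Lorentzian normalization $\tilde{g}(\tilde{e}_1,\tilde{e}_1)=-1$ and from the off-diagonal $\pm\lambda$ entries of $\tilde{h}$, and the decisive cancellations occur only after the substitution $\xi(\mu)=\nu(\mu-2)$. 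Without the assumption $\xi(I_M)=0$ a residual term proportional to $\bigl(\xi(\mu)-\nu(\mu-2)\bigr)$ would survive and the identity would fail, so confirming that this is precisely the combination that drops out is the crux of the argument.
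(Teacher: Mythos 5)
Your proposal is correct and follows essentially the same route as the paper: the paper's proof is exactly the one-line instruction to use $\xi(\mu)=\nu(\mu-2)$ and $\xi(\lambda)=\nu\lambda$ together with the matrix form (\ref{HSAPKA}) of $\tilde{h}$ and the covariant derivatives (\ref{IM00}) of Proposition \ref{HB3}, which is precisely the componentwise computation you outline. Your additional step of deriving $\xi(\mu)=\nu(\mu-2)$ by differentiating $I_{M}=(1-\tfrac{\mu}{2})/\lambda$ along $\xi$ is a correct (and welcome) filling-in of a detail the paper leaves implicit.
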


\begin{proof}
Taking $\xi (\mu )=\nu (\mu -2)$ and $\xi (\lambda )=\nu \lambda $ into
account and using the relations (\ref{HSAPKA}), (\ref{IM00}) we get
requested relation.
\end{proof}

We suppose that $(M,\varphi ,\xi ,\eta ,g)$ be a non-Sasakian $(\kappa ,\mu
,\nu )$-contact metric manifold with $\xi (I_{M})=0$. Let $(\tilde{\varphi}%
,\xi ,\eta ,\tilde{g})$ be the canonical paracontact metric structure
induced on $M$, according to Theorem \ref{kmuvu}. By (\ref{CURVATURE 4}) and
(\ref{IM00}), and after very long computations we obtain that%
\begin{equation}
\tilde{R}(\tilde{e}_{1},\tilde{\varphi}\tilde{e}_{1})\xi =(\frac{1}{\lambda }%
(\frac{\mu }{2}-1)\tilde{e}_{1}(\lambda )-\frac{1}{2}\tilde{e}_{1}(\mu ))%
\tilde{e}_{1}+(\frac{1}{\lambda }(\frac{\mu }{2}-1)(\tilde{\varphi}\tilde{e}%
_{1})(\lambda )-\frac{1}{2}(\tilde{\varphi}\tilde{e}_{1})(\mu ))\tilde{%
\varphi}\tilde{e}_{1}.  \label{RM1}
\end{equation}%
By using Theorem \ref{ZETAMU} and $\tilde{e}_{1}=(e-\varphi e)/\sqrt{2},%
\tilde{\varphi}\tilde{e}_{1}=(e+\varphi e)/\sqrt{2}$ in (\ref{RM1}) one can
check that in fact $\tilde{R}(\tilde{e}_{1},\tilde{\varphi}\tilde{e}_{1})\xi
=0$. Now if we use (\ref{THREE DIM CURVATURE}), we have%
\begin{equation}
\tilde{R}(\tilde{e}_{1},\tilde{\varphi}\tilde{e}_{1})\xi =-\tilde{\sigma}(%
\tilde{e}_{1})\tilde{\varphi}\tilde{e}_{1}+\tilde{\sigma}(\tilde{\varphi}%
\tilde{e}_{1})\tilde{e}_{1}.  \label{RM2}
\end{equation}%
Comparing $\tilde{R}(\tilde{e}_{1},\tilde{\varphi}\tilde{e}_{1})\xi =0$ with
(\ref{RM2}), we get%
\begin{equation}
\tilde{\sigma}(\tilde{e}_{1})=\tilde{\sigma}(\tilde{\varphi}\tilde{e}_{1})=0%
\text{.}  \label{RM3}
\end{equation}%
So $\xi $ is an eigenvector of Ricci operator $\tilde{Q}.$

\begin{remark}
\label{remark3}From (\ref{NAMBLASTAR}) and (\ref{IM00}) we have%
\begin{eqnarray}
\tilde{\nabla}^{\ast }\tilde{\nabla}\xi &=&-\tilde{\nabla}_{\tilde{e}_{1}}%
\tilde{\nabla}_{\tilde{e}_{1}}\xi +\tilde{\nabla}_{\tilde{\nabla}_{\tilde{e}%
_{1}}\tilde{e}_{1}}\xi +\tilde{\nabla}_{\tilde{\varphi}\tilde{e}_{1}}\tilde{%
\nabla}_{\tilde{\varphi}\tilde{e}_{1}}\xi -\nabla _{\nabla _{\tilde{\varphi}%
\tilde{e}_{1}}\tilde{\varphi}\tilde{e}_{1}}\xi  \notag \\
&=&(\frac{1}{\lambda }(\frac{\mu }{2}-1)(\tilde{\varphi}\tilde{e}%
_{1})(\lambda )-\frac{1}{2}(\tilde{\varphi}\tilde{e}_{1})(\mu ))\tilde{e}_{1}
\notag \\
&&+(\frac{1}{\lambda }(\frac{\mu }{2}-1)\tilde{e}_{1}(\lambda )-\frac{1}{2}%
\tilde{e}_{1}(\mu ))\tilde{\varphi}\tilde{e}_{1}  \label{IM2} \\
&&+((2-\frac{\mu }{2})^{2}+\frac{\mu ^{2}}{4}-2\lambda ^{2})\xi \text{.} 
\notag
\end{eqnarray}%
Then by using again Theorem \ref{ZETAMU} and $\tilde{e}_{1}=(e-\varphi e)/%
\sqrt{2},\tilde{\varphi}\tilde{e}_{1}=(e+\varphi e)/\sqrt{2}$ in (\ref{IM2})
\ and after long computations one can prove that the last relation reduces
to 
\begin{equation}
\tilde{\nabla}^{\ast }\tilde{\nabla}\xi =2\xi .  \label{IM3}
\end{equation}%
By virtue of (\ref{IM3}), we see immediately that $\xi $ is harmonic vector
field on $(M,\tilde{\varphi},\xi ,\eta ,\tilde{g}).$
\end{remark}

So we can give following theorem:

\begin{theorem}
\label{HPARACONTACT}Let $(M,\varphi ,\xi ,\eta ,g)$ be a non-Sasakian $%
(\kappa ,\mu ,\nu =const)$-contact metric manifold with $\xi (I_{M})=0$ and
let $(\tilde{\varphi},\xi ,\eta ,\tilde{g})$ be the canonical paracontact
metric structure induced on $M$, according to Theorem \ref{kmuvu}. Then the
characteristic vector field $\xi $ is an eigenvector of the Ricci operator $%
. $
\end{theorem}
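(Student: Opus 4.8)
The plan is to prove that the characteristic vector field $\xi$ of the induced paracontact structure is a harmonic vector field, since this is precisely what it means for $(M,\tilde{\varphi},\xi,\eta,\tilde{g})$ to be $H$-paracontact. Because the manifold is three-dimensional, I can use either of two equivalent criteria: the collinearity condition \eqref{NAMBLASTAR4}, that $\tilde{\nabla}^{\ast}\tilde{\nabla}\xi$ be collinear to $\xi$, or, by Theorem \ref{DESTAR}, the condition that $\xi$ be an eigenvector of the Ricci operator $\tilde{Q}$. I would take the rough-Laplacian route as primary.

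First I would fix the orthonormal $\tilde{\varphi}$-basis $\{\tilde{e}_{1},\tilde{\varphi}\tilde{e}_{1},\xi\}$ obtained from a local $\varphi$-basis $\{e,\varphi e,\xi\}$ by $\tilde{e}_{1}=(e-\varphi e)/\sqrt{2}$ and $\tilde{\varphi}\tilde{e}_{1}=(e+\varphi e)/\sqrt{2}$, for which $\tilde{h}$ has the explicit form \eqref{HSAPKA}. With the full table of Levi-Civita derivatives from Proposition \ref{HB3} in hand, I would substitute into the rough Laplacian \eqref{NAMBLASTAR} to reach the expression \eqref{IM2} for $\tilde{\nabla}^{\ast}\tilde{\nabla}\xi$, whose $\tilde{e}_{1}$- and $\tilde{\varphi}\tilde{e}_{1}$-components involve the directional derivatives $\tilde{e}_{1}(\lambda)$, $(\tilde{\varphi}\tilde{e}_{1})(\lambda)$, $\tilde{e}_{1}(\mu)$, $(\tilde{\varphi}\tilde{e}_{1})(\mu)$.

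The decisive step is to evaluate these tangential coefficients using the local classification of Theorem \ref{ZETAMU}: in the chart provided there, $\kappa$ and $\mu$ depend only on $x,z$, one has $\lambda=r(z)e^{\nu x}$, and $\mu=2(1\pm\sqrt{1-\kappa})$. Rewriting $\tilde{e}_{1},\tilde{\varphi}\tilde{e}_{1}$ through $e,\varphi e$ into the coordinate frame, one checks that the two tangential coefficients cancel, so that $\tilde{\nabla}^{\ast}\tilde{\nabla}\xi$ reduces to a multiple of $\xi$. The scalar factor is $(2-\frac{\mu}{2})^{2}+\frac{\mu^{2}}{4}-2\lambda^{2}$; writing $\frac{\mu}{2}=1\pm\lambda$ with $\lambda=\sqrt{1-\kappa}$ collapses this to $2$, giving $\tilde{\nabla}^{\ast}\tilde{\nabla}\xi=2\xi$ as in \eqref{IM3}. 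By the criterion \eqref{NAMBLASTAR4}, $\xi$ is then harmonic, so $M$ is $H$-paracontact. Equivalently, the same classification forces $\tilde{R}(\tilde{e}_{1},\tilde{\varphi}\tilde{e}_{1})\xi=0$ in \eqref{RM1}; comparison with \eqref{RM2} yields $\tilde{\sigma}(\tilde{e}_{1})=\tilde{\sigma}(\tilde{\varphi}\tilde{e}_{1})=0$, i.e. $\xi$ is a Ricci eigenvector, and Theorem \ref{DESTAR} finishes.

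I expect the main obstacle to be computational rather than conceptual: the careful bookkeeping needed to express the $\varphi$-basis vectors in the explicit coordinates of Theorem \ref{ZETAMU}, differentiate $\lambda$ and $\mu$ along them, and verify that the tangential parts of \eqref{IM2} vanish identically. Once those cancellations are confirmed, everything else is assembled directly from Propositions \ref{HB3} and \ref{HB4} together with Remark \ref{remark3}, and the theorem follows immediately.
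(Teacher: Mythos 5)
Your proposal is correct and follows essentially the same route as the paper: the paper's proof of Theorem \ref{HPARACONTACT} is precisely the combination of the curvature computation \eqref{RM1}--\eqref{RM3} (showing $\tilde{R}(\tilde{e}_{1},\tilde{\varphi}\tilde{e}_{1})\xi=0$ via Theorem \ref{ZETAMU}, hence $\tilde{\sigma}=0$ and $\xi$ is a Ricci eigenvector) together with the rough-Laplacian computation of Remark \ref{remark3} giving $\tilde{\nabla}^{\ast}\tilde{\nabla}\xi=2\xi$, both of which you reproduce. Your explicit simplification $(2-\frac{\mu}{2})^{2}+\frac{\mu^{2}}{4}-2\lambda^{2}=2$ using $\frac{\mu}{2}=1\pm\lambda$ from Theorem \ref{ZETAMU} is a correct and welcome filling-in of the paper's ``after long computations.''
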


By using previous theorem and same procedure as in Case 3 of the proof of
Theorem \ref{k mu vu}, we obtain following theorem.

\begin{theorem}
\label{HPARACONTACT K MU VU}Let $(M,\varphi ,\xi ,\eta ,g)$ be a
non-Sasakian $(\kappa ,\mu ,\nu =const)$-contact metric manifold with $\xi
(I_{M})=0$ and let $(\tilde{\varphi},\xi ,\eta ,\tilde{g})$ be the canonical
paracontact metric structure induced on $M$, according to Theorem \ref{kmuvu}%
. Then the curvature tensor field of the Levi Civita connection of $(M,%
\tilde{g})$ verifies the following relation%
\begin{equation*}
\tilde{R}(X,Y)\xi =(\kappa -2)(\eta (Y)X-\eta (X)Y)+2(\eta (Y)\tilde{h}%
X-\eta (X)\tilde{h}Y)-\nu (\eta (Y)\tilde{\varphi}\tilde{h}X-\eta (X)\tilde{%
\varphi}\tilde{h}Y).
\end{equation*}
\end{theorem}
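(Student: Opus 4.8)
The plan is to transplant the Case~3 argument from the proof of Theorem~\ref{k mu vu} onto the induced structure $(\tilde\varphi,\xi,\eta,\tilde g)$, feeding in the two facts special to this situation: that $\xi$ is harmonic, and the explicit formula for $\tilde\nabla_\xi\tilde h$. First I would record, via Theorem~\ref{HPARACONTACT}, that $(M,\tilde\varphi,\xi,\eta,\tilde g)$ is $H$-paracontact, so $\xi$ is a harmonic vector field; by Theorem~\ref{DESTAR} this forces $\xi$ to be an eigenvector of the Ricci operator $\tilde Q$. Equivalently $\tilde\sigma(\tilde e_1)=\tilde\sigma(\tilde\varphi\tilde e_1)=0$, as already isolated in \eqref{RM3}, so that $\tilde Q\xi=\tilde S(\xi,\xi)\xi$. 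This vanishing of $\tilde\sigma$ is exactly what lets the three-dimensional Ricci formula collapse, as in Case~3. Since for the structure at hand $\tilde h$ is of $\mathfrak h_3$ type (its matrix \eqref{HSAPKA} is, after the reduction $a=\tfrac\mu2-1$, the canonical form \eqref{A3}), the relevant Ricci expression is that of Lemma~\ref{le3}.

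Next I would compute $\tilde Q$. Starting from \eqref{iremmm7} and discarding the $\tilde\sigma$-terms, $\tilde Q$ reduces to $\bar a\,I+\bar b\,\eta\otimes\xi-\tilde\varphi(\tilde\nabla_\xi\tilde h)$. The new ingredient, in place of the abstract $\tilde\nabla_\xi\tilde h$, is Proposition~\ref{HB4}, namely $\tilde\nabla_\xi\tilde h=2\tilde h\tilde\varphi+\nu\tilde h$ from \eqref{IM1}. Using the anticommutation $\tilde\varphi\tilde h=-\tilde h\tilde\varphi$ together with $\tilde\varphi^2=I-\eta\otimes\xi$ and $\tilde h\xi=0$, one gets $\tilde\varphi\tilde h\tilde\varphi=-\tilde h$, hence $-\tilde\varphi(\tilde\nabla_\xi\tilde h)=2\tilde h-\nu\tilde\varphi\tilde h$, and therefore $\tilde Q=\bar a\,I+\bar b\,\eta\otimes\xi+2\tilde h-\nu\tilde\varphi\tilde h$. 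The reading $\tilde\mu=2$, $\tilde\nu=-\nu$ that will emerge is moreover consistent with the structural identity \eqref{NMBLA ZETAH}.

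Finally I would substitute this $\tilde Q$ into the three-dimensional curvature identity \eqref{THREE DIM CURVATURE} with $Z=\xi$, just as \eqref{iremmm13} is obtained in Case~3. Writing $\tilde g(Y,\xi)=\eta(Y)$ and $\tilde g(\tilde QY,\xi)=\tilde S(\xi,\xi)\eta(Y)$, the $\bar b$-term drops out, the $2\tilde h$ and $-\nu\tilde\varphi\tilde h$ parts of $\tilde Q$ reproduce precisely the $\tilde h$- and $\tilde\varphi\tilde h$-terms of \eqref{PARAKMU} with coefficients $2$ and $-\nu$, and the remaining scalar part collects into the coefficient $\bar a+\tilde S(\xi,\xi)-\tfrac r2$ of $\eta(Y)X-\eta(X)Y$. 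To evaluate this coefficient I would compute $\tilde S(\xi,\xi)$ from \eqref{RICCI ZETA} and the explicit matrix \eqref{HSAPKA}, i.e. $\tilde S(\xi,\xi)=-2+\operatorname{tr}\tilde h^{2}$; combined with the hypotheses this gives $\tilde S(\xi,\xi)=2(\kappa-2)$, reducing the coefficient to $\kappa-2$ and yielding the asserted identity.

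The hard part will be the constant bookkeeping in this last step. One must verify that the three appearances of the scalar curvature $r$ — inside $\bar a$, inside $\bar b$, and in the $-\tfrac r2$ term of \eqref{THREE DIM CURVATURE} — cancel, and, crucially, that the surviving coefficient collapses to exactly $\kappa-2$ rather than to $\tilde S(\xi,\xi)/2$ carrying leftover curvature; this is precisely where the hypotheses $\nu=\mathrm{const}$ and $\xi(I_M)=0$, through Theorem~\ref{ZETAMU} and the matrix \eqref{HSAPKA}, must be invoked to pin down $\operatorname{tr}\tilde h^{2}$. The remaining delicate point is purely algebraic: the identity $\tilde\varphi\tilde h\tilde\varphi=-\tilde h$ on $\ker\eta$ has to be applied consistently so that the coefficients $2$ and $-\nu$ come out with the correct signs.
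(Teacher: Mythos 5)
Your proposal follows the same route as the paper: the paper's entire proof is the remark that Theorem~\ref{HPARACONTACT} combined with ``the same procedure as in Case 3 of the proof of Theorem~\ref{k mu vu}'' yields the statement, and your three steps ($H$-paracontactness forcing $\tilde{\sigma}=0$, the Ricci decomposition with $\tilde{\nabla}_{\xi}\tilde{h}$ supplied by Proposition~\ref{HB4}, then \eqref{THREE DIM CURVATURE} with $Z=\xi$) are a faithful expansion of that remark, and they do produce the coefficients $\tilde{\mu}=2$, $\tilde{\nu}=-\nu$. The genuine gap sits exactly at the step you deferred as ``constant bookkeeping'': the scalar coefficient does \emph{not} evaluate to $\kappa-2$. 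The pipeline always delivers $\tilde{\kappa}=\tfrac{1}{2}\tilde{S}(\xi,\xi)$, and by \eqref{RICCI ZETA} together with \eqref{HSAPKA} one has $\tilde{h}^{2}=\bigl((1-\tfrac{\mu}{2})^{2}-(1-\kappa)\bigr)\tilde{\varphi}^{2}$, hence $\tfrac{1}{2}\tilde{S}(\xi,\xi)=(1-\tfrac{\mu}{2})^{2}+\kappa-2$, which equals $\kappa-2$ only when $\mu=2$. But under the stated hypotheses (with $\nu\neq 0$, the case Theorem~\ref{ZETAMU} addresses) one has $\mu=2(1\pm\sqrt{1-\kappa})$, i.e. $(1-\tfrac{\mu}{2})^{2}=1-\kappa\neq 0$. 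Consequently $\tilde{h}^{2}=0$ while $\tilde{h}\neq 0$: the operator $\tilde{h}$ is nilpotent, of canonical form (II) ($\mathfrak{h}_{2}$ type), \emph{not} of $\mathfrak{h}_{3}$ type as you assert, so Lemma~\ref{le3} is not the applicable lemma (Lemma~\ref{p4} is); moreover $\mathrm{tr}\,\tilde{h}^{2}=0$, so $\tilde{S}(\xi,\xi)=-2$ and the procedure proves
\begin{equation*}
\tilde{R}(X,Y)\xi=-(\eta(Y)X-\eta(X)Y)+2(\eta(Y)\tilde{h}X-\eta(X)\tilde{h}Y)-\nu(\eta(Y)\tilde{\varphi}\tilde{h}X-\eta(X)\tilde{\varphi}\tilde{h}Y),
\end{equation*}
that is $\tilde{\kappa}=-1$, not $\kappa-2$.

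Two cross-checks show this is an obstruction rather than a repairable slip in arithmetic. First, computing $\tilde{R}(\tilde{e}_{1},\xi)\xi$ directly from Proposition~\ref{HB3} gives the $\tilde{e}_{1}$-component $-2-\lambda^{2}+\tfrac{\mu^{2}}{4}-\nu\lambda$; under Theorem~\ref{ZETAMU} this agrees with the $\tilde{\kappa}=-1$ formula, and it agrees with the $\kappa-2$ formula only if $\kappa=1$, the excluded Sasakian case. Second, any relation of type \eqref{PARAKMU} implies $\tilde{h}^{2}=(1+\tilde{\kappa})\tilde{\varphi}^{2}$ by \eqref{H2}, so $\tilde{h}^{2}=0$ pins $\tilde{\kappa}=-1$ independently of everything else. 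To be fair, this defect is inherited from the paper itself: its one-line proof makes the same jump, and the value $\kappa-2$ (together with the ``Case 3'' framework) is correct precisely in the situation $\mu\equiv 2$ of Example~\ref{ex3}, where $I_{M}\equiv 0$ and $\nu$ is non-constant — a situation incompatible, via Theorem~\ref{ZETAMU}, with the hypothesis $\nu=\mathrm{const}\neq 0$. So you reconstructed the intended argument faithfully, but the step you flagged as the hard part genuinely cannot be closed to the conclusion as stated.
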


\textbf{Acknowledgement: }The authors wish to thank to G. Calvaruso, D.
Perrone and Q. Pan for useful comments on the manuscript.

\end{document}